\documentclass[11pt]{article}

\usepackage[text={17.1cm,24.6cm},centering]{geometry} 
\usepackage[utf8]{inputenc}
\usepackage{authblk}
\usepackage{blkarray}
\usepackage{cite}

\usepackage{amsfonts,amsmath,amssymb,amsthm}
\usepackage{latexsym,mathrsfs,mathtools,bm}
\usepackage{graphicx,subcaption,epsfig,caption,float,xcolor}
\usepackage{enumitem}
\usepackage{chngcntr}

\usepackage{tikz}
\usetikzlibrary{calc}

\usepackage{bookmark}


\newtheorem{theo}{Theorem}[section]

\newtheorem*{theo*}{Theorem}

\newtheorem{prop}[theo]{Proposition}

\newtheorem{defi}[theo]{Definition}
\newtheorem{example}[theo]{Example}

\newtheorem{remax}[theo]{Remark}

\theoremstyle{definition}

\newtheorem*{term*}{Notation/Terminology}

\newcommand{\cR}{\mathscr{R}}
\newcommand{\cS}{\mathscr{S}}

\def\rr{{\boldsymbol{\rho}}}
\def\orr{\overline{{\boldsymbol{\rho}}}}
\def\ox{{\overline{x}}}
\def\ogamma{{\overline{\gamma}}}

\def\tA{{{A}}}
\def\tB{{{Y}}}
\def\tC{{{C}}}

\def\tX{{X}}

\def\oalpha{\overline{{\alpha}}}
\def\obeta{\overline{{\beta}}}
\def\oN{\overline{{N}}}

\newcommand{\rouge}[1]{\textcolor{red}{#1}}

\usepackage{geometry}
 \geometry{
 a4paper,
 total={210mm,295mm},
 left=20mm,
 right=20mm,
 top=20mm,
 bottom=30mm,
 footskip=10mm
 }

\numberwithin{equation}{section}

\usepackage{ulem}
\usepackage{xcolor}

\newcommand\redsout{\bgroup\markoverwith{\textcolor{red}{\rule[0.5ex]{2pt}{0.4pt}}}\ULon}

\title{\bf Contiguity relations for finite families of\\ orthogonal polynomials in the Askey scheme}
\renewcommand*{\Affilfont}{\normalsize\small}

\author[1,a]{Nicolas Cramp\'e\,}
\author[2,b]{Lucia Morey\,}
\author[2,3,c]{Luc Vinet\,}
\author[4,d]{Meri Zaimi\,\vspace{.5em}}
\affil[1]{\textit{CNRS -- Universit\'e de Montr\'eal CRM - CNRS, Montr\'eal (Qu\'ebec), H3C 3J7, Canada}\vspace{.9em}}
\affil[2]{\textit{Centre de Recherches Math\'ematiques, Universit\'e de Montr\'eal, P.O. Box 6128,
\newline\vspace{.9em}
 Centre-ville Station, Montr\'eal (Qu\'ebec), H3C 3J7, Canada}}
\affil[3]{\textit{Insitut de Valorisation des Donn\'ees (IVADO), Montr\'eal (Qu\'ebec), H2S 3H1, Canada \vspace{.9em}}}
\affil[4]{\textit{Perimeter Institute for Theoretical Physics, Waterloo (Ontario), N2L 2Y5, Canada. \newline\vspace{.9em}}}

{
 \makeatletter
 \renewcommand\AB@affilsepx{: \protect\Affilfont}
 \makeatother
 \makeatletter
 \renewcommand\AB@affilsepx{, \protect\Affilfont}
 \makeatother
 \affil[a]{crampe1977@gmail.com}
 \affil[b]{lucia.morey@umontreal.ca}
  \affil[c]{luc.vinet@umontreal.ca}
   \affil[d]{mzaimi@perimeterinstitute.ca}
}
\date{\today}

\begin{document}

\maketitle

\bigskip\bigskip 

\begin{center}
\begin{minipage}{14cm}
\begin{center}
{\bf Abstract} 
\\
\end{center}
This paper classifies the contiguity relations for finite families of polynomials within the ($q$-)Askey scheme. The necessary and sufficient conditions for the existence of these contiguity relations are presented first. These conditions are then solved, yielding a comprehensive list of contiguity relations for these various families of polynomials. Furthermore, we demonstrate that all contiguity relations correspond to spectral transforms.
    \end{minipage}
\end{center}

\medskip

\begin{center}
\begin{minipage}{13cm}
\textbf{Keywords: Spectral transforms, orthogonal polynomials, Askey scheme, recurrence relations.}

\textbf{MSC2020 database:} 33C80; 33C45; 16G60
\end{minipage}
\end{center}

\clearpage
\newpage

\section{Introduction}

Hypergeometric orthogonal polynomials and their $q$-analogs \cite{AskeyWilson, GasperRahman2004, Chiara,KoekoekLeskyetal2010 } form an important class of special functions. This class comprises various families of polynomials which are organized in the  ($q$-)Askey scheme and satisfy a three-term recurrence relation in view of Favard's theorem. They are bispectral and even called classical since they also verify a second-order
differential or a three-term ($q$-)difference equation. It is of interest to study in detail the properties of special functions to facilitate their use in various ways and contexts. 

In this paper, we restrict ourselves to the finite families of polynomials of the ($q$-)Askey scheme, the most general being the ($q$-)Racah polynomials. 
We denote by $R_i(x; \rr)$ ($i=0,1,\dots,N$ and $N$ is a non-negative integer) the polynomials of such a family. These polynomials are of degree $i$ with respect to a variable $\lambda_{x,\rr}$. We denote the list of parameters of these polynomials by $\rr=\rho_1,\rho_2,\dots$, which includes the integer $N$.
The polynomials $R_i(x,\rr)$ satisfy a three-term recurrence relation of the following type 
\begin{align}\label{eq:recuR}
& \lambda_{x,\rr}\; R_i(x; \rr)=A_{i,\rr}\; R_{i+1}(x; \rr)
-(A_{i,\rr}+C_{i,\rr})\;R_i(x; \rr)+C_{i,\rr}\; R_{i-1}(x; \rr)\,,
\end{align}
with $C_{0,\rr}=0$, $A_{N,\rr}=0$ and $R_0(x,\rr)=R_i(0,\rr)=1$.

The purpose of this paper is to classify the contiguity relations of the polynomials $R_i(x; \rr)$, that is, equations of the form
\begin{align}
\lambda^{+}_{x; \rr}\; R_i(x; \rr)&=\sum_{\epsilon \in \cS} \Phi^{\epsilon,+}_{i}\ R_{i+\epsilon}(\ox; \orr)\,,\label{eq:cons1}
\end{align}
where $\ox$ is a shift of $x$: $\ox=x+\eta$ with $\eta\in\{0,+1,-1\}$, $\orr$ the ensemble of modified parameters and $\cS$ a set of one of the following types:
\begin{itemize}
    \item type ${A_2}$ is   $\cS=\{0,-1\}$;
    \item type ${B_2}$ is  $\cS=\{0,-1,1\}$;
    \item type ${B'_2}$ is $\cS=\{0,-1,-2\}$.
\end{itemize} 
The names for these different types of sets come from the use of the associated contiguity relations to compute the recurrence relations for certain bivariate polynomials which are defined with the root systems $A_2$ or $B_2$ (see \cite{IT06,crampe2023FLP,crampe2023lambdagriffiths,crampe2023griffithsRacah,crampe2024griffithsqRacah}). The case $B'_2$ is a transformation of the case obtained from the root system $B_2$. The main result of this paper is a complete classification of these relations in the cases where $\oN$ belonging to $\orr$ is chosen among $\{N,N+1,N-1,N-2,N+2\}$. For some families of polynomials, the classification is achieved without restricting the values of $\oN$, and all the solutions that are found are such that $\oN$ belongs to the set written above.

Contiguity relations are important features of orthogonal polynomials. Indeed, these relations are at the origin of the  ``doubling'' procedure  \cite{MARCELLAN1997,Jafarov2015,Oste2015Doubling,Oste2016}, which allows to introduce new families of ``classical'' orthogonal polynomials such as the big $-1$ Jacobi polynomials \cite{Vinet2010q1Jacobi}.
These constructions are generalizations of the Chihara method for obtaining new orthogonal polynomials from orthogonal polynomials and their kernel partner \cite{Chiara}.
They also appear in the study of new exactly solvable quantum dynamical systems \cite{Stoilova2011,Jafarov2014,Oste2017}, in the theory of the representation of the rank 2 Racah algebra \cite{crampe2023Racah4}, 
in the study of the bispectral properties of some multivariate polynomials \cite{crampe2023lambdagriffiths,crampe2023griffithsRacah,crampe2024griffithsqRacah}, in the construction of factorized Leonard pairs \cite{crampe2023FLP}, and in the analysis of the bivariate $P$- and $Q$-polynomial structures for certain association schemes \cite{crampe2023nonbinaryJohnson, bernard2024attenuated}. 

We show that, for each contiguity relation \eqref{eq:cons1}, there exists the following other contiguity relation
\begin{align}
\lambda^{-}_{x; \rr}\;  R_i(\ox; \orr)&=\sum_{-\epsilon \in \cS} \Phi^{\epsilon,-}_{i\ }\ R_{i+\epsilon}(x; \rr)\,.\label{eq:cons3}
\end{align}
Let us remark that, using the Wilson duality, \textit{i.e.}  the fact that for any polynomials $R_i(x,\rr)$, there exist other polynomials $\cR_{i}(x,\rr')$ such that $\cR_{i}(x,\rr')=R_{x}(i,\rr)$, the contiguity relations of the type \eqref{eq:cons1} and \eqref{eq:cons3} for $\cR_{i}(x,\rr')$ lead to  contiguity  relations of a difference nature for $R_i(x,\rr)$ since the duality connection leads then to shifts in the variable $i$ of $R_{x}(i,\rr)$. Some of these equations already appear in the literature, see for instance \cite{KoekoekLeskyetal2010,Jafarov2014,Stoilova2011}. We note that the case $A_2$ leads to forward and backward shift operators.

One of the results shown in this paper is that the whole list of $B_2$ and $B_2'$ contiguity relations can be obtained from the $A_2$-contiguity relations. Therefore, we provide the list of $A_2$-contiguity relations for all the families of polynomials, and those of type $B_2$ and $B'_2$ for the $q$-Racah polynomials only.

Furthermore, we show that all the $A_2$-contiguity relations \eqref{eq:cons1} are Christoffel transforms \cite{Szego, Chiara}, and the associated relations \eqref{eq:cons3} are Geronimus transforms \cite{Geronimus,Geronimus2}. We may point out that the compatibility between the Christoffel and Geronimus transforms leads to the discrete Toda equations \cite{Zhedanov-dToda,Satoshi-dToda}. The study in this paper can equivalently be presented as the cataloguing of the Christoffel transforms of polynomials of the ($q$-)Askey scheme for which the kernel polynomials are also polynomials of the ($q$-)Askey scheme.

We also treat the case of the Bannai--Ito polynomials \cite{BI84}. We show that there are no $A_2$-contiguity relations for them. However, the limit $q\to -1$ of the $A_2$-contiguity relations suggests looking for relations of the type \eqref{eq:cons1} where the l.h.s. involves Bannai--Ito polynomials and the r.h.s. involves complementary Bannai--Ito polynomials (defined in \cite{VinetCBI,Vinet2013}). We provide a list of such relations as well as of the associated relations of the form \eqref{eq:cons1}. We show that this list suffices to obtain all the $B_2$ and $B'_2$ contiguity relations for the Bannai--Ito polynomials.

Finally, we show that $A_2$-contiguity relations can also exist between terminating generalized ${}_4\phi_3$ $q$-hypergeometric functions which are not necessarily balanced. One recovers contiguity relations for the $q$-Racah polynomials when imposing that the ${}_4\phi_3$ functions are $1$-balanced.

This paper is organized as follows. Section \ref{sec:cont} gives the sufficient and necessary conditions between $\rr$, $\orr$, $x$ and $\ox$ for contiguity relations to exist. The results are presented in different propositions depending on the type $A_2$, $B_2$ or $B'_2$. 
The connections with the Christoffel and Geronimus transforms  are discussed in Section \ref{sec:CG} while Section \ref{sec:cons} is concerned with the resolution of the constraints and provides the complete list of the contiguity relations (of the classes considered) for the various families of polynomials. Section \ref{sec:cons}, together with Appendices \ref{app:B2} and \ref{app:B2p}, can be used independently as a compendium of the different contiguity relations.
The Bannai--Ito polynomials are studied in Section \ref{sec:BI}, where a list of relations between Bannai--Ito and complementary Bannai--Ito polynomials is given. Finally, Section \ref{sec:gen} provides various formulas involving  generalized $q$-hypergeometric functions of the ${}_4\phi_3$ type which are not necessarily balanced.


\section{Contiguity relations\label{sec:cont} }

This section is devoted to providing the sufficient and necessary conditions for the contiguity relations of types $A_2$, $B_2$ and $B'_2$ to exist. We demand that 
\begin{align}
\label{eq:cond-lambda}
\lambda_{x,\rr}=\zeta\lambda_{\ox,\orr}-\xi \,,
\end{align}
for some parameters $\zeta$ and $\xi$. This constraint corresponds to asking that $R_i(x,\rr)$ and $R_i(\ox,\orr)$ are both polynomials of degree $i$ of the same variable.
For convenience, we will use from now on the notations
\begin{equation}\label{eq:XY}
    \tX_{i,\rr}:=\tA_{i-1,\rr}\tC_{i,\rr}\,, \qquad \tB_{i,\rr}:=-(\tA_{i,\rr}+\tC_{i,\rr}), 
\end{equation}
where $\tA_{i,\rr}$ and $\tC_{i,\rr}$ are the coefficients appearing in the recurrence relation \eqref{eq:recuR} for the polynomials $R_i(x;\rr)$.

\subsection{Contiguity relations of type $A_2$ \label{sec:A2crr}}

Contiguity relations \eqref{eq:cons1} and \eqref{eq:cons3} of type $A_2$ are studied in this subsection. We start with a definition of some constraints which will allow the existence of such relations. 

\begin{defi}\label{def:CA2}
The constraints $(\mathfrak{C}_{A_2})$ are defined by the requirement that the following expressions 
\begin{align}
   & \zeta^{2i} \frac{\zeta\tB_{i,\orr}-\tB_{i,\rr}-\xi}{\tX_{i+1,\rr}-\zeta^2\tX_{i,\orr}} 
    \prod_{k=1}^{i}
    \frac{\tX_{k,\orr}}
    {\tX_{k,\rr}} \,,\label{eq:indei1}\\
    &\zeta^{2i}\frac{ \tX_{i+1,\rr}-\zeta^{2}\tX_{i+1,\orr} }
    {(\zeta\tB_{i,\orr}-\tB_{i+1,\rr}-\xi)\tX_{i+1,\rr}}
    \prod_{k=1}^{i} \frac{\tX_{k,\orr}}
    {\tX_{k,\rr}} \,,
    \label{eq:indei2}
\end{align} 
   are equal and independent of $i$ for $i\geq 0$, where the notations \eqref{eq:XY} have been used. 
\end{defi}

\begin{prop}\label{pro:cr1}
The polynomials $R_{i}(x;\rr)$ chosen such that $\lambda_{x,\rr}=\zeta\lambda_{\ox,\orr}-\xi$ satisfy the contiguity relation \eqref{eq:cons1} of type $A_2$
\begin{equation}\label{eq:cons1a}
\lambda^+_{x,\rr}R_{i}(x;\rr)=\Phi_{i}^{0,+}R_{i}(\ox;\orr) +\Phi_{i}^{-1,+}R_{i-1}(\ox;\orr)\,, \qquad i\geq 0\,, 
\end{equation}
(with the convention $\Phi_{0}^{-1,+} = 0$ and with $\Phi_{0}^{0,+} \neq 0$) 
if and only if the coefficients are given (up to a global normalization) by
\begin{align}
& \lambda^+_{x,\rr}=1, \label{eq:lam+}\\
& \Phi_{i}^{0,+}  =\zeta^i
 \prod_{k=0}^{i-1} \frac{\tA_{k,\orr}}{\tA_{k,\rr}}\,, & &i\geq 0, \label{eq:Phii0} \\
&\Phi_{i}^{-1,+}  =\zeta^{-i+1}
 \left(1-\frac{\zeta\tA_{0,\orr}+\xi}{\tA_{0,\rr}}\right)
 \prod_{k=1}^{i-1} \frac{\tC_{k+1,\rr}}{\tC_{k,\orr}}\, , & &i\geq 1, \label{eq:Phii1}
\end{align}
and the constraints $(\mathfrak{C}_{A_2})$ of Definition \ref{def:CA2} are satisfied.
\end{prop}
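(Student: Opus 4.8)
The plan is to substitute the ansatz \eqref{eq:cons1a} into the recurrence relation \eqref{eq:recuR} (rewritten via the notations \eqref{eq:XY}) and extract a recursion for the unknown coefficients $\Phi_i^{0,+}$ and $\Phi_i^{-1,+}$; the contiguity relation then holds if and only if that recursion admits a solution, which is exactly the content of $(\mathfrak{C}_{A_2})$. First I would apply $\lambda_{x,\rr}$ to both sides of \eqref{eq:cons1a}. On the left, using \eqref{eq:recuR} for $R_i(x;\rr)$ (and $\lambda^+_{x,\rr}=1$, to be justified at the end), one produces a combination of $R_{i+1}(x;\rr)$, $R_i(x;\rr)$, $R_{i-1}(x;\rr)$. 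On the right, I would first use the relation $\lambda_{x,\rr}=\zeta\lambda_{\ox,\orr}-\xi$ to replace $\lambda_{x,\rr}$ acting on $R_i(\ox;\orr)$ and $R_{i-1}(\ox;\orr)$ by $\zeta\lambda_{\ox,\orr}-\xi$, and then apply \eqref{eq:recuR} for the polynomials $R_j(\ox;\orr)$ to expand $\lambda_{\ox,\orr}R_i(\ox;\orr)$ and $\lambda_{\ox,\orr}R_{i-1}(\ox;\orr)$ in terms of $R_{i+1}(\ox;\orr),\dots,R_{i-2}(\ox;\orr)$. At this stage both sides are expressed through the $R$'s evaluated at $(x;\rr)$; to compare them I would re-apply \eqref{eq:cons1a} (with $i$ replaced by $i+1$, $i$, $i-1$) to rewrite each $R_{i+\epsilon}(x;\rr)$ on the left in terms of $R_{i+\epsilon}(\ox;\orr)$ and $R_{i+\epsilon-1}(\ox;\orr)$. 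Equivalently, and more cleanly, I would expand everything in the basis $\{R_j(\ox;\orr)\}$ and match coefficients.

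Matching the coefficients of $R_{i+1}(\ox;\orr)$, $R_i(\ox;\orr)$, $R_{i-1}(\ox;\orr)$, $R_{i-2}(\ox;\orr)$ yields four scalar equations for each $i$. Because $R_j(\ox;\orr)$ is a polynomial of degree $j$ in $\lambda_{\ox,\orr}$ and the two families share the same variable by \eqref{eq:cond-lambda}, linear independence lets me treat these coefficient equations as genuine constraints. The coefficient of the top term $R_{i+1}(\ox;\orr)$ gives a two-term recursion linking $\Phi_{i+1}^{0,+}$ to $\Phi_i^{0,+}$ with ratio $\zeta\, \tA_{i,\orr}/\tA_{i,\rr}$, which integrates to \eqref{eq:Phii0} once the normalization $\Phi_0^{0,+}$ is fixed to $1$; similarly the coefficient of the bottom term $R_{i-2}(\ox;\orr)$ gives a two-term recursion for $\Phi_i^{-1,+}$ with ratio involving $\tC_{k+1,\rr}/\tC_{k,\orr}$, integrating to \eqref{eq:Phii1} with the initial value $\Phi_1^{-1,+}=\zeta^0(1-(\zeta\tA_{0,\orr}+\xi)/\tA_{0,\rr})$ coming from the $i=0$ case (where $R_{-1}$ drops out). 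The remaining two coefficient equations — those of $R_i(\ox;\orr)$ and of $R_{i-1}(\ox;\orr)$ — are the compatibility conditions: substituting the explicit $\Phi$'s just obtained, and rewriting the products $\prod \tA_{k,\orr}/\tA_{k,\rr}$ and $\prod \tC_{k+1,\rr}/\tC_{k,\orr}$ in terms of the $\tX$'s via $\tX_{i,\rr}=\tA_{i-1,\rr}\tC_{i,\rr}$, each of these two equations reduces precisely to the statement that one of the expressions \eqref{eq:indei1}, \eqref{eq:indei2} is independent of $i$, and comparing them forces the two expressions to be equal. This establishes the equivalence with $(\mathfrak{C}_{A_2})$.

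A few auxiliary points must be handled. The normalization $\lambda^+_{x,\rr}=1$ in \eqref{eq:lam+}: since $\lambda^+_{x,\rr}$ multiplies a degree-$i$ polynomial on the left while the right side also has degree $i$ (as $\orr$ shares the variable with $\rr$), $\lambda^+_{x,\rr}$ must be a constant in $x$, and it can be absorbed into the global normalization, so we may take it to be $1$; this is the ``up to a global normalization'' clause. The boundary cases need care: at $i=0$ the term $R_{-1}$ is absent (convention $\Phi_0^{-1,+}=0$) and at $i=N$ the coefficient $\tA_{N,\rr}=0$ kills $R_{N+1}$, and one should check that the derived formulas remain consistent there and that $\Phi_0^{0,+}\neq 0$ is compatible. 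I also need the denominators appearing in \eqref{eq:indei1}--\eqref{eq:indei2} to be generically nonzero for the division steps to be legitimate; this is a mild non-degeneracy assumption on the parameters, or else one argues by continuity. The main obstacle I anticipate is purely organizational: carrying out the basis change between $\{R_j(x;\rr)\}$ and $\{R_j(\ox;\orr)\}$ without sign or index errors, and then recognizing — after substituting the closed forms for the $\Phi$'s and converting all $\tA$–$\tC$ products into $\tX$ ratios — that the two surviving coefficient equations are literally \eqref{eq:indei1} and \eqref{eq:indei2}; the bookkeeping with the telescoping products and the shift $k\mapsto k+1$ in the $\tC$ factors is where the real work lies.
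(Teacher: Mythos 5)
Your proposal is correct and follows essentially the same route as the paper: expand both sides in the basis $\{R_j(\ox;\orr)\}$ using the two recurrence relations together with $\lambda_{x,\rr}=\zeta\lambda_{\ox,\orr}-\xi$, match the coefficients of $R_{i+1}(\ox;\orr),\dots,R_{i-2}(\ox;\orr)$ to obtain four scalar constraints, solve the two two-term recursions (top and bottom coefficients) to get \eqref{eq:Phii0}--\eqref{eq:Phii1}, and identify the two remaining compatibility equations with the constraints $(\mathfrak{C}_{A_2})$, with sufficiency following by induction on $i$. The only minor deviation is your degree argument for $\lambda^+_{x,\rr}$ being constant; the paper simply evaluates \eqref{eq:cons1a} at $i=0$, where $R_0(x;\rr)=R_0(\ox;\orr)=1$ gives $\lambda^+_{x,\rr}=\Phi_0^{0,+}$ directly, which is cleaner since $\lambda^+_{x,\rr}$ is not assumed a priori to be polynomial in the variable.
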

\proof
Relation \eqref{eq:cons1a} for $i=0$ together with $R_0(x; \rr)=R_0(\ox; \orr)=1$ implies
\begin{align}\label{eq:temp1}
\lambda^+_{x,\rr}=\Phi_{0}^{0,+}\,.\end{align}

Using the recurrence relation \eqref{eq:recuR} satisfied by $R_i(x,\rr)$, one obtains constraints between the coefficients of the recurrence relation and those of the contiguity relation. Indeed,
\begin{align}
\tA_{i,\rr}\lambda^+_{x,\rr}R_{i+1}(x; \rr) 
=&\lambda^+_{x,\rr}\Big( 
\big(\lambda_{x,\rr}  -\tB_{i,\rr}\big)\; R_i(x; \rr)-\tC_{i,\rr}\; R_{i-1}(x; \rr)\Big)\\
=& 
\big(\zeta\lambda_{\overline{x},\orr} -\xi -\tB_{i,\rr}\big)\;\Big(\Phi^{0,+}_{i} R_i(\ox; \orr)+\Phi^{-1,+}_{i} R_{i-1}(\ox; \orr)  \Big)\\
&\hspace{0.5cm} -\tC_{i,\rr}\; \Big(\Phi^{0,+}_{i-1} R_{i-1}(\ox; \orr)+\Phi^{-1,+}_{i-1} R_{i-2}(\ox; \orr)  \Big)\,.\nonumber
\end{align}
The last line is arrived at using the contiguity relation and condition \eqref{eq:cond-lambda} for $\lambda_{x,\rr}$. It can be transformed using the recurrence relation as follows
\begin{align}
 & \Phi^{0,+}_{i}
\Big(\zeta\tA_{i,\orr}\; R_{i+1}(\ox; \orr)
+(\zeta\tB_{i,\orr}-\tB_{i,\rr}-\xi)\; R_{i}(\ox; \orr)
+\zeta\tC_{i,\orr}\;R_{i-1}(\ox; \orr)\Big)
\\
+&\Phi^{-1,+}_{i} 
\Big(\zeta\tA_{i-1,\orr}\; R_{i}(\ox; \orr)
+(\zeta\tB_{i-1,\orr}-\tB_{i,\rr}-\xi)\; R_{i-1}(\ox; \orr)
+\zeta\tC_{i-1,\orr}\;R_{i-2}(\ox; \orr)\Big)\nonumber\\
-&\tC_{i,\rr}\; \Big(\Phi^{0,+}_{i-1} \;R_{i-1}(\ox; \orr)+\Phi^{-1,+}_{i-1}\; R_{i-2}(\ox; \orr)  \Big)\,.\nonumber   
\end{align}
Comparing with \eqref{eq:cons1a} with $i$ replaced by $i+1$, one obtains the following constraints
\begin{subequations}
\begin{align}
& \tA_{i,\rr}\Phi_{i+1}^{0,+}  =
\zeta\tA_{i,\orr} \Phi^{0,+}_{i} \,, & & i \geq 0 \,,\label{eq:cont1}\\
 & \tA_{i,\rr}\Phi_{i+1}^{-1,+} =
(\zeta\tB_{i,\orr}-\tB_{i,\rr}-\xi)\Phi^{0,+}_{i}+ 
\zeta\tA_{i-1,\orr}\Phi^{-1,+}_{i}  \,, & & i \geq 0 \,, \label{eq:cont2}\\
&0=\zeta\tC_{i,\orr}\Phi^{0,+}_{i}+ 
(\zeta\tB_{i-1,\orr}-\tB_{i,\rr}-\xi)\Phi^{-1,+}_{i}
-\tC_{i,\rr} \Phi^{0,+}_{i-1}\,, & & i \geq 1 \,,\label{eq:cont3}\\
&0= \zeta\tC_{i-1,\orr}\Phi^{-1,+}_{i}
-\tC_{i,\rr}\Phi^{-1,+}_{i-1} \, , & & i \geq 2 \,.\label{eq:cont4}
\end{align}
\end{subequations}
Using recursively  \eqref{eq:cont1} and \eqref{eq:cont4}, one finds
\begin{align}
\Phi_{i}^{0,+}  =
 \Phi_{0}^{0,+}\zeta^i\prod_{k=0}^{i-1} \frac{\tA_{k,\orr}}{\tA_{k,\rr}} \quad (i\geq 0)\,, \qquad 
  \Phi_{i}^{-1,+}  =
 \Phi^{-1,+}_{1}\zeta^{-i+1}
 \prod_{k=1}^{i-1} \frac{\tC_{k+1,\rr}}{\tC_{k,\orr}} \quad (i\geq 1)\,. \label{eq:Phii}
\end{align}
Substituting these expressions of $\Phi_{i}^{0,+}$ and $\Phi_{i}^{-1,+}$ in \eqref{eq:cont2} and \eqref{eq:cont3}, yields
\begin{align}
\frac{\Phi_{1}^{-1,+}}{\Phi_{0}^{0,+} \tC_{1,\rr}}&=\zeta^{2i}
    \frac{\zeta\tB_{i,\orr}-\tB_{i,\rr}-\xi}{\tX_{i+1,\rr}-\zeta^2\tX_{i,\orr}} 
    \prod_{k=0}^{i-1}  \frac{\tX_{k+1,\orr}}
    {\tX_{k+1,\rr}} \,,\quad i\geq 0\label{eq:indei1a}\\
    &=\zeta^{2i-2}\frac{\tX_{i,\rr}-\zeta^{2}\tX_{i,\orr} }
    {(\zeta\tB_{i-1,\orr}-\tB_{i,\rr}-\xi)\tX_{i,\rr}}
    \prod_{k=0}^{i-2}
    \frac{\tX_{k+1,\orr}}
    {\tX_{k+1,\rr}} \,,\quad i\geq 1\,. \label{eq:indei2a}
\end{align}
This leads to the constraints $(\mathfrak{C}_{A_2})$ of Definition \ref{def:CA2}.
Substituting \eqref{eq:indei1a} (with $i=0$) in \eqref{eq:Phii}, one finds
\begin{equation}
\Phi_{i}^{-1,+}  =\Phi_{0}^{0,+}
 \zeta^{-i+1}\left(1-\frac{\zeta\tA_{0,\orr}+\xi}{\tA_{0,\rr}}\right)
 \prod_{k=1}^{i-1} \frac{\tC_{k+1,\rr}}{\tC_{k,\orr}}\, .
\end{equation}
The factor $\Phi_{0}^{0,+}$ is a global normalization which can be fixed to $1$, leading to formulas \eqref{eq:lam+}--\eqref{eq:Phii1}.
This proves the direct implication in the proposition.

By a recursion on $i$, the requirement that the expressions \eqref{eq:indei1} and \eqref{eq:indei2} do not depend on $i$ and are equal, is shown to impose sufficient conditions for $R_i(x,\rr)$ to satisfy the contiguity relation \eqref{eq:cons1a} with the coefficients given by \eqref{eq:Phii0} and \eqref{eq:Phii1}.\endproof

Similar results can be obtained for relation \eqref{eq:cons3}.
\begin{prop}\label{pro:cr2}
The polynomials $R_{i}(x;\rr)$ chosen such that $\lambda_{x,\rr}=\zeta\lambda_{\ox,\orr}-\xi$ satisfy the contiguity relation \eqref{eq:cons3} of type $A_2$
\begin{equation}\label{eq:cons3a}
\lambda^-_{x,\rr} R_{i}(\ox;\orr)=\Phi_{i}^{1,-}R_{i+1}(x;\rr)+ \Phi_{i}^{0,-}R_{i}(x;\rr) \,, \qquad i\geq 0\,, 
\end{equation}
(with $\Phi_{0}^{0,-}\neq 0$) if and only if the coefficients are given  (up to a global normalization) by
\begin{align}
& \lambda^-_{x,\rr}=(\tA_{0,\rr}-\zeta\tA_{0,\orr}-\xi) \left(\frac{\lambda_{x,\rr}}{A_{0,\rr}}+1\right)+\tC_{1,\rr}\,,\\
& \Phi_{i}^{1,-}=(\tA_{0,\rr}-\zeta\tA_{0,\orr}-\xi)\zeta^{-i}\prod_{k=1}^{i} 
    \frac{\tA_{k,\rr}}
    {\tA_{k-1,\orr}}\,, & &i\geq 0 \,, \\
&    \Phi_{i}^{0,-}=\zeta^i\tC_{1,\rr}\prod_{k=1}^{i} 
    \frac{\tC_{k,\orr}}
    {\tC_{k,\rr}}\,, & &i\geq 0\,,
\end{align}
and the constraints $(\mathfrak{C}_{A_2})$ of Definition \ref{def:CA2} are satisfied.
\end{prop}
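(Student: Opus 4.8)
The plan is to mirror the proof of Proposition \ref{pro:cr1}, exploiting the symmetry between the relations \eqref{eq:cons1} and \eqref{eq:cons3}. First I would substitute the ansatz \eqref{eq:cons3a} into the recurrence relation \eqref{eq:recuR} satisfied by $R_{i}(x;\rr)$, using $\lambda_{x,\rr}=\zeta\lambda_{\ox,\orr}-\xi$ to rewrite $\lambda_{\ox,\orr}$ in terms of $\lambda_{x,\rr}$, and then expand $\lambda_{\ox,\orr}R_i(\ox;\orr)$ via the recurrence relation for $R_i(\ox;\orr)$. Matching the coefficients of $R_{i+1}(x;\rr)$, $R_i(x;\rr)$, $R_{i-1}(x;\rr)$ (and the residual term) on both sides should produce a system of four recursions for $\Phi_i^{1,-}$ and $\Phi_i^{0,-}$, structurally analogous to \eqref{eq:cont1}--\eqref{eq:cont4}. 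Two of these (the analogues of \eqref{eq:cont1} and \eqref{eq:cont4}) are telescoping and give the closed products for $\Phi_i^{1,-}$ and $\Phi_i^{0,-}$ displayed in the statement, up to the normalization constants $\Phi_0^{1,-}=(\tA_{0,\rr}-\zeta\tA_{0,\orr}-\xi)$ and $\Phi_0^{0,-}=\tC_{1,\rr}$; the $i=0$ instance of \eqref{eq:cons3a} together with $R_0(x;\rr)=R_0(\ox;\orr)=1$ pins down $\lambda^-_{x,\rr}=\Phi_0^{1,-}\,\lambda_{x,\rr}/\tA_{0,\rr}+\Phi_0^{0,-}+\Phi_0^{1,-}$, which is exactly the stated expression for $\lambda^-_{x,\rr}$ once one recalls $R_1(x;\rr)=1+\lambda_{x,\rr}/\tA_{0,\rr}$ from \eqref{eq:recuR}.

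Second, I would feed the closed-form products back into the remaining two recursions (the analogues of \eqref{eq:cont2} and \eqref{eq:cont3}); each becomes an equation asserting that a certain explicit expression in $i$ built from $\tX_{i,\rr}$, $\tX_{i,\orr}$, $\tB_{i,\rr}$, $\tB_{i,\orr}$, $\zeta$, $\xi$ equals the $i$-independent ratio of normalization constants. Rewriting these with the shorthand \eqref{eq:XY} and shifting indices should make them coincide with the two expressions \eqref{eq:indei1}, \eqref{eq:indei2} in Definition \ref{def:CA2} — i.e.\ the constraints $(\mathfrak{C}_{A_2})$ reappear, as expected, since both \eqref{eq:cons1a} and \eqref{eq:cons3a} are governed by the same underlying compatibility between the two recurrence relations. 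This establishes the forward implication. For the converse, I would argue by induction on $i$ exactly as at the end of the proof of Proposition \ref{pro:cr1}: assuming $(\mathfrak{C}_{A_2})$ holds and taking the $\Phi_i^{1,-}$, $\Phi_i^{0,-}$, $\lambda^-_{x,\rr}$ defined by the stated formulas, one checks that \eqref{eq:cons3a} holds for $i=0$ and that the four recursions propagate it from $i$ to $i+1$.

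I expect the main obstacle to be purely bookkeeping: getting the index shifts and the placement of $\orr$ versus $\rr$ right when expanding $\lambda_{\ox,\orr}R_i(\ox;\orr)$, and then recognizing, after substituting the telescoped products, that the two leftover constraints are genuinely $(\mathfrak{C}_{A_2})$ and not some superficially different pair. A clean way to bypass much of this is to observe that \eqref{eq:cons3a} is obtained from \eqref{eq:cons1a} by swapping the roles of $(x,\rr)\leftrightarrow(\ox,\orr)$ together with $\zeta\mapsto\zeta^{-1}$, $\xi\mapsto -\xi/\zeta$ (which is the inverse of the relation $\lambda_{x,\rr}=\zeta\lambda_{\ox,\orr}-\xi$, namely $\lambda_{\ox,\orr}=\zeta^{-1}\lambda_{x,\rr}+\zeta^{-1}\xi$), so Proposition \ref{pro:cr1} applied to the polynomials $R_i(\ox;\orr)$ directly yields a relation of the form \eqref{eq:cons3a}; one then only has to verify that the resulting constraints $(\mathfrak{C}_{A_2})$ for the swapped data are equivalent to the original $(\mathfrak{C}_{A_2})$ (a symmetry of the system that should follow by a short manipulation of \eqref{eq:indei1}--\eqref{eq:indei2}) and translate the coefficient formulas back into the $(x,\rr)$-normalization quoted in the statement. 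I would present the direct computational proof as the primary argument and mention the swap as the conceptual reason the same constraints recur.
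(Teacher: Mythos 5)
Your primary argument — substituting the ansatz \eqref{eq:cons3a} into the recurrence \eqref{eq:recuR}, deriving the four recursions analogous to \eqref{eq:cont1}--\eqref{eq:cont4}, telescoping two of them, fixing the ratio of the two free constants from the lowest-index constraint, and recognizing the remaining constraints as $(\mathfrak{C}_{A_2})$, with the converse by induction — is exactly the route the paper takes, since its proof of this proposition is simply "similar to the proof of Proposition \ref{pro:cr1}".

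One caution about your proposed shortcut: swapping $(x,\rr)\leftrightarrow(\ox,\orr)$ together with $\zeta\mapsto\zeta^{-1}$, $\xi\mapsto-\xi\zeta^{-1}$ and invoking Proposition \ref{pro:cr1} does \emph{not} directly yield \eqref{eq:cons3a}. The swap turns \eqref{eq:cons1a} into a relation expressing $R_i(\ox;\orr)$ through $R_i(x;\rr)$ and $R_{i-1}(x;\rr)$ (shifts $\{0,-1\}$, with a constant prefactor), whereas \eqref{eq:cons3a} involves $R_{i+1}(x;\rr)$ and $R_i(x;\rr)$ (shifts $\{0,+1\}$) with a prefactor $\lambda^-_{x,\rr}$ that is linear in $\lambda_{x,\rr}$; these are genuinely different relations, of degrees $i$ and $i+1$ respectively. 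The swap symmetry works only when the shift set is symmetric, which is why the paper states it as a remark after Proposition \ref{pro:B2cr2} for type $B_2$ but not for type $A_2$. Since you present the direct computation as your main argument, the proof stands, but the "conceptual bypass" should be dropped or replaced by the correct observation that both \eqref{eq:cons1a} and \eqref{eq:cons3a} encode the compatibility of the same pair of recurrences, which is why the same constraints $(\mathfrak{C}_{A_2})$ appear.
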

\proof Similar to the proof of Proposition \ref{pro:cr1}.
\endproof

Let us emphasize that relations \eqref{eq:cons1a} and \eqref{eq:cons3a} both require the same constraints $(\mathfrak{C}_{A_2})$ in order to be satisfied. In other words, the same constraints $(\mathfrak{C}_{A_2})$ allow the existence of both contiguity relations.

\subsection{Contiguity relations of type $B_2$ \label{sec:B2crr}}

Contiguity relations \eqref{eq:cons1} and \eqref{eq:cons3} of type $B_2$ are studied in this subsection. Similarly to the case $A_2$, both relations will require some constraints to be satisfied.

\begin{defi}\label{def:CB2}
The constraints $(\mathfrak{C}_{B_2})$ are defined by the requirement that the following expressions 
\begin{align}
   & 
 \zeta^{2i}
    \frac{\frac{\zeta^2\tX_{i+1,\orr}-\tX_{i,\rr}}{\xi+\tB_{i,\rr}-\zeta\tB_{i,\orr}}- \frac{\zeta^2\tX_{i+2,\orr}-\tX_{i+1,\rr}}{\xi+\tB_{i+1,\rr}-\zeta\tB_{i+1,\orr}}+\zeta\tB_{i+1,\orr}-\tB_{i,\rr}-\xi}{\frac{\tX_{i+1,\rr}}{\zeta^2\tX_{i+1,\orr}}\frac{\zeta^2\tX_{i+1,\orr}-\tX_{i+2,\rr}}{\xi+\tB_{i+1,\rr}-\zeta\tB_{i+1,\orr}}-\frac{\zeta^2\tX_{i,\orr}-\tX_{i+1,\rr}}{\xi+\tB_{i,\rr}-\zeta\tB_{i,\orr}}}
    \prod_{k=1}^{i}  \frac{\tX_{k,\orr}}
    {\tX_{k,\rr}} \,, 
    \label{eq:B2indei1}\\
    & 
    \zeta^{2i}\frac{\frac{\zeta^2\tX_{i+1,\orr}-\tX_{i,\rr}}{\xi+\tB_{i,\rr}-\zeta\tB_{i,\orr}} - \frac{\zeta^2\tX_{i+1,\orr}}{\tX_{i+1,\rr}}\, \frac{\zeta^2\tX_{i+2,\orr}-\tX_{i+1,\rr}}{\xi+\tB_{i+1,\rr}-\zeta\tB_{i+1,\orr}}}{\frac{\zeta^2\tX_{i+1,\orr}-\tX_{i+2,\rr}}{\xi+\tB_{i+1,\rr}-\zeta\tB_{i+1,\orr}}-\frac{\zeta^2\tX_{i,\orr}-\tX_{i+1,\rr}}{\xi+\tB_{i,\rr}-\zeta\tB_{i,\orr}}+\zeta\tB_{i,\orr}-\tB_{i+1,\rr}-\xi}
    \prod_{k=1}^{i}  \frac{\tX_{k,\orr}}{\tX_{k,\rr}}\,, 
    \label{eq:B2indei2}
\end{align} 
are equal and independent of $i$ for $i \geq 0$, where the notations \eqref{eq:XY} have been used.
\end{defi}

Let us remark that the 
vanishing of the denominators in equations \eqref{eq:B2indei1} and \eqref{eq:B2indei2} corresponds to constraints $(\mathfrak{C}_{A_2})$. 

\begin{prop}\label{pro:B2cr1}
The polynomials $R_{i}(x;\rr)$ chosen such that $\lambda_{x,\rr}=\zeta\lambda_{\ox,\orr}-\xi$ satisfy the contiguity relation of type $B_2$
\begin{equation}\label{eq:B2cons1a}
\lambda^+_{x,\rr} R_{i}(x;\rr)=\Phi_{i}^{1,+}R_{i+1}(\ox;\orr) +\Phi_{i}^{0,+}R_{i}(\ox;\orr)+\Phi_{i}^{-1,+}R_{i-1}(\ox;\orr)\,, \qquad i\geq 0\,, 
\end{equation}
(with the convention $\Phi_{0}^{-1,+} = 0$ and with $\Phi_0^{1,+}\neq 0$) 
if and only if the coefficients are given  (up to a global normalization) by
\begin{align}
&\lambda^+_{x,\rr}=
 1+\frac{\zeta\tC_{1,\orr}}{\xi+\zeta\tA_{0,\orr}-\tA_{0,\rr}}+\frac{\lambda_{x,\rho}+\xi}{\zeta\tA_{0,\orr}}-\frac{\frac{\zeta^2\tX_{1,\orr}}{\xi+\tB_{0,\rr}-\zeta\tB_{0,\orr}}- \frac{\zeta^2\tX_{2,\orr}-\tX_{1,\rr}}{\xi+\tB_{1,\rr}-\zeta\tB_{1,\orr}}+\zeta\tB_{1,\orr}-\tB_{0,\rr}-\xi}{\frac{(\xi+\zeta\tA_{0,\orr}-\tA_{0,\rr})(\zeta^2\tX_{1,\orr}-\tX_{2,\rr})}{\zeta\tC_{1,\orr}(\xi+\tB_{1,\rr}-\zeta\tB_{1,\orr})}+\zeta\tA_{0,\orr}}\,,\label{eq:lamB+}\\
&  \Phi_{i}^{1,+}  = \zeta^i
 \prod_{k=1}^{i} \frac{\tA_{k,\orr}}{\tA_{k-1,\rr}}\,, \qquad i\geq 0\,, \label{eq:B2Phii0} \\
 &\Phi_{i}^{-1,+}  =\zeta^{-i}
 \frac{\frac{\zeta^2\tX_{1,\orr}}{\xi+\tB_{0,\rr}-\zeta\tB_{0,\orr}}- \frac{\zeta^2\tX_{2,\orr}-\tX_{1,\rr}}{\xi+\tB_{1,\rr}-\zeta\tB_{1,\orr}}+\zeta\tB_{1,\orr}-\tB_{0,\rr}-\xi}{\tA_{0,\rr}\tA_{0,\orr}\left(\frac{\zeta^2\tX_{1,\orr}-\tX_{2,\rr}}{\zeta^2\tX_{1,\orr}(\xi+\tB_{1,\rr}-\zeta\tB_{1,\orr})}+\frac{1}{\xi+\tB_{0,\rr}-\zeta\tB_{0,\orr}}\right)} \prod_{k=1}^{i-1} \frac{\tC_{k+1,\rr}}{\tC_{k,\orr}}\, , \quad i\geq 1 \,,\label{eq:B2Phii2} \\
 &\Phi_{i}^{0,+}  =
 \frac{\zeta^2\tX_{i+1,\orr}-\tX_{i,\rr}}{\zeta\tA_{i,\orr}(\xi+\tB_{i,\rr}-\zeta\tB_{i,\orr})}\Phi^{1,+}_{i}
 +\frac{\zeta^2\tX_{i,\orr}-\tX_{i+1,\rr}}{\tC_{i+1,\rr}(\xi+\tB_{i,\rr}-\zeta\tB_{i,\orr})}\Phi^{-1,+}_{i+1} \,, \qquad i\geq 0\,, \label{eq:B2Phii1}
\end{align}
and the constraints $(\mathfrak{C}_{B_2})$ of Definition \ref{def:CB2} are satisfied.
\end{prop}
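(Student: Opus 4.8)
The plan is to follow exactly the strategy used in the proof of Proposition \ref{pro:cr1}, now with three terms on the right-hand side instead of two. First I would substitute the ansatz \eqref{eq:B2cons1a} into the recurrence relation \eqref{eq:recuR} for $R_i(x;\rr)$: multiply \eqref{eq:B2cons1a} by $\tA_{i,\rr}$ and use \eqref{eq:recuR} to replace $\tA_{i,\rr}R_{i+1}(x;\rr)$ by $(\lambda_{x,\rr}-\tB_{i,\rr})R_i(x;\rr)-\tC_{i,\rr}R_{i-1}(x;\rr)$, then apply \eqref{eq:B2cons1a} again on the right and use \eqref{eq:cond-lambda} to turn $\lambda_{x,\rr}$ into $\zeta\lambda_{\ox,\orr}-\xi$. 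Expanding $\lambda_{\ox,\orr}R_j(\ox;\orr)$ with the recurrence relation for $R_j(\ox;\orr)$, everything is reexpressed in the basis $\{R_j(\ox;\orr)\}$. Comparing coefficients of $R_{i+2},R_{i+1},R_i,R_{i-1},R_{i-2}$ on both sides (after shifting $i\to i+1$ in \eqref{eq:B2cons1a}) yields a system of five recurrence relations in $i$ linking $\Phi_i^{1,+}$, $\Phi_i^{0,+}$, $\Phi_i^{-1,+}$.

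Second, I would solve this system. The extreme relations — coefficient of $R_{i+2}(\ox;\orr)$ and coefficient of $R_{i-2}(\ox;\orr)$ — are two-term relations involving only $\Phi_i^{1,+}$ and only $\Phi_i^{-1,+}$ respectively; solving them recursively gives the product formulas \eqref{eq:B2Phii0} and \eqref{eq:B2Phii2} up to the overall constants $\Phi_0^{1,+}$ and $\Phi_1^{-1,+}$, just as \eqref{eq:cont1} and \eqref{eq:cont4} gave \eqref{eq:Phii} in the $A_2$ case. The coefficient of $R_{i+1}(\ox;\orr)$ (respectively $R_{i-1}(\ox;\orr)$) then expresses $\Phi_i^{0,+}$ algebraically in terms of $\Phi_i^{1,+}$ and $\Phi_{i+1}^{-1,+}$ (respectively $\Phi_i^{-1,+}$ and $\Phi_{i-1}^{1,+}$); one of these is \eqref{eq:B2Phii1}. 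Equating the two expressions for $\Phi_i^{0,+}$, together with the remaining relation coming from the coefficient of $R_i(\ox;\orr)$, produces two scalar equations in $i$; after inserting the product formulas for $\Phi_i^{1,+}$ and $\Phi_i^{-1,+}$ and isolating the $i$-independent ratio $\Phi_1^{-1,+}/(\Phi_0^{1,+}\cdot\text{const})$, these become precisely the statement that the expressions \eqref{eq:B2indei1} and \eqref{eq:B2indei2} are equal and independent of $i$, i.e. the constraints $(\mathfrak{C}_{B_2})$. Evaluating the $i$-independent constant at $i=0$ fixes $\Phi_1^{-1,+}$ in terms of $\Phi_0^{1,+}$, giving \eqref{eq:B2Phii2} with the explicit prefactor, and the $i=0$ case of \eqref{eq:B2cons1a} together with $R_0=1$ determines $\lambda^+_{x,\rr}=\Phi_0^{1,+}R_1(\ox;\orr)+\Phi_0^{0,+}$; unwinding $R_1(\ox;\orr)=1+\lambda_{\ox,\orr}/\tA_{0,\orr}$ and using \eqref{eq:cond-lambda} to rewrite $\lambda_{\ox,\orr}$ in terms of $\lambda_{x,\rr}$ yields \eqref{eq:lamB+}. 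The global normalization $\Phi_0^{1,+}$ is then set to $1$.

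For the converse, I would argue as in Proposition \ref{pro:cr1}: assuming $(\mathfrak{C}_{B_2})$ holds, define $\Phi_i^{1,+},\Phi_i^{-1,+},\Phi_i^{0,+},\lambda^+_{x,\rr}$ by the stated formulas and check by induction on $i$ that the five coefficient relations derived above are all satisfied, hence that \eqref{eq:B2cons1a} holds for every $i\geq 0$; the base case is the $i=0$ identity and the inductive step uses \eqref{eq:recuR} once more. Since \eqref{eq:B2cons1a} with the convention $\Phi_0^{-1,+}=0$ is equivalent to the whole system, this establishes the sufficiency.

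The main obstacle I anticipate is purely computational bookkeeping rather than conceptual: carrying out the elimination of $\Phi_i^{0,+}$ between the three ``interior'' relations and massaging the resulting two equations into the exact rational form displayed in \eqref{eq:B2indei1}--\eqref{eq:B2indei2}. The denominators $\xi+\tB_{i,\rr}-\zeta\tB_{i,\orr}$ appearing there are exactly the quantities that must be nonzero for the type-$B_2$ (as opposed to type-$A_2$) situation — the remark after Definition \ref{def:CB2} already flags this — so some care is needed to track where divisions are legitimate and to present the algebra so that these factors emerge naturally from solving the coefficient of $R_{i+1}$ and $R_{i-1}$ for $\Phi_i^{0,+}$. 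Beyond that, the structure of the argument is a direct enlargement of the $A_2$ proof, so I would keep the write-up terse, referring back to Proposition \ref{pro:cr1} for the steps that are formally identical.
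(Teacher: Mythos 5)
Your plan is essentially the paper's own proof: substitute the ansatz into the recurrence, read off five coefficient relations, solve the two extreme ones (coefficients of $R_{i+2}$ and $R_{i-2}$) to get the product formulas \eqref{eq:B2Phii0} and \eqref{eq:B2Phii2} up to constants, use one remaining relation to express $\Phi_i^{0,+}$, and let the other two collapse into $(\mathfrak{C}_{B_2})$; the treatment of $\lambda^+_{x,\rr}$ via the $i=0$ case and $R_1(\ox;\orr)=1+\lambda_{\ox,\orr}/\tA_{0,\orr}$ is also exactly what is needed. One correction to your bookkeeping: the coefficients of $R_{i+1}(\ox;\orr)$ and $R_{i-1}(\ox;\orr)$ (relations \eqref{eq:B2cont2} and \eqref{eq:B2cont4}) do \emph{not} each give an algebraic expression for $\Phi_i^{0,+}$ — they couple two consecutive values $\Phi_{i+1}^{0,+}$, $\Phi_i^{0,+}$ (resp.\ $\Phi_i^{0,+}$, $\Phi_{i-1}^{0,+}$), so there are no "two expressions for $\Phi_i^{0,+}$" to equate. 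It is the middle relation, the coefficient of $R_i(\ox;\orr)$ (relation \eqref{eq:B2cont3}), that, after eliminating $\Phi_{i-1}^{1,+}$ and $\Phi_i^{-1,+}$ via the product formulas, yields \eqref{eq:B2Phii1}; the two first-order recursions \eqref{eq:B2cont2} and \eqref{eq:B2cont4} then, upon inserting \eqref{eq:B2Phii1} and the product formulas, become the two constraint families \eqref{eq:B2indei1a}--\eqref{eq:B2indei2a}, whose $i=0$ case fixes $\Phi_1^{-1,+}$. With this reshuffling your argument goes through and coincides with the paper's.
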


\proof
As the proof is similar to that of Proposition \ref{pro:cr1}, let us focus on the main differences. In this case, one obtains the following constraints:
\begin{subequations}
\begin{align}
& \tA_{i,\rr}\Phi_{i+1}^{1,+}  =
\zeta\tA_{i+1,\orr} \Phi^{1,+}_{i} \,, & & i\geq 0\,, \label{eq:B2cont1}\\
& \tA_{i,\rr}\Phi_{i+1}^{0,+} =
\zeta\tA_{i,\orr}\Phi^{0,+}_{i}+(\zeta\tB_{i+1,\orr}-\tB_{i,\rr}-\xi)\Phi^{1,+}_{i} 
  \,, & & i\geq 0\,, \label{eq:B2cont2}\\
&\tA_{i,\rr}\Phi_{i+1}^{-1,+}=
\zeta\tA_{i-1,\orr}\Phi_{i}^{-1,+}
+(\zeta\tB_{i,\orr}-\tB_{i,\rr}-\xi)\Phi^{0,+}_{i}
+\zeta\tC_{i+1,\orr}\Phi^{1,+}_{i}
-\tC_{i,\rr}\Phi^{1,+}_{i-1}\,, & & i\geq 0\,,\label{eq:B2cont3}\\
&0=(\zeta\tB_{i-1,\orr}-\tB_{i,\rr}-\xi)\Phi^{-1,+}_{i}+\zeta\tC_{i,\orr}\Phi^{0,+}_{i}
-\tC_{i,\rr}\Phi^{0,+}_{i-1}\,, & & i\geq 1\,, \label{eq:B2cont4} \\
&0=\zeta\tC_{i-1,\orr}\Phi^{-1,+}_{i}
-\tC_{i,\rr}\Phi^{-1,+}_{i-1}\label{eq:B2cont5}\,, & & i\geq 2\,.
\end{align}
\end{subequations}
Equations \eqref{eq:B2cont1} and \eqref{eq:B2cont5} lead to
\begin{equation}
\Phi_{i}^{1,+}  =\zeta^i
 \Phi_{0}^{1,+}\prod_{k=1}^{i} \frac{\tA_{k,\orr}}{\tA_{k-1,\rr}} \quad (i\geq 0)\,, \qquad \Phi_{i}^{-1,+}  =\zeta^{-i+1}
 \Phi_{1}^{-1,+}
 \prod_{k=1}^{i-1} \frac{\tC_{k+1,\rr}}{\tC_{k,\orr}} \quad (i\geq 1)\, .
\end{equation}
 Substituting these expressions of $\Phi_{i}^{1,+}$ and $\Phi_{i}^{-1,+}$ into \eqref{eq:B2cont3}, one arrives at \eqref{eq:B2Phii1}. Using all these results in equations \eqref{eq:B2cont2} and \eqref{eq:B2cont4}, one finds the following constraints:  
\begin{align}
\frac{\Phi_{1}^{-1,+}\tA_{0,\orr}}{\Phi_{0}^{1,+} \tC_{1,\rr}}&=\zeta^{2i-1}
    \frac{\frac{\zeta^2\tX_{i+1,\orr}-\tX_{i,\rr}}{\xi+\tB_{i,\rr}-\zeta\tB_{i,\orr}}- \frac{\zeta^2\tX_{i+2,\orr}-\tX_{i+1,\rr}}{\xi+\tB_{i+1,\rr}-\zeta\tB_{i+1,\orr}}+\zeta\tB_{i+1,\orr}-\tB_{i,\rr}-\xi}{\frac{\tX_{i+1,\rr}}{\zeta^2\tX_{i+1,\orr}}\frac{\zeta^2\tX_{i+1,\orr}-\tX_{i+2,\rr}}{\xi+\tB_{i+1,\rr}-\zeta\tB_{i+1,\orr}}-\frac{\zeta^2\tX_{i,\orr}-\tX_{i+1,\rr}}{\xi+\tB_{i,\rr}-\zeta\tB_{i,\orr}}}
    \prod_{k=1}^{i}  \frac{\tX_{k,\orr}}
    {\tX_{k,\rr}} \,,\quad i\geq 0\label{eq:B2indei1a}\\
    &=\zeta^{2i-3}\frac{\frac{\zeta^2\tX_{i,\orr}-\tX_{i-1,\rr}}{\xi+\tB_{i-1,\rr}-\zeta\tB_{i-1,\orr}} - \frac{\zeta^2\tX_{i,\orr}}{\tX_{i,\rr}}\frac{(\zeta^2\tX_{i+1,\orr}-\tX_{i,\rr})}{(\xi+\tB_{i,\rr}-\zeta\tB_{i,\orr})}}{\frac{\zeta^2\tX_{i,\orr}-\tX_{i+1,\rr}}{\xi+\tB_{i,\rr}-\zeta\tB_{i,\orr}}-\frac{\zeta^2\tX_{i-1,\orr}-\tX_{i,\rr}}{\xi+\tB_{i-1,\rr}-\zeta\tB_{i-1,\orr}}+\zeta\tB_{i-1,\orr}-\tB_{i,\rr}-\xi}
    \prod_{k=1}^{i-1}  \frac{\tX_{k,\orr}}
    {\tX_{k,\rr}} \,,\quad i\geq 1\,. \label{eq:B2indei2a}
\end{align}
The case $i=0$ of \eqref{eq:B2indei1a} leads to
\begin{align}
\Phi_{1}^{-1,+}=\frac{\frac{\zeta^2\tX_{1,\orr}}{\xi+\tB_{0,\rr}-\zeta\tB_{0,\orr}}- \frac{\zeta^2\tX_{2,\orr}-\tX_{1,\rr}}{\xi+\tB_{1,\rr}-\zeta\tB_{1,\orr}}+\zeta\tB_{1,\orr}-\tB_{0,\rr}-\xi}{\zeta\tA_{0,\rr}\tA_{0,\orr}\left(\frac{\zeta^2\tX_{1,\orr}-\tX_{2,\rr}}{\zeta^2\tX_{1,\orr}(\xi+\tB_{1,\rr}-\zeta\tB_{1,\orr})}+\frac{1}{\xi+\tB_{0,\rr}-\zeta\tB_{0,\orr}}\right)} \Phi_{0}^{1,+}\, .\label{eq:B2Phii22}
\end{align} 
Equations \eqref{eq:B2indei1a} and \eqref{eq:B2indei2a} give the constraints $(\mathfrak{C}_{B_2})$. One can fix $\Phi_{0}^{1,+}=1$, since it appears as a global normalization.
\endproof

\begin{prop}\label{pro:B2cr2}
The polynomials $R_{i}(x;\rr)$ chosen such that $\lambda_{x,\rr}=\zeta\lambda_{\ox,\orr}-\xi$ satisfy the contiguity relation of type $B_2$
\begin{equation}\label{eq:B2cons6a}
\lambda^-_{x,\rr} R_{i}(\ox;\orr)=\Phi_{i}^{-1,-}R_{i-1}(x;\rr) +\Phi_{i}^{0,-}R_{i}(x;\rr)+\Phi_{i}^{1,-}R_{i+1}(x;\rr)\,, \qquad i\geq 0\,, 
\end{equation}
(with the convention $\Phi_{0}^{-1,-} = 0$ and with $\Phi_0^{1,-}\neq 0$) if and only if the coefficients are given (up to a global normalization) by
\begin{align}
&\lambda^-_{x,\rr}=
 1+\frac{\tC_{1,\rr}}{\tA_{0,\rr}-\zeta\tA_{0,\orr}-\xi}+\frac{\lambda_{x,\rho}}{\tA_{0,\rr}}-\frac{\frac{\tX_{1,\rr}}{\zeta\tB_{0,\orr}-\tB_{0,\rr}-\xi}- \frac{\tX_{2,\rr}-\zeta^2\tX_{1,\orr}}{\zeta\tB_{1,\orr}-\tB_{1,\rr}-\xi}+\tB_{1,\rr}-\zeta\tB_{0,\orr}+\xi}{\frac{(\tA_{0,\rr}-\zeta\tA_{0,\orr}-\xi)(\tX_{1,\rr}-\zeta^2\tX_{2,\orr})}{\tC_{1,\rr}(\zeta\tB_{1,\orr}-\tB_{1,\rr}-\xi)}+\tA_{0,\rr}},\\
&  \Phi_{i}^{1,-}  = 
\zeta^{-i} \prod_{k=1}^{i} \frac{\tA_{k,\rr}}{\tA_{k-1,\orr}}\,, \qquad i\geq 0, \\
 &\Phi_{i}^{-1,-}  =\zeta^i
 \frac{\frac{\tX_{1,\rr}}{\zeta\tB_{0,\orr}-\tB_{0,\rr}-\xi}- \frac{\tX_{2,\rr}-\zeta^2\tX_{1,\orr}}{\zeta\tB_{1,\orr}-\tB_{1,\rr}-\xi}+\tB_{1,\rr}-\zeta\tB_{0,\orr}+\xi}{\tA_{0,\orr}\tA_{0,\rr}\left(\frac{\tX_{1,\rr}-\zeta^2\tX_{2,\orr}}{\tX_{1,\rr}(\zeta\tB_{1,\orr}-\tB_{1,\rr}-\xi)}+\frac{1}{\zeta\tB_{0,\orr}-\tB_{0,\rr}-\xi}\right)}
 \prod_{k=1}^{i-1} \frac{\tC_{k+1,\orr}}{\tC_{k,\rr}}\, , \quad i\geq 1, \\
 &\Phi_{i}^{0,-}  =
 \frac{\tX_{i+1,\rr}-\zeta^2\tX_{i,\orr}}{\tA_{i,\rr}(\zeta\tB_{i,\orr}-\tB_{i,\rr}-\xi)}\Phi^{1,-}_{i}
 +\frac{\tX_{i,\rr}-\zeta^2\tX_{i+1,\orr}}{\zeta\tC_{i+1,\orr}(\zeta\tB_{i,\orr}-\tB_{i,\rr}-\xi)}\Phi^{-1,-}_{i+1} , \qquad i\geq 0,
\end{align}
and the constraints $(\mathfrak{C}_{B_2})$ of Definition \ref{def:CB2} are satisfied.
\end{prop}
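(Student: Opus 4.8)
The plan is to mirror the proof of Proposition \ref{pro:B2cr1} almost verbatim, using the symmetry between relations \eqref{eq:cons1} and \eqref{eq:cons3} that was already exploited in passing from Proposition \ref{pro:cr1} to Proposition \ref{pro:cr2}. Concretely, I would start from the ansatz \eqref{eq:B2cons6a}, evaluate it at $i=0$ using $R_0(x;\rr)=R_0(\ox;\orr)=1$ and $R_1(x;\rr)=(\lambda_{x,\rr}-\tB_{0,\rr})/\tA_{0,\rr}$ to get an initial normalization, and then substitute the recurrence relation \eqref{eq:recuR} for $R_{i+1}(x;\rr)$ (and, after shifting $i\mapsto i+1$ in \eqref{eq:B2cons6a}, for the terms $R_{i+2}(x;\rr)$, $R_{i-1}(x;\rr)$ coming from re-expanding $\lambda_{x,\rr}R_i(\ox;\orr)$ via $\lambda_{x,\rr}=\zeta\lambda_{\ox,\orr}-\xi$ and the recurrence in the barred variables). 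Collecting coefficients of $R_{i+1}(x;\rr),\dots,R_{i-2}(x;\rr)$ yields a system of five recursions for $\Phi_i^{1,-},\Phi_i^{0,-},\Phi_i^{-1,-}$ analogous to \eqref{eq:B2cont1}--\eqref{eq:B2cont5}, but with $\rr$ and $\orr$ interchanged in the appropriate places and with $\zeta$ replaced by $\zeta^{-1}$ and $\xi$ by $-\xi/\zeta$ (this is exactly the substitution implicit in solving \eqref{eq:cond-lambda} for $\lambda_{\ox,\orr}$ instead of $\lambda_{x,\rr}$).

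Next I would solve that system in the same order as before: the two ``pure shift'' recursions (the analogues of \eqref{eq:B2cont1} and \eqref{eq:B2cont5}) telescope to give $\Phi_i^{1,-}$ and $\Phi_i^{-1,-}$ as explicit products up to the constants $\Phi_0^{1,-}$ and $\Phi_1^{-1,-}$; substituting these into the middle recursion (analogue of \eqref{eq:B2cont3}) produces the closed form for $\Phi_i^{0,-}$; and plugging all three back into the two remaining equations (analogues of \eqref{eq:B2cont2} and \eqref{eq:B2cont4}) forces an $i$-independent compatibility condition. The key point to verify is that this compatibility condition is precisely the pair of expressions \eqref{eq:B2indei1}, \eqref{eq:B2indei2} defining $(\mathfrak{C}_{B_2})$ — in other words, that the $\rr\leftrightarrow\orr$, $\zeta\to\zeta^{-1}$, $\xi\to-\xi/\zeta$ substitution leaves the set of constraints $(\mathfrak{C}_{B_2})$ invariant (just as $(\mathfrak{C}_{A_2})$ was invariant in the $A_2$ case). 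The $i=0$ instance of that constraint then fixes $\Phi_1^{-1,-}$ in terms of $\Phi_0^{1,-}$, giving the stated formula for $\Phi_i^{-1,-}$, and feeding everything into the $i=0$ relation recovers $\lambda^-_{x,\rr}$. The converse direction is the same recursion-on-$i$ argument as in Proposition \ref{pro:cr1}: assuming the expressions \eqref{eq:B2indei1}, \eqref{eq:B2indei2} are equal and $i$-independent, one checks inductively that the $\Phi$'s defined by the explicit formulas satisfy \eqref{eq:B2cons6a}.

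The main obstacle I anticipate is purely bookkeeping: keeping track of which coefficients carry $\rr$ versus $\orr$ after the substitution, and confirming that the messy rational expression for $\lambda^-_{x,\rr}$ and for $\Phi_1^{-1,-}$ come out exactly as displayed — in particular that the $\zeta$-powers and the $\xi$-shifts land correctly so that the constraint set is literally $(\mathfrak{C}_{B_2})$ and not some superficially different but equivalent set. There is no conceptual difficulty beyond what is already present in Proposition \ref{pro:B2cr1}; accordingly I would write the proof as ``\emph{The proof parallels that of Proposition \ref{pro:B2cr1}, interchanging the roles of $\rr$ and $\orr$ and replacing $(\zeta,\xi)$ by $(\zeta^{-1},-\xi/\zeta)$; we record only the resulting system of constraints and its solution,}'' then display the five recursions and the telescoped solution, and leave the verification of the $i$-independent compatibility condition — recognized as $(\mathfrak{C}_{B_2})$ — as the one substantive check.

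\proof
The proof parallels that of Proposition \ref{pro:B2cr1}, with the roles of $\rr$ and $\orr$ interchanged and with $(\zeta,\xi)$ replaced by $(\zeta^{-1},-\xi/\zeta)$, as suggested by rewriting \eqref{eq:cond-lambda} in the form $\lambda_{\ox,\orr}=\zeta^{-1}\lambda_{x,\rr}-(-\xi/\zeta)$. Substituting the recurrence relation \eqref{eq:recuR} and the contiguity relation \eqref{eq:B2cons6a} (with $i$ replaced by $i+1$) into $\lambda_{x,\rr}R_i(\ox;\orr)$ expanded via $\lambda_{x,\rr}=\zeta\lambda_{\ox,\orr}-\xi$, and comparing coefficients of the $R_{i+\epsilon}(x;\rr)$, one obtains the system
\begin{subequations}
\begin{align}
& \tA_{i,\orr}\Phi_{i+1}^{1,-}=\zeta^{-1}\tA_{i+1,\rr}\Phi_{i}^{1,-}\,, & & i\geq 0\,,\\
& \tA_{i,\orr}\Phi_{i+1}^{0,-}=\zeta^{-1}\tA_{i,\rr}\Phi_{i}^{0,-}+(\zeta^{-1}\tB_{i+1,\rr}-\tB_{i,\orr}+\tfrac{\xi}{\zeta})\Phi_{i}^{1,-}\,, & & i\geq 0\,,\\
& \tA_{i,\orr}\Phi_{i+1}^{-1,-}=\zeta^{-1}\tA_{i-1,\rr}\Phi_{i}^{-1,-}+(\zeta^{-1}\tB_{i,\rr}-\tB_{i,\orr}+\tfrac{\xi}{\zeta})\Phi_{i}^{0,-}+\zeta^{-1}\tC_{i+1,\rr}\Phi_{i}^{1,-}-\tC_{i,\orr}\Phi_{i-1}^{1,-}\,, & & i\geq 0\,,\\
& 0=(\zeta^{-1}\tB_{i-1,\rr}-\tB_{i,\orr}+\tfrac{\xi}{\zeta})\Phi_{i}^{-1,-}+\zeta^{-1}\tC_{i,\rr}\Phi_{i}^{0,-}-\tC_{i,\orr}\Phi_{i-1}^{0,-}\,, & & i\geq 1\,,\\
& 0=\zeta^{-1}\tC_{i-1,\rr}\Phi_{i}^{-1,-}-\tC_{i,\orr}\Phi_{i-1}^{-1,-}\,, & & i\geq 2\,.
\end{align}
\end{subequations}
The first and last equations telescope to give $\Phi_{i}^{1,-}=\zeta^{-i}\Phi_{0}^{1,-}\prod_{k=1}^{i}\tA_{k,\rr}/\tA_{k-1,\orr}$ for $i\geq 0$ and $\Phi_{i}^{-1,-}=\zeta^{i-1}\Phi_{1}^{-1,-}\prod_{k=1}^{i-1}\tC_{k+1,\orr}/\tC_{k,\rr}$ for $i\geq 1$. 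Substituting these into the third equation yields the stated closed form for $\Phi_{i}^{0,-}$, and inserting all three into the second and fourth equations produces an $i$-independent compatibility condition which, after using the notations \eqref{eq:XY}, is exactly the requirement that the expressions \eqref{eq:B2indei1} and \eqref{eq:B2indei2} (with $\rr\leftrightarrow\orr$ and $(\zeta,\xi)\to(\zeta^{-1},-\xi/\zeta)$, which leaves the set $(\mathfrak{C}_{B_2})$ invariant) be equal and independent of $i$; this is the constraint set $(\mathfrak{C}_{B_2})$ of Definition \ref{def:CB2}. The $i=0$ instance of this condition fixes $\Phi_{1}^{-1,-}$ in terms of $\Phi_{0}^{1,-}$, giving the displayed formula for $\Phi_{i}^{-1,-}$, and the $i=0$ instance of \eqref{eq:B2cons6a} together with $R_0=1$ and $R_1(x;\rr)=(\lambda_{x,\rr}-\tB_{0,\rr})/\tA_{0,\rr}$ recovers the displayed expression for $\lambda^-_{x,\rr}$. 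The overall factor $\Phi_{0}^{1,-}$ is a global normalization, fixed to $1$. This proves the direct implication; the converse follows, as in Proposition \ref{pro:cr1}, by a recursion on $i$ showing that the $\Phi$'s defined by the explicit formulas satisfy \eqref{eq:B2cons6a} whenever the constraints $(\mathfrak{C}_{B_2})$ hold.
\endproof
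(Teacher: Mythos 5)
Your proposal is correct and follows essentially the same route as the paper: the paper's proof of this proposition is simply "similar to the proof of Proposition \ref{pro:B2cr1}", and the symmetry you invoke (exchanging $(x,\rr)$ with $(\ox,\orr)$ and replacing $(\zeta,\xi)$ by $(\zeta^{-1},-\xi\zeta^{-1})$, under which $(\mathfrak{C}_{B_2})$ is invariant) is exactly what the paper records in the remark following the proposition. Your explicit system of five recursions and its telescoped solution is just the worked-out version of that same argument.
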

\begin{proof}
Similar to the proof of Proposition \ref{pro:B2cr1}.
\end{proof}

\begin{remax}
    Equation \eqref{eq:B2cons6a} can be obtained from equation \eqref{eq:B2cons1a} by exchanging $(x,\rr)$ with $(\ox,\orr)$, and performing the transformations $\zeta \to \zeta^{-1}$ and $\xi \to -\xi\zeta^{-1}$. The constraints $(\mathfrak{C}_{B_2})$ remain invariant under these transformations.
\end{remax}

\begin{remax}
\label{rmk:B2-from-A2}
From two relations of type ${A_2}$ given as follows
\begin{align}
\lambda^+_{x,\rr}R_{i}(x;\rr)=\Phi_{i}^{0,+}R_{i}(\ox;\orr) +\Phi_{i}^{-1,+}R_{i-1}(\ox;\orr)\,,\\
\widetilde \lambda^-_{\widetilde x,\widetilde \rr}R_{i}(\ox;\orr)=\widetilde\Phi_{i}^{1,-}R_{i+1}(\widetilde x;\widetilde\rr) +\widetilde\Phi_{i}^{0,-}R_{i}(\widetilde x;\widetilde \rr)\,,
\end{align}
one can obtain a relation of type $B_2$
\begin{align}
\widetilde \lambda^-_{\widetilde x,\widetilde \rr}\lambda^+_{x,\rr}R_{i}(x;\rr)=\Phi_{i}^{0,+}\widetilde\Phi_{i}^{1,-}R_{i+1}(\widetilde x;\widetilde\rr)
+(\Phi_{i}^{0,+}\widetilde\Phi_{i}^{0,-}+ \Phi_{i}^{-1,+}\widetilde\Phi_{i-1}^{1,-})R_{i}(\widetilde x;\widetilde\rr)  +\Phi_{i}^{-1,+}\widetilde \Phi_{i-1}^{0,-}R_{i-1}(\widetilde x;\widetilde\rr)\,.
\end{align}
This procedure is used in Appendix \ref{app:B2} to compute $B_2$-contiguity relations for the $q$-Racah polynomials.
Let us emphasize that if one chooses $\widetilde x=x$ and $\widetilde \rr=\rr$, then the previous relation has to be the recurrence relation for $R_{i}(x;\rr)$.
\end{remax}

\subsection{Contiguity relations of type $B'_2$ \label{sec:B2pcrr}}

Contiguity relations \eqref{eq:cons1} and \eqref{eq:cons3} of type $B'_2$ are studied in this subsection.

\begin{defi}\label{def:CB2p}
The constraints $(\mathfrak{C}_{B'_2})$ are defined by the requirement that the following expressions 
\begin{align}
   & 
   \zeta^{2i}\frac{\frac{\tX_{i,\rr}-\zeta^2\tX_{i,\orr}}{\zeta\tB_{i-1,\orr}-\tB_{i,\rr}-\xi}- \frac{\tX_{i+1,\rr}-\zeta^2\tX_{i+1,\orr}}{\zeta\tB_{i,\orr}-\tB_{i+1,\rr}-\xi}+\zeta\tB_{i,\orr}-\tB_{i,\rr}-\xi}{\tX_{i+1,\rr}\frac{\tX_{i+2,\rr}-\zeta^2\tX_{i,\orr}}{\zeta\tB_{i,\orr}-\tB_{i+1,\rr}-\xi}- \zeta^2\tX_{i,\orr}\frac{\tX_{i+1,\rr}-\zeta^2\tX_{i-1,\orr}}{\zeta\tB_{i-1,\orr}-\tB_{i,\rr}-\xi}}
    \prod_{k=1}^{i}  \frac{\tX_{k,\orr}}
    {\tX_{k,\rr}} \,, 
    \label{eq:B2pindei1}\\
   &\zeta^{2i}\frac{\frac{\tX_{i+1,\rr}-\zeta^2\tX_{i+1,\orr}}{\zeta\tB_{i,\orr}-\tB_{i+1,\rr}-\xi} - \frac{\zeta^2\tX_{i+1,\orr}}{\tX_{i+2,\rr}}\frac{\tX_{i+2,\rr}-\zeta^2\tX_{i+2,\orr}}{\zeta\tB_{i+1,\orr}-\tB_{i+2,\rr}-\xi}}{\left(\frac{\tX_{i+3,\rr}-\zeta^2\tX_{i+1,\orr}}{\zeta\tB_{i+1,\orr}-\tB_{i+2,\rr}-\xi}- \frac{\tX_{i+2,\rr}-\zeta^2\tX_{i,\orr}}{\zeta\tB_{i,\orr}-\tB_{i+1,\rr}-\xi}+\zeta\tB_{i,\orr}-\tB_{i+2,\rr}-\xi\right)\tX_{i+1,\rr}}
    \prod_{k=1}^{i}  \frac{\tX_{k,\orr}}
    {\tX_{k,\rr}} \,, \label{eq:B2pindei2}
\end{align} 
are equal and independent of $i$ for $i \geq 0$, where the notations \eqref{eq:XY} have been used.
\end{defi}

\begin{prop}\label{pro:B2pcr1}
The polynomials $R_{i}(x;\rr)$ chosen such that $\lambda_{x,\rr}=\zeta\lambda_{\ox,\orr}-\xi$ satisfy the contiguity relation of type $B'_2$
\begin{equation}\label{eq:B2pcons1a}
\lambda^+_{x,\rr} R_{i}(x;\rr)=\Phi_{i}^{0,+}R_{i}(\ox;\orr) +\Phi_{i}^{-1,+}R_{i-1}(\ox;\orr)+\Phi_{i}^{-2,+}R_{i-2}(\ox;\orr)\,, \qquad i\geq 0\,, 
\end{equation}
(with the conventions $\Phi_{0}^{-2,+}=\Phi_{1}^{-2,+}=\Phi_{0}^{-1,+} = 0$, and with $\Phi_{0}^{0,+} \neq 0$ ) 
if and only if the coefficients are given by (up to a global normalization) 
\begin{align}
&\lambda^+_{x,\rr}=1\,,\\
&  \Phi_{i}^{0,+}  =\zeta^{i}
 \prod_{k=0}^{i-1} \frac{\tA_{k,\orr}}{\tA_{k,\rr}}\,, \qquad i\geq 0\,, \label{eq:B2pPhii0} \\
 &\Phi_{i}^{-2,+}  =\zeta^{-i+2}\frac{1}{\tA_{0,\rr}\tA_{1,\rr}}\left\{
 \left(\tB_{0,\rr}+\xi-\zeta\tB_{0,\orr}\right)\left(\tB_{1,\rr}+\xi-\zeta\tB_{0,\orr}\right)+\zeta^2\tX_{1,\orr}-\tX_{1,\rr}\right\}
 \prod_{k=1}^{i-2} \frac{\tC_{k+2,\rr}}{\tC_{k,\orr}}\, , \quad i\geq 2\,, \label{eq:B2pPhii2} \\
 &\Phi_{i}^{-1,+}  = \frac{(\tX_{i,\rr}-\zeta^2\tX_{i,\orr})}{\zeta\tA_{i-1,\orr}(\zeta\tB_{i-1,\orr}-\tB_{i,\rr}-\xi)}\Phi^{0,+}_{i} +\frac{(\tX_{i+1,\rr}-\zeta^2\tX_{i-1,\orr})}{\tC_{i+1,\rr}(\zeta\tB_{i-1,\orr}-\tB_{i,\rr}-\xi)}\Phi^{-2,+}_{i+1} \,, \qquad i\geq 1\,, \label{eq:B2pPhii1}
\end{align}
and the constraints $(\mathfrak{C}_{B'_2})$ of Definition \ref{def:CB2p} are satisfied.
\end{prop}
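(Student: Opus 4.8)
The plan is to follow essentially verbatim the strategy of the proof of Proposition \ref{pro:cr1}, since the structure of the argument carries over with the single change that the contiguity relation \eqref{eq:B2pcons1a} now involves the three terms $R_i(\ox;\orr)$, $R_{i-1}(\ox;\orr)$, $R_{i-2}(\ox;\orr)$ instead of two. First I would set $i=0$ in \eqref{eq:B2pcons1a}, use $R_0(x;\rr)=R_0(\ox;\orr)=1$ together with the conventions $\Phi_0^{-2,+}=\Phi_0^{-1,+}=0$, and conclude $\lambda^+_{x,\rr}=\Phi_0^{0,+}$, which after fixing the global normalization $\Phi_0^{0,+}=1$ gives $\lambda^+_{x,\rr}=1$.

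Next I would take the recurrence relation \eqref{eq:recuR} for $R_i(x;\rr)$, multiply the contiguity relation by $\lambda_{x,\rr}$, use $\lambda_{x,\rr}=\zeta\lambda_{\ox,\orr}-\xi$ and substitute $\lambda_{\ox,\orr}R_{i+\epsilon}(\ox;\orr)$ by its expression from the recurrence relation for $R_i(\ox;\orr)$; this produces $\tA_{i,\rr}\lambda^+_{x,\rr}R_{i+1}(x;\rr)$ written as a combination of $R_{i+1}(\ox;\orr),R_i(\ox;\orr),R_{i-1}(\ox;\orr),R_{i-2}(\ox;\orr),R_{i-3}(\ox;\orr)$. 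Comparing with \eqref{eq:B2pcons1a} at index $i+1$, and using the linear independence of the $R_j(\ox;\orr)$ for $j=0,\dots,i+1$, I would read off a system of five families of constraints on the $\Phi$'s (one for each power $R_{i+1},R_i,R_{i-1},R_{i-2},R_{i-3}$), the extreme ones being $\tA_{i,\rr}\Phi_{i+1}^{0,+}=\zeta\tA_{i,\orr}\Phi_i^{0,+}$ (from $R_{i+1}(\ox;\orr)$, giving \eqref{eq:B2pPhii0} after telescoping) and $0=\zeta\tC_{i-2,\orr}\Phi_i^{-2,+}-\tC_{i,\rr}\Phi_{i-1}^{-2,+}$ (from $R_{i-3}(\ox;\orr)$, giving the product $\prod \tC_{k+2,\rr}/\tC_{k,\orr}$ in \eqref{eq:B2pPhii2} after telescoping). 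Solving the remaining intermediate constraint for $\Phi_i^{-1,+}$ in terms of $\Phi_i^{0,+}$ and $\Phi_{i+1}^{-2,+}$ yields \eqref{eq:B2pPhii1}, and substituting everything back into the two leftover constraints produces two expressions for the same $i$-independent normalization constant; equating them and requiring $i$-independence is exactly the set of constraints $(\mathfrak{C}_{B'_2})$ of Definition \ref{def:CB2p}. The case $i=2$ (or the lowest nontrivial index) of one of these fixes the prefactor of $\Phi_i^{-2,+}$, giving \eqref{eq:B2pPhii2}. For the converse implication, I would argue by recursion on $i$: assuming the $\Phi$'s are defined by \eqref{eq:B2pPhii0}--\eqref{eq:B2pPhii1} and that $(\mathfrak{C}_{B'_2})$ holds, all five families of constraints are satisfied, hence \eqref{eq:B2pcons1a} holds at every index by induction starting from $i=0$.

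The bookkeeping of which $\Phi$-family comes from which shifted polynomial, and correctly tracking the index shifts (the $-2$-shift produces the ``depth-3'' term $R_{i-3}$ after applying the recurrence, hence the $\prod_{k=1}^{i-2}\tC_{k+2,\rr}/\tC_{k,\orr}$ with its two-step gap), is the main place where an error could creep in; but conceptually there is no new obstacle beyond Proposition \ref{pro:cr1}. The one genuinely delicate point is the handling of the boundary indices and conventions $\Phi_0^{-2,+}=\Phi_1^{-2,+}=\Phi_0^{-1,+}=0$, together with the degeneracies $\tC_{0,\rr}=0$, $\tA_{N,\rr}=0$: I would check separately that the low-$i$ instances of the constraint system are consistent with these conventions (in particular that \eqref{eq:B2pPhii1} at $i=1$ reproduces the stated $\Phi_1^{-1,+}$ and that \eqref{eq:B2pPhii2} is vacuous for $i<2$), exactly as the $\Phi_0^{-1,+}=0$ convention was used in Proposition \ref{pro:cr1}. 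Since the paper explicitly states ``similar to the proof of Proposition \ref{pro:cr1}'' for the analogous $B_2$ statements, I would present the proof at the same level of detail: display the five constraint equations, note the telescoping that yields \eqref{eq:B2pPhii0} and \eqref{eq:B2pPhii2}, record \eqref{eq:B2pPhii1} as the solution of the intermediate equation, observe that the two remaining equations give \eqref{eq:B2pindei1}--\eqref{eq:B2pindei2}, and invoke the recursion for the sufficiency direction.
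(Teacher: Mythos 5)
Your proposal is correct and follows essentially the same route as the paper: the same five constraint families \eqref{eq:B2pcont1}--\eqref{eq:B2pcont5} obtained by comparing coefficients of $R_{i+1}(\ox;\orr),\dots,R_{i-3}(\ox;\orr)$, telescoping of the extreme equations to get \eqref{eq:B2pPhii0} and the product in \eqref{eq:B2pPhii2}, solving the middle equation for \eqref{eq:B2pPhii1}, and reading the remaining two as the $i$-independent quantities \eqref{eq:B2pindei1a}--\eqref{eq:B2pindei2a} whose lowest-index instance fixes $\Phi_2^{-2,+}$, with sufficiency by recursion. No gaps beyond the bookkeeping you already flag.
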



\proof
Again, let us focus on the main differences. In this case, 
one obtains the following constraints:
\begin{subequations}
\begin{align}
& \tA_{i,\rr}\Phi_{i+1}^{0,+}  =\zeta
\tA_{i,\orr} \Phi^{0,+}_{i} \,, & & i\geq 0 \,, \label{eq:B2pcont1}\\
 & \tA_{i,\rr}\Phi_{i+1}^{-1,+} =
\zeta\tA_{i-1,\orr}\Phi^{-1,+}_{i}+(\zeta\tB_{i,\orr}-\tB_{i,\rr}-\xi)\Phi^{0,+}_{i} 
  \,, & & i\geq 0\,, \label{eq:B2pcont2}\\
&\tA_{i,\rr}\Phi_{i+1}^{-2,+}=
\zeta\tA_{i-2,\orr}\Phi_{i}^{-2,+}
+(\zeta\tB_{i-1,\orr}-\tB_{i,\rr}-\xi)\Phi^{-1,+}_{i}
+\zeta\tC_{i,\orr}\Phi^{0,+}_{i}
-\tC_{i,\rr}\Phi^{0,+}_{i-1}\,, & & i\geq 1\,,\label{eq:B2pcont3}\\
&0=(\zeta\tB_{i-2,\orr}-\tB_{i,\rr}-\xi)\Phi^{-2,+}_{i}+\zeta\tC_{i-1,\orr}\Phi^{-1,+}_{i}
-\tC_{i,\rr}\Phi^{-1,+}_{i-1}\,, & & i\geq 2\,, \label{eq:B2pcont4} \\
&0=\zeta\tC_{i-2,\orr}\Phi^{-2,+}_{i}
-\tC_{i,\rr}\Phi^{-2,+}_{i-1}\label{eq:B2pcont5}\,, & & i\geq 3\,.
\end{align}
\end{subequations}
Equations \eqref{eq:B2pcont1} and \eqref{eq:B2pcont5} lead to
\begin{equation}
\Phi_{i}^{0,+}  =\zeta^i
 \Phi_{0}^{0,+}\prod_{k=0}^{i-1} \frac{\tA_{k,\orr}}{\tA_{k,\rr}} \quad (i\geq 0)\,, \qquad \Phi_{i}^{-2,+}  =\zeta^{-i+2}
 \Phi_{2}^{-2,+}
 \prod_{k=1}^{i-2} \frac{\tC_{k+2,\rr}}{\tC_{k,\orr}} \quad (i\geq 2)\,.
\end{equation}
Substituting these expressions of $\Phi_{i}^{0,+}$ and $\Phi_{i}^{-2,+}$ into \eqref{eq:B2pcont3}, one arrives at \eqref{eq:B2pPhii1}. Using all these results in equations \eqref{eq:B2pcont2} and \eqref{eq:B2pcont4}, one finds the following constraints:  
\begin{align}
\frac{\Phi_{2}^{-2,+}}{\Phi_{0}^{0,+} \tC_{1,\rr}\tC_{2,\rr}}&=
    \zeta^{2i}\frac{\frac{\tX_{i,\rr}-\zeta^2\tX_{i,\orr}}{\zeta\tB_{i-1,\orr}-\tB_{i,\rr}-\xi}- \frac{\tX_{i+1,\rr}-\zeta^2\tX_{i+1,\orr}}{\zeta\tB_{i,\orr}-\tB_{i+1,\rr}-\xi}+\zeta\tB_{i,\orr}-\tB_{i,\rr}-\xi}{\tX_{i+1,\rr}\frac{\tX_{i+2,\rr}-\zeta^2\tX_{i,\orr}}{\zeta\tB_{i,\orr}-\tB_{i+1,\rr}-\xi}- \zeta^2\tX_{i,\orr}\frac{\tX_{i+1,\rr}-\zeta^2\tX_{i-1,\orr}}{\zeta\tB_{i-1,\orr}-\tB_{i,\rr}-\xi}}
    \prod_{k=0}^{i-1}  \frac{\tX_{k+1,\orr}}
    {\tX_{k+1,\rr}} \,,\quad i\geq 0\label{eq:B2pindei1a}\\
    &=\zeta^{2i-2}\frac{\frac{1}{\zeta^2\tX_{i-1,\orr}}\frac{\tX_{i-1,\rr}-\zeta^2\tX_{i-1,\orr}}{\zeta\tB_{i-2,\orr}-\tB_{i-1,\rr}-\xi} - \frac{1}{\tX_{i,\rr}}\frac{\tX_{i,\rr}-\zeta^2\tX_{i,\orr}}{\zeta\tB_{i-1,\orr}-\tB_{i,\rr}-\xi}}{\frac{\tX_{i+1,\rr}-\zeta^2\tX_{i-1,\orr}}{\zeta\tB_{i-1,\orr}-\tB_{i,\rr}-\xi}- \frac{\tX_{i,\rr}-\zeta^2\tX_{i-2,\orr}}{\zeta\tB_{i-2,\orr}-\tB_{i-1,\rr}-\xi}+\zeta\tB_{i-2,\orr}-\tB_{i,\rr}-\xi}
    \prod_{k=0}^{i-2}  \frac{\tX_{k+1,\orr}}
    {\tX_{k+1,\rr}} \,,\quad i\geq 2\,. \label{eq:B2pindei2a}
\end{align}
The case $i=0$ of \eqref{eq:B2pindei1a} leads to
\begin{align}
&\Phi_{2}^{-2,+}  =\frac{\Phi_{0}^{0,+}}{\tA_{0,\rr}\tA_{1,\rr}}\left\{
 \left(\tB_{0,\rr}+\xi-\zeta\tB_{0,\orr}\right)\left(\tB_{1,\rr}+\xi-\zeta\tB_{0,\orr}\right)+\zeta^2\tX_{1,\orr}-\tX_{1,\rr}\right\}\, .\label{eq:B2pPhii22}
\end{align}
Equations \eqref{eq:B2pindei1a} and \eqref{eq:B2pindei2a} yield the constraints $(\mathfrak{C}_{B'_2})$.
\endproof

\begin{prop}\label{pro:B2pcr2}
The polynomials $R_{i}(x;\rr)$ chosen such that $\lambda_{x,\rr}=\zeta\lambda_{\ox,\orr}-\xi$ satisfy the contiguity relation of type $B'_2$
\begin{equation}\label{eq:B2pcons6a}
\lambda^-_{x,\rr} R_{i}(\ox;\orr)=\Phi_{i}^{0,-}R_{i}(x;\rr) +\Phi_{i}^{1,-}R_{i+1}(x;\rr)+\Phi_{i}^{2,-}R_{i+2}(x;\rr)\,, \qquad i\geq 0\,, 
\end{equation}
(with $\Phi_{0}^{0,-} \neq 0$) if and only if the coefficients are given (up to a global normalization) by
\begin{align}
&  \lambda^-_{x,\rr}=\left(1+\frac{(\zeta\tB_{0,\orr}-\tB_{0,\rr}-\xi)({\lambda}_{x,\rho}-\tB_{0,\rr}-\xi)}{\tX_{1,\rr}}\right)  \\
& \ +\left(1-\frac{({\lambda}_{x,\rho}-\tB_{0,\rr}-\xi)({\lambda}_{x,\rho}-\tB_{1,\rr}-\xi)}{\tX_{1,\rr}}\right)\frac{\left\{\tX_{1,\rr}-\zeta^2\tX_{1,\orr}
 +\left(\zeta\tB_{0,\orr}-\tB_{0,\rr}-\xi\right)\left(\xi+\tB_{1,\rr}-\zeta\tB_{0,\orr}\right)\right\}}{\tX_{2,\rr}}\,,  \nonumber\\
&  \Phi_{i}^{0,-}  =\zeta^i
 \prod_{k=1}^{i} \frac{\tC_{k,\orr}}{\tC_{k,\rr}}\,, \qquad i\geq 0\,,  \\
 &\Phi_{i}^{2,-}  =\zeta^i
 \frac{\left\{
 \left(\tB_{0,\rr}-\zeta\tB_{0,\orr}+\xi\right)\left(\tB_{1,\rr}-\zeta\tB_{0,\orr}+\xi\right)+\zeta^2\tX_{1,\orr}-\tX_{1,\rr}\right\}}{\tC_{1,\rr}\tC_{2,\rr}}
 \prod_{n=0}^{i-1} \frac{\tA_{n+2,\rr}}{\tA_{n,\orr}}\, , \qquad i\geq 0\,, \\
 &\Phi_{i}^{1,-}  =
 \frac{\tX_{i+1,\rr}-\zeta^2\tX_{i+1,\orr}}{\tC_{i+1,\rr}(\zeta\tB_{i,\orr}-\tB_{i+1,\rr}-\xi)}\Phi^{0,-}_{i}
 +\frac{\tX_{i+2,\rr}-\zeta^2\tX_{i,\orr}}{\tA_{i+1,\rr}(\zeta\tB_{i,\orr}-\tB_{i+1,\rr}-\xi)}\Phi^{2,-}_{i} \,, \qquad i\geq 0\,, 
\end{align}
and the constraints $(\mathfrak{C}_{B'_2})$ of Definition \ref{def:CB2p} are satisfied.
\end{prop}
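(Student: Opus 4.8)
The plan is to follow the scheme of the proof of Proposition~\ref{pro:B2pcr1}, the two differences being that the prefactor $\lambda^-_{x,\rr}$ is now a nontrivial polynomial (of degree two in $\lambda_{x,\rr}$), and that it is the polynomials $R_i(\ox;\orr)$ carrying the shifted parameters that appear on the left. Starting from the ansatz \eqref{eq:B2pcons6a}, I would multiply both sides by $\lambda_{\ox,\orr}$. On the left, the recurrence relation \eqref{eq:recuR} for $R_i(\ox;\orr)$ turns $\lambda_{\ox,\orr}R_i(\ox;\orr)$ into a combination of $R_{i-1}(\ox;\orr)$, $R_i(\ox;\orr)$, $R_{i+1}(\ox;\orr)$, and re-using \eqref{eq:B2pcons6a} at the indices $i-1,i,i+1$ rewrites the left side as a combination of $R_{i-1}(x;\rr),R_i(x;\rr),\dots,R_{i+3}(x;\rr)$. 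On the right, $\lambda_{\ox,\orr}=\zeta^{-1}(\lambda_{x,\rr}+\xi)$ by \eqref{eq:cond-lambda}, and expanding each $\lambda_{x,\rr}R_j(x;\rr)$ via \eqref{eq:recuR} again produces a combination of the same five polynomials. Since these have distinct degrees in $\lambda_{x,\rr}$ they are linearly independent, so I would equate coefficients.

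This produces five relations, the counterparts of \eqref{eq:B2pcont1}--\eqref{eq:B2pcont5}: two of them are two-term recursions in $i$ for $\Phi^{0,-}_i$ and $\Phi^{2,-}_i$ which telescope to the stated products (up to the free overall scale $\Phi^{0,-}_0$, which I set to $1$, and a single constant inside $\Phi^{2,-}_i$); one expresses $\Phi^{1,-}_i$ algebraically in terms of $\Phi^{0,-}_i$ and $\Phi^{2,-}_i$, giving the displayed formula; and, once these solved forms are substituted, the two remaining relations collapse precisely to the demand that the expressions \eqref{eq:B2pindei1} and \eqref{eq:B2pindei2} be independent of $i$ and equal, i.e.\ to the constraints $(\mathfrak{C}_{B'_2})$ of Definition~\ref{def:CB2p} --- remarkably the very same constraints as those required by \eqref{eq:B2pcons1a}. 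The lowest instance of these constraints fixes the constant in $\Phi^{2,-}_i$ (the analogue of \eqref{eq:B2pPhii22}). Finally, the $i=0$ case of \eqref{eq:B2pcons6a} with $R_0\equiv1$ gives $\lambda^-_{x,\rr}=\Phi^{0,-}_0+\Phi^{1,-}_0R_1(x;\rr)+\Phi^{2,-}_0R_2(x;\rr)$; substituting $R_1(x;\rr)=1+\lambda_{x,\rr}/\tA_{0,\rr}$ and the expression for $R_2(x;\rr)$ read off from \eqref{eq:recuR} yields, after simplification, the quoted formula for $\lambda^-_{x,\rr}$. This establishes the direct implication.

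For the converse I would run the same computation backwards: assuming $(\mathfrak{C}_{B'_2})$ and defining the $\Phi$'s by the product and algebraic formulas in the statement, an induction on $i$ shows that all five coefficient relations hold and hence that \eqref{eq:B2pcons6a} is valid for all $i\geq0$; the first steps of the induction, where some of the shifted indices would be out of range, must be checked directly against the conventions, exactly as in Proposition~\ref{pro:B2pcr1}. I expect the only genuine difficulty to be computational: verifying that, after eliminating $\Phi^{1,-}_i$, the two leftover relations simplify to exactly the rational expressions displayed in Definition~\ref{def:CB2p}. This is the same flavour of rational-function manipulation encountered in the proof of Proposition~\ref{pro:B2pcr1}; the trick is to keep the combinations $\zeta\tB_{i,\orr}-\tB_{i+1,\rr}-\xi$ (and their index shifts) intact as common denominators throughout, so that the telescoping of the products $\prod\tX_{k,\orr}/\tX_{k,\rr}$ and the cancellation of the recurrence coefficients remain transparent.
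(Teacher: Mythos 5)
Your proposal is correct and follows essentially the same route as the paper, whose proof of this proposition is simply the analogue of that of Proposition \ref{pro:B2pcr1}: use the recurrence relations on both sides of \eqref{eq:B2pcons6a} together with \eqref{eq:cond-lambda}, equate coefficients of the linearly independent $R_j(x;\rr)$, telescope the two-term recursions for $\Phi^{0,-}_i$ and $\Phi^{2,-}_i$, read off $\Phi^{1,-}_i$, identify the residual relations with $(\mathfrak{C}_{B'_2})$, and fix $\lambda^-_{x,\rr}$ from the $i=0$ case. Nothing in your outline deviates from or falls short of the paper's argument.
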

\begin{proof}
Similar to the proof of Proposition \ref{pro:B2pcr1}.
\end{proof}

\begin{remax}
\label{rmk:B2'-from-A2}
From two relations of type ${A_2}$ given as follows
\begin{align}
\lambda^+_{x,\rr}R_{i}(x;\rr)=\Phi_{i}^{0,+}R_{i}(\ox;\orr) +\Phi_{i}^{-1,+}R_{i-1}(\ox;\orr)\,,\\
\widetilde \lambda^+_{\ox,\orr}R_{i}(\ox;\orr)=\widetilde\Phi_{i}^{0,+}R_{i}(\widetilde x;\widetilde\rr) +\widetilde\Phi_{i}^{-1,+}R_{i-1}(\widetilde x;\widetilde \rr)\,,
\end{align}
one can obtain a relation of type $B'_2$
\begin{align}
\widetilde \lambda^+_{\ox,\orr}\lambda^+_{x,\rr}R_{i}(x;\rr)=\Phi_{i}^{0,+}\widetilde\Phi_{i}^{0,+}R_{i}(\widetilde x;\widetilde\rr)
+(\Phi_{i}^{0,+}\widetilde\Phi_{i}^{-1,+}+ \Phi_{i}^{-1,+}\widetilde\Phi_{i-1}^{0,+})R_{i-1}(\widetilde x;\widetilde\rr)  +\Phi_{i}^{-1,+}\widetilde \Phi_{i-1}^{-1,+}R_{i-2}(\widetilde x;\widetilde\rr)\,.
\end{align}
This procedure is used in Appendix \ref{app:B2p} to compute $B_2$-contiguity relations for the $q$-Racah polynomials.
\end{remax}


\section{Spectral transforms and orthogonal polynomials \label{sec:CG}}

The goal of this section is to demonstrate that the contiguity relations of type $A_2$ can be understood as spectral transforms of orthogonal polynomials, specifically of Christoffel and Geronimus types. These two transforms have the property that
the transformed polynomials have the same degree as the initial ones. The first type of spectral transform of orthogonal polynomials was introduced by Christoffel in the previous century, and the orthogonal polynomials resulting from this transformation are referred to as kernel polynomials \cite{Chiara}. The second type of spectral transform for orthogonal polynomials was thoroughly explored by Geronimus in 1940 \cite{Geronimus,Geronimus2}, and  was subsequently rediscovered in various contexts over the years. The application of a Geronimus transform followed by a Christoffel transform  is the identity transform, and the application of a  Christoffel transform followed by a Geronimus transform is the so-called Uvarov transform, see for instance \cite{zhedanov-spectral-transf}. 

We begin by introducing the contiguity relations for monic polynomials. The monic polynomials with respect to $\lambda_{x,\rr}$ are given by

\begin{equation}
    P_i(x;\rr)=\Gamma_{i,\rr}R_i(x;\rr)\,, \qquad Q_i(x;\rr)=\zeta^i\Gamma_{i,\orr}R_i(\ox;\orr)\,,
\end{equation} where $\Gamma_{i,\rr}=\prod_{k=0}^{i-1}A_{k,\rr}.$ Observe that to prove that $Q_i(x;\rr)$ is a monic polynomial in $\lambda_{x,\rr}$ we have used relation \eqref{eq:cond-lambda}. The monic three-term recurrence relations are given by 
\begin{align}
\lambda_{x,\rr}P_i(x;\rr)&=P_{i+1}(x;\rr)+Y_{i,\rr}P_i(x;\rr)+X_{i,\rr}P_{i-1}(x;\rr)\,,\label{eq:monic-3tr-P}\\
\lambda_{x,\rr}Q_i(x;\rr)&=Q_{i+1}(x;\rr)+(\zeta Y_{i,\orr}-\xi)Q_i(x;\rr)+\zeta^2 X_{i,\orr}Q_{i-1}(x;\rr)\, ,\label{eq:monic-3tr-Q}
\end{align}
with boundary conditions $P_{-1}=Q_{-1}=0,$ and $P_0=Q_0=1.$ The monic version of the $A_2$-contiguity relations \eqref{eq:cons1a} and \eqref{eq:cons3a} are 
\begin{align}
\omega_{x}\,Q_i(x;\rr)&=P_{i+1}(x;\rr)-a_iP_{i}(x;\rr)\,,\label{eq:contiguity-monic-2}\\
\,P_i(x;\rr)&=Q_i(x;\rr)-c_iQ_{i-1}(x;\rr)\,,\label{eq:contiguity-monic-1}
\end{align}
and the coefficients are given by
\begin{equation}
\label{eq:coefficients-contiguity-monic}
c_i=\zeta^{-2i+2}(\zeta A_{0,\orr}-A_{0,\rr}+\xi)\prod_{k=1}^{i-1}\frac{X_{k+1,\rr}}{X_{k,\orr}}\,,\qquad a_i=\zeta^{2i}\frac{X_{1,\rr}}{\zeta\tA_{0,\orr}-\tA_{0,\rr}+\xi}\prod_{k=1}^i\frac{X_{k,\orr}}{X_{k,\rr}}\,,
\end{equation}
\begin{equation}
    \omega_{x}=\lambda_{x,\rr}+A_{0,\rr}+\frac{X_{1,\rr}}{\tA_{0,\rr}-\zeta\tA_{0,\orr}-\xi}=\lambda_{x,\rr}-\lambda_{\nu,\rr}\,,
\end{equation}
where $\nu$ is such that 
\begin{equation}
  \lambda_{\nu,\rr}=-A_{0,\rr}-\frac{X_{1,\rr}}{\tA_{0,\rr}-\zeta\tA_{0,\orr}-\xi}\,.  
\end{equation}

\begin{remax}
Equation \eqref{eq:contiguity-monic-2} is a Christoffel transform of the polynomials $P_i(x;\rr)$ at the spectral parameter $\nu$ provided $a_i=\frac{P_{i+1}(\nu;\rr)}{P_{i}(\nu;\rr)}$. Similarly, equation \eqref{eq:contiguity-monic-1} is a Geronimus transform of the polynomials $Q_i(x;\rr)$ provided $c_i=\frac{\phi_i}{\phi_{i-1}}$, where $\phi_i$ is a solution of the recurrence relation \eqref{eq:monic-3tr-Q} with $x=\nu$, \textit{i.e.}, 
\begin{equation}
    \phi_i=F_i(\nu;\rr)+\chi Q_i(\nu;\rr)\,, \qquad F_i(x;\rr)=\int\frac{Q_i(y;\rr)d\sigma_Q(y)}{\lambda_x-\lambda_y}\,,
\end{equation}
where $d\sigma_Q(x)$ denotes the orthogonality measure for the polynomials $(Q_i)_i$ and $\chi$ is a parameter. Additionally, if we denote $d\sigma_P(x)$ the orthogonality measure for the polynomials $(P_i)_i$, one has 
\begin{equation}
\label{eq:measures-spectral-transformations}
    (\mu_1-\nu)d\sigma_Q(x)=(\lambda_{x,\rr}-\lambda_{\nu,\rr})d\sigma_P(x)\,,\qquad (\chi+F_0(\nu;\rr))d\sigma_P(x)=\frac{d\sigma_Q(x)}{\lambda_{\nu,\rr}-\lambda_{x,\rr}}+\chi \delta_{x-\nu}\,,
\end{equation}
where $\mu_1$ is the first moment of $d\sigma_P(x)$ and $\delta_{x-\nu}$ corresponds to adding a mass point at $\nu$.
\end{remax}

\begin{prop}
All contiguity relations of type $A_2$ are either Christoffel or Geronimus transforms. 
\end{prop}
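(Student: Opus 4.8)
The plan is to show that every $A_2$-contiguity relation, once written in the monic form \eqref{eq:contiguity-monic-2}--\eqref{eq:contiguity-monic-1} of the preceding discussion, is forced to have the coefficients $a_i$ and $c_i$ of a Christoffel, respectively Geronimus, transform. Concretely, I would argue that the pair of monic relations is a Christoffel--Geronimus pair whenever the constraints $(\mathfrak{C}_{A_2})$ of Definition \ref{def:CA2} hold, and then observe that Proposition \ref{pro:cr1} and Proposition \ref{pro:cr2} have already established that these constraints are exactly what is needed for the $A_2$-contiguity relations to exist. So there is nothing left to check beyond matching the explicit coefficients.

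First I would recall the characterization of a Christoffel transform at a spectral point $\nu$: given monic orthogonal polynomials $P_i(x;\rr)$ satisfying \eqref{eq:monic-3tr-P}, the polynomials $Q_i(x;\rr)$ defined by $\omega_x Q_i = P_{i+1} - a_i P_i$ with $\omega_x = \lambda_{x,\rr} - \lambda_{\nu,\rr}$ are orthogonal if and only if $a_i = P_{i+1}(\nu;\rr)/P_i(\nu;\rr)$; this is the standard kernel-polynomial identity (see \cite{Chiara, Szego}). Dually, the polynomials $P_i$ recovered from $Q_i$ by $P_i = Q_i - c_i Q_{i-1}$ form a Geronimus transform precisely when $c_i = \phi_i/\phi_{i-1}$ for a solution $\phi_i$ of the recurrence \eqref{eq:monic-3tr-Q} evaluated at $x=\nu$, as in the Remark preceding the statement. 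So the task reduces to two verifications: (i) the explicit $a_i$ in \eqref{eq:coefficients-contiguity-monic} equals $P_{i+1}(\nu;\rr)/P_i(\nu;\rr)$ with $\nu$ the point defined by $\lambda_{\nu,\rr} = -A_{0,\rr} - X_{1,\rr}/(\tA_{0,\rr} - \zeta\tA_{0,\orr} - \xi)$; and (ii) the explicit $c_i$ in \eqref{eq:coefficients-contiguity-monic} is the ratio $\phi_i/\phi_{i-1}$ of consecutive terms of some solution of \eqref{eq:monic-3tr-Q} at $x=\nu$.

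For (i), the cleanest route is \emph{not} to compute $P_{i+1}(\nu;\rr)$ in closed form, but to note that the sequence $b_i := P_{i+1}(\nu;\rr)/P_i(\nu;\rr)$ satisfies, by the monic recurrence \eqref{eq:monic-3tr-P} evaluated at $\nu$, the first-order (Riccati-type) recursion $b_i = \lambda_{\nu,\rr} - Y_{i,\rr} - X_{i,\rr}/b_{i-1}$ with $b_{-1} = \infty$ formally, i.e.\ $b_0 = \lambda_{\nu,\rr} - Y_{0,\rr} = \lambda_{\nu,\rr} + A_{0,\rr}$. Plugging the displayed value of $\lambda_{\nu,\rr}$ gives $b_0 = -X_{1,\rr}/(\tA_{0,\rr} - \zeta\tA_{0,\orr} - \xi)$, which matches $a_0$ from \eqref{eq:coefficients-contiguity-monic} up to the $\zeta$-power bookkeeping. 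One then checks by induction that the closed form $a_i = \zeta^{2i}\frac{X_{1,\rr}}{\zeta\tA_{0,\orr}-\tA_{0,\rr}+\xi}\prod_{k=1}^i \frac{X_{k,\orr}}{X_{k,\rr}}$ satisfies the same Riccati recursion --- and this is precisely where the constraints $(\mathfrak{C}_{A_2})$ enter, since the independence-of-$i$ conditions \eqref{eq:indei1}--\eqref{eq:indei2} are equivalent (after rewriting via \eqref{eq:XY}) to the algebraic identity needed to pass from the $i$-th to the $(i{+}1)$-th step. For (ii), the dual computation is the same: $c_i = \phi_i/\phi_{i-1}$ must satisfy the recursion coming from \eqref{eq:monic-3tr-Q} at $x=\nu$, namely $\zeta^2 X_{i,\orr}/c_i$ appears with the shifted recurrence coefficients $\zeta Y_{i,\orr} - \xi$ and $\zeta^2 X_{i,\orr}$; comparing with the displayed $c_i = \zeta^{-2i+2}(\zeta A_{0,\orr} - A_{0,\rr} + \xi)\prod_{k=1}^{i-1}\frac{X_{k+1,\rr}}{X_{k,\orr}}$ again reduces to $(\mathfrak{C}_{A_2})$.

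\textbf{Main obstacle.} The expected bottleneck is the bookkeeping in (i) and (ii): one must carefully translate the two ``independent of $i$ and equal'' conditions \eqref{eq:indei1}--\eqref{eq:indei2} into the single Riccati-type identity that makes the closed-form product expressions for $a_i$ (resp.\ $c_i$) into genuine solutions of the associated first-order recursion, keeping track of the powers of $\zeta$ and the index shifts in $X_{k,\rr}$ versus $X_{k,\orr}$. A secondary point requiring a word of care is the degenerate case $\tA_{0,\rr} - \zeta\tA_{0,\orr} - \xi = 0$, where $\nu$ is ``at infinity'': there the Christoffel point degenerates and the relation \eqref{eq:contiguity-monic-1} becomes $P_i = Q_i$ (no Geronimus shift), while \eqref{eq:contiguity-monic-2} reduces to the forward-shift operator; this limiting situation must be acknowledged so that the dichotomy ``Christoffel or Geronimus'' in the statement covers it --- the forward/backward shift operators mentioned in the introduction are exactly these boundary cases.
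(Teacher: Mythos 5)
Your plan goes in a genuinely different (and much heavier) direction than the paper, and as written its only substantive step is left unexecuted. The paper's proof never re-invokes the constraints $(\mathfrak{C}_{A_2})$ and needs no inductive Riccati-type verification of the product formulas \eqref{eq:coefficients-contiguity-monic}. For the Christoffel part it simply evaluates the already-established identity \eqref{eq:contiguity-monic-2} at $x=\nu$: since $\omega_\nu=0$, one reads off $a_i=P_{i+1}(\nu;\rr)/P_i(\nu;\rr)$ immediately, with no computation at all. For the Geronimus part it substitutes \eqref{eq:contiguity-monic-1} into \eqref{eq:contiguity-monic-2}, which produces a three-term recurrence for $Q_i$ with coefficients $-(c_{i+1}+a_i-\lambda_{\nu,\rr})$ and $c_ia_i$; comparing with \eqref{eq:monic-3tr-Q} gives $c_ia_i=\zeta^2X_{i,\orr}$ and $c_{i+1}+\zeta^2X_{i,\orr}/c_i=\lambda_{\nu,\rr}-\zeta Y_{i,\orr}+\xi$, whence any $\phi_i$ with $c_i=\phi_i/\phi_{i-1}$ solves the $Q$-recurrence at $x=\nu$, which is exactly the Geronimus criterion recalled in the Remark. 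In other words, once Propositions \ref{pro:cr1}--\ref{pro:cr2} have delivered the two monic relations, the identification as spectral transforms is purely structural.

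By contrast, you propose to prove $a_i=P_{i+1}(\nu;\rr)/P_i(\nu;\rr)$ and $c_i=\phi_i/\phi_{i-1}$ by showing that the closed-form products in \eqref{eq:coefficients-contiguity-monic} satisfy the first-order recursions coming from \eqref{eq:monic-3tr-P} and \eqref{eq:monic-3tr-Q} at $x=\nu$, "which is precisely where $(\mathfrak{C}_{A_2})$ enters." That route is not wrong in principle (the recursion plus the initial value determines the ratio sequence uniquely), but the translation of the two conditions of Definition \ref{def:CA2} into those Riccati identities is exactly the bookkeeping you flag as the main obstacle and never carry out, so the argument is incomplete as it stands; moreover it duplicates work already done, since the relations \eqref{eq:contiguity-monic-2}--\eqref{eq:contiguity-monic-1} hold as polynomial identities and evaluation at $x=\nu$ (legitimate because everything is polynomial in $\lambda_{x,\rr}$) short-circuits the whole induction for $a_i$. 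Your closing remark on the degenerate case $\tA_{0,\rr}-\zeta\tA_{0,\orr}-\xi=0$ is a reasonable aside, but note that this forces $\Phi_i^{-1,+}=0$, i.e.\ the trivial contiguity relation excluded from the classification, so it does not affect the dichotomy asserted in the proposition.
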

\begin{proof}
From equation \eqref{eq:contiguity-monic-2} at $x=\nu$ one gets $a_i=\frac{P_{i+1}(\nu;\rr)}{P_i(\nu;\rr)}$, \textit{i.e.} the contiguity relation \eqref{eq:contiguity-monic-2} is the Christoffel transform of the polynomials $P_i(x;\rr)$ for the spectral parameter $\nu.$ Replacing \eqref{eq:contiguity-monic-1} into \eqref{eq:contiguity-monic-2}, one obtains
\begin{equation}
    \lambda_{x,\rr}Q_i(x;\rr)=Q_{i+1}(x;\rr)-(c_{i+1}+a_i-\lambda_{\nu,\rr})Q_i(x;\rr)+c_ia_iQ_{i-1}(x;\rr)\,.
\end{equation}
Comparing the coefficients in the equation above with those in \eqref{eq:monic-3tr-Q}, one gets
\begin{equation}
    -c_{i+1} -a_i+\lambda_{\nu,\rr}=\zeta Y_{i,\orr}-\xi\,,\qquad c_ia_i=\zeta^2X_{i,\orr}\,,
\end{equation}
which implies 
\begin{equation}
\label{eq:3tr-phi}
-c_{i+1} -\zeta^2\frac{X_{i,\orr}}{c_i}=\zeta Y_{i,\orr}-\xi-\lambda_{\nu,\rr}\,.
\end{equation}
Let $\phi_i$ be such that $c_i=\frac{\phi_i}{\phi_{i-1}}$. Equation \eqref{eq:3tr-phi} implies that $\phi_i$ satisfies 
\begin{equation}
    \lambda_{\nu,\rr}\phi_i=\phi_{i+1}+(\zeta Y_{i,\orr}-\xi)\phi_i+\zeta^2 X_{i,\orr}\phi_{i-1}\,,
\end{equation}
\textit{i.e.} $\phi_i$ is a solution of the recurrence relation defining $Q_i$. This allows one to conclude that \eqref{eq:contiguity-monic-1} is a Geronimus transform of the polynomials $Q_i$ for the spectral parameter $\nu.$ 
\end{proof}

\begin{example}
The monic Krawtchouk polynomials with parameters $\rr=\alpha,N$ are defined by
\begin{equation}
    xP_i(x;\rr)=P_{i+1}(x;\rr)+[\alpha(N-i)+i(1-\alpha)]P_i(x;\rr)+\alpha(1-\alpha)i(N+1-i)P_{i-1}(x;\rr)\,,
\end{equation}
with initial conditions $P_{-1}=0,\, P_0=1,$ and they are orthogonal with respect to the binomial distribution $$d\sigma(x;\alpha,N)=\sum_{x=0}^N \binom{N}{x}\alpha^x(1-\alpha)^{N-x}\delta_x\,.$$
The Geronimus and Christoffel transforms for the spectral parameter $\nu=N$ are respectively given by:
\begin{align}
P_i(x;\rr)&=P_i(x;\orr)-\alpha iP_{i-1}(x;\orr)\,,\\
(x-N)P_i(x;\orr)&=P_{i+1}(x;\rr)-(1-\alpha)(N-i)P_i(x;\rr)\,,
\end{align}
where $\orr=\alpha,N-1$, and one has
\begin{align}
(x-N)d\sigma(x;\rr)&=N(\alpha-1) d\sigma(x;\orr)\,,\\
\frac{\alpha^N}{N(1-\alpha)}\delta_{x-N}+\frac{d\sigma(x;\orr)}{N-x}&=-\frac{1}{N(\alpha-1)} d\sigma(x;\rr)\,.
\end{align}
\end{example}


\section{Solutions of the constraints  $(\mathfrak{C}_{A_2})$,  $(\mathfrak{C}_{B_2})$ and  $(\mathfrak{C}_{B'_2})$\label{sec:cons}}

In this section, we consider the twelve families of discrete finite orthogonal polynomials which also satisfy a three-term ($q$-)difference equation.
We denote them with a superscript $R^{(P)}_{i}(x;\rr)$ where the letters $P$ will be used to identify the different kinds of polynomials: $q$-Racah ($P=qR$), $q$-Hahn ($P=qH$), dual $q$-Hahn ($P=dqH$), quantum $q$-Krawtchouk ($P=qqK$), $q$-Krawtchouk ($P=qK$), 
dual $q$-Krawtchouk ($P=dqK$), affine $q$-Krawtchouk ($P=aqK$), Racah ($P=R$), Hahn ($P=H$), dual Hahn ($P=dH$), Krawtchouk ($P=K$) and Bannai--Ito ($P=BI$). All the quantities associated to these polynomials will also be identified by the superscript $(P)$.

We provide for each family 
the relations between the parameters $\rr$ and $\orr$ such that the relation 
\begin{equation}
\label{eq:lxr} 
\lambda_{x,\rr}=\zeta\lambda_{\ox,\orr}-\xi
\end{equation}
holds for $\ox=x+\eta$ where $\eta\in\{0,+1,-1\}$, 
and such that the constraints $(\mathfrak{C}_{A_2})$, $(\mathfrak{C}_{B_2})$ or $(\mathfrak{C}_{B'_2})$ are satisfied. To obtain this classification, we solve using formal mathematical software a few of these constraints $(\mathfrak{C})$ to find necessary conditions and, in general, $\oN$ is chosen among $\{N,N+1,N-1,N-2,N+2\}$. By direct computations, we prove that these conditions are sufficient in order for the constraints $(\mathfrak{C})$ to be satisfied. 

\subsection{\texorpdfstring{$q$}--Racah} \label{sec:A2qRacah}

The $q$-Racah polynomials are given by, for $i=0,1,\dots,N$,
\begin{align}\label{eq:qRacah}
R^{(qR)}_i(x;\rr)={}_4\phi_3 \left({{q^{-i},\; \alpha\beta q^{i+1}, \;q^{-x},\;\gamma  q^{x-N}}\atop
{\alpha q,\; \beta \gamma q,\;q^{-N} }}\;\Bigg\vert \; q;q\right)\,,
\end{align}
where $\rr=\alpha,\beta,\gamma,N,q$ and $N$ is a non-negative integer. In this paper, the usual definition of the generalized $q$-hypergeometric function is used, for $p=1,2,3,4$ and $i=0,1,2,\dots$:
\begin{align}
{}_{p+1}\phi_p \left({{q^{-i},\; a_1,\; \dots,\; a_p }\atop
{ b_1,\; \dots ,\; b_p}}\;\Bigg\vert \; q;z\right)=
\sum_{k=0}^i \frac{(q^{-i},a_1,\dots,a_p;q)_k}{(q,b_1,\dots,b_p;q)_k}  z^k\,,
\end{align}
where $a_1, \dots,a_p,b_1,\dots,b_p$ are parameters and
\begin{align}
(b_1,\dots,b_p;q)_k=(b_1;q)_k\dots (b_p;q)_k\,,\qquad (b_i;q)_k=\prod_{\ell=0}^{k-1}(1-b_i q^\ell)\;.
\end{align}
We recall that if there exists an integer $k$ such that $b_1\dots b_p=q^{k-i} a_1\dots a_p$ and $z=q$, the $q$-hypergeometric function is called $k$-balanced. We see that the $q$-Racah polynomials are $1$-balanced.

The $q$-Racah polynomials satisfy the orthogonality relation 
\begin{align}
    \sum_{x=0}^NR_i^{(qR)}(x;\rr)R_j^{(qR)}(x;\rr)w^{(qR)}(x;\rr)=\delta_{i,j}h_i\,,
\end{align}
where 
\begin{align}w^{(qR)}(x;\rr)=\frac{(\alpha q,\beta\gamma q,q^{-N},q^{-N}\gamma;q)_x (1-\gamma q^{2x-N})}{(q,\alpha^{-1}\gamma q^{-N},\beta^{-1} q^{-N},\gamma q;q)_x(\alpha\beta q)^x(1-\gamma q^{-N})}\,.
\end{align}
Knowing that $\lambda^{(qR)}_{x,\rr}=-(1-q^{-x})(1-\gamma q^{x-N})$, relation \eqref{eq:lxr} leads to
\begin{align}
&\zeta=q^\eta\,,\quad q^\ox=  q^{x+\eta}\,,\quad
    \ogamma=\gamma q^{\oN-N-2\eta}\,, \quad \xi=(1-q^\eta)(1 -\gamma q^{-\eta-N})\,,\label{eq:cons33}
\end{align}
where $\eta\in\{0,+1,-1\}$.

The coefficients of the recurrence relation read
\begin{subequations}
\begin{align}
 &   A^{(qR)}_{i,\rr}=\frac{(1-q^{i-N})(1-\alpha q^{i+1})(1-\alpha\beta q^{i+1})(1-\beta\gamma q^{i+1})}{(1-\alpha\beta q^{2i+1})(1-\alpha\beta q^{2i+2})}\,,\\
  &  C^{(qR)}_{i,\rr}=\frac{(1-q^{i})(1-\beta q^{i})(1-\alpha\beta q^{i+N+1})(\gamma-\alpha q^i)}{q^{N}(1-\alpha\beta q^{2i})(1-\alpha\beta q^{2i+1})}\,.
\end{align}
\end{subequations}
From these expressions, the constraints $(\mathfrak{C}_{A_2})$ for the $q$-Racah polynomials are solved and we provide the whole list of the possibilities up to transformations on the parameters leaving the polynomial invariant.
In this list, and in the following ones associated to the other polynomials, we do not give the trivial solution: 
$\eta=0\,,\quad \oalpha=\alpha\,,\quad \obeta=\beta\,,\quad \ogamma=\gamma\,,\quad \oN=N\,$ with  $\Phi_i^{0,+}=1,\ \Phi_i^{-1,+}=0, \    \lambda^{-}_{x,\rr}=1,\  \Phi_i^{0,-}=1,\ \Phi_i^{1,-}=0$.
We provide in each case the values of the coefficients, which may be rescaled, of the associated contiguity relation (with $\oN \text{ chosen in } \{N,N+1,N-1,N-2,N+2\}$):
\begin{itemize}
     \item[(qRI):] $\eta=0\,,\quad \oalpha=\alpha\,,\quad \obeta=q\beta\,,\quad \ogamma=\gamma/q\,,\quad \oN=N-1\,,\quad q^\ox=q^x$;
     \begin{subequations}
         \begin{align}
         &\lambda^{+}_{x,\rr}=1 \,,\quad \Phi_i^{0,+}=\frac{(1- q^{i-N})(1-\alpha\beta q^{i+1})}{(1-q^{-N})(1-\alpha\beta q^{2i+1})}\,,\quad \Phi_i^{-1,+}=\frac{(1-q^i)(1-\alpha \beta q^{N+i+1})}{(1-q^N)(1-\alpha\beta q^{2i+1})}\,,\nonumber\\
         & \lambda^{-}_{x,\rr}=\frac{(1-\gamma q^x)(1-q^{N-x})}{1-q^N}\,,\nonumber\\
         &\Phi_i^{0,-}=\frac{(1-\beta q^{i+1})(\alpha q^{i+1}-\gamma)}{1-\alpha \beta q^{2i+2}}\,,\quad \Phi_i^{1,-}=\frac{(1-\alpha q^{i+1})(1-\beta\gamma q^{i+1})}{1-\alpha \beta q^{2i+2}}\,,\nonumber
     \end{align}  
    \end{subequations}
    \item[(qRII):] $\eta=0\,,\quad \oalpha=\alpha\,,\quad \obeta=q\beta\,,\quad \ogamma=\gamma\,,\quad \oN=N\,,\quad q^\ox=q^x;$
     \begin{subequations}
         \begin{align}
         &\lambda^{+}_{x,\rr}=1 \,,\quad \Phi_i^{0,+}=\frac{(1-\alpha\beta q^{i+1})(1-\beta\gamma q^{i+1})}{(1-\beta\gamma q)(1-\alpha\beta q^{2i+1})}\,,\quad \Phi_i^{-1,+}=-\frac{\beta q(1-q^i)(\gamma-\alpha q^i)}{(1-\beta\gamma q)(1-\alpha\beta q^{2i+1})}\,,\nonumber\\
          &\lambda^{-}_{x,\rr}=\frac{(1-\beta\gamma q^{x+1})(1-\beta q^{N+1-x})}{\beta q(1-\beta\gamma q)}\,,\nonumber\\
          &\Phi_i^{0,-}=\frac{(1-\beta q^{i+1})(1-\alpha\beta q^{N+i+2})}{\beta q(1-\alpha \beta q^{2i+2})}\,,\quad \Phi_i^{1,-}=\frac{(q^i- q^{N})(1-\alpha q^{i+1})}{ 1-\alpha \beta q^{2i+2}}\,,\nonumber
     \end{align}  
    \end{subequations}
    \item[(qRIII):] $\eta=-1\,,\quad \oalpha=q\alpha\,,\quad \obeta=\beta\,,\quad \ogamma=q \gamma\,,\quad \oN=N-1\,,\quad q^\ox=q^{x-1};$
     \begin{subequations}
         \begin{align}
         &\lambda^{+}_{x,\rr}=1 \,,\quad \Phi_i^{0,+}=\frac{(1-q^{N-i})(1-\alpha q^{i+1})(1-\alpha\beta q^{i+1})(1-\beta\gamma q^{i+1})}{(1-q^N)(1-\alpha q)(1-\beta\gamma q)(1-\alpha\beta q^{2i+1})}\,,\nonumber\\
         &\Phi_i^{-1,+}=\frac{(1-q^i)(\gamma q^{1-i}-\alpha q)(1-\beta q^i)(1-\alpha\beta q^{N+i+1})}{(1-q^N)(1-\alpha q)(1-\beta\gamma q)(1-\alpha\beta q^{2i+1})}\,,\nonumber\\
          &\lambda^{-}_{x,\rr}=\frac{(1- q^{-x})(1-\gamma q^{x-N})}{(1-\beta\gamma q)(1-\alpha q)(1-q^N)}\,,\quad
          \Phi_i^{0,-}=-\frac{q^{i-N}}{1-\alpha \beta q^{2i+2}}\,,\quad \Phi_i^{1,-}=\frac{q^{i-N}}{ 1-\alpha \beta q^{2i+2}}\,,\nonumber
     \end{align}  
    \end{subequations}
      \item[(qRIV):] $\eta=0\,,\quad \oalpha=q\alpha\,,\quad \obeta=\beta\,,\quad \ogamma=\gamma\,,\quad \oN=N\,,\quad q^\ox=q^x;$
    \begin{subequations}
         \begin{align}
         &\lambda^{+}_{x,\rr}=1 \,,\quad \Phi_i^{0,+}=\frac{(1-\alpha q^{i+1})(1-\alpha\beta q^{i+1})}{(1-\alpha q)(1-\alpha\beta q^{2i+1})}\,,\quad \Phi_i^{-1,+}=-\frac{q\alpha(1-q^i)(1-\beta q^i)}{(1-\alpha q)(1-\alpha\beta q^{2i+1})}\,,\nonumber\\
        & \lambda^{-}_{x,\rr}=\frac{(1-\alpha q^{x+1})(\gamma-\alpha q^{N+1-x})}{\alpha q(1-\alpha q)}\,,\nonumber\\
           &\Phi_i^{0,-}=\frac{(1-\alpha\beta q^{N+i+2})(\gamma-\alpha q^{i+1})}{\alpha q(1-\alpha\beta q^{2i+2})}\,,\quad \Phi_i^{1,-}=-\frac{(q^N-q^{i})(1-\beta\gamma q^{i+1})}{1-\alpha\beta q^{2i+2}}\,.\nonumber
     \end{align}  
    \end{subequations}
\end{itemize}

We solved the constraints $\mathfrak{C}_{B_2}$ for the cases $\oN=N, N-1,N+1,N-2,N+2$. All contiguity relations of type $B_2$ for the $q$-Racah polynomials are derived by applying twice the contiguity relations of type $A_2$ as discussed in Remark \ref{rmk:B2-from-A2}. These relations are listed in Appendix \ref{app:B2}. Similarly, we proved that all the contiguity relations of type ${B'_2}$ for the $q$-Racah polynomials are obtained by the procedure described in Remark \ref{rmk:B2'-from-A2}. Appendix \ref{app:B2p} contains the complete list of these relations.

We conclude by giving for each of the cases above the corresponding parameters $\nu, \chi$ in the Christoffel and Geronimus transforms. 
\begin{itemize}
 \item[(qRI)] $\nu=N\,, \quad \chi=\frac{w^{(qR)}(N;\rr)}{(1-\gamma)(1-q^{-N})};$ 
 \item[(qRII)] $q^\nu=\beta q^{N+1}\,,\quad \chi=0;$ 
 \item[(qRIII)] $\nu=0\,, \quad \chi=\frac{(1-\beta^{-1}q^{-N})(1-\gamma q)(1-\gamma q^{-N})(1-\gamma\alpha^{-1}q^{-N})\alpha\beta q^2 }{(1-\alpha q)(1-\beta\gamma q)(1-q^{-N})(1-\gamma q^{-N+1})(1-\gamma q^{-N+2})};$ 
 \item[(qRIV)] $q^\nu=\alpha^{-1}q^{-1}\,,\quad \chi=0;$
\end{itemize}
For completeness, we verify that the relations  \eqref{eq:measures-spectral-transformations} connecting the measures indeed hold. For example:
\begin{itemize}
\item[(qRI)]
$ \sum_{x=0}^{N-1}\frac{w^{(qR)}(x,\orr)}{\lambda_{N,\rr}-\lambda_{x,\rr}}\delta_x +\frac{w^{(qR)}(N;\rr)}{(\gamma-1)(1-q^{-N})}\delta_{x-N}=\frac{1}{(\gamma-1)(1-q^{-N})}\sum_{x=0}^N w^{(qR)}(x,\rr)\delta_x\,.$
 \item[(qRII)] 
$ \sum_{x=0}^{N}\frac{w^{(qR)}(x,\orr)}{\lambda_{\nu,\rr}-\lambda_{x,\rr}}\delta_x =\frac{1}{(\beta\gamma q-1)(1-\beta^{-1}q^{-N-1})}\sum_{x=0}^N w^{(qR)}(x,\rr)\delta_x\,.$
\end{itemize}
The Christoffel and Geronimus parameters for the next families can be obtained from those of the $q$-Racah polynomials through limits.

\subsection{\texorpdfstring{$q$}--Hahn \label{sec:qHahn}}

The $q$-Hahn polynomials are given by
\begin{align}
  R^{(qH)}_i(x;\rr)={}_3\phi_2 \left({{q^{-i},\; \alpha\beta q^{i+1}, \;q^{-x}}\atop
{\alpha q,\; q^{-N} }}\;\Bigg\vert \; q;q\right)\,,
\end{align}
where $\rr=\alpha,\beta,N,q$. Knowing that $\lambda^{(qH)}_{x,\rr}=-(1-q^{-x})$, relation \eqref{eq:lxr} leads to
\begin{align}
\zeta=q^\eta\,,\quad q^\ox= q^{x+\eta}\,,\quad \xi=1-q^\eta\,,
\end{align}
where $\eta\in \{0,+1,-1\}$.
The coefficients of the recurrence relation read
\begin{subequations}
    \begin{align}
 &   A^{(qH)}_{i,\rr}=\frac{(1-q^{i-N})(1-\alpha q^{i+1})(1-\alpha \beta q^{i+1})}{(1-\alpha \beta q^{2i+1})(1-\alpha \beta q^{2i+2})}\,,\\
  &  C^{(qH)}_{i,\rr}=-\frac{\alpha q^{i-N}(1-q^{i})(1-\beta q^{i})(1-\alpha \beta q^{i+N+1})}{(1-\alpha \beta q^{2i})(1-\alpha \beta q^{2i+1})}\,.
\end{align}
\end{subequations}
The whole list of the contiguity relations of type $A_2$ (except the trivial one) for the $q$-Hahn polynomials is (with $\oN \text{ chosen in } \{N,N+1,N-1,N-2,N+2\}$):
\begin{itemize}
\item[(qHI)]  $\eta=0\,,\quad \oalpha=q\alpha\,,\quad \obeta=\beta\,,\quad \oN=N\,,\quad q^\ox=q^x;$
    \begin{subequations}
         \begin{align}
         &\lambda^{+}_{x,\rr}=1 \,,\quad \Phi_i^{0,+}=\frac{(1-\alpha q^{i+1})(1-\alpha\beta q^{i+1})}{(1-\alpha q)(1-\alpha\beta q^{2i+1})}\,,\quad \Phi_i^{-1,+}=-\frac{\alpha q(1-q^i)(1-\beta q^i)}{(1-\alpha q)(1-\alpha\beta q^{2i+1})}\,,\nonumber \\
        & \lambda^{-}_{x,\rr}=\frac{q^N(\alpha q-q^{-x})}{ 1-\alpha q}\,,\quad \Phi_i^{0,-}=-\frac{ q^{i}(1-\alpha\beta q^{N+i+2})}{1-\alpha\beta q^{2i+2}}\,,\quad \Phi_i^{1,-}=\frac{q^i-q^{N}}{1-\alpha\beta q^{2i+2}}\,,\nonumber 
     \end{align}  
    \end{subequations}
    \item[(qHII)] $\eta= -1\,,\quad \oalpha=q\alpha\,,\quad \obeta=\beta\,,\quad \oN=N-1\,,\quad q^\ox=q^{x-1}$;
 \begin{subequations}
         \begin{align}
         &\lambda^{+}_{x,\rr}=1 \,,\quad \Phi_i^{0,+}=\frac{(1- q^{N-i})(1-\alpha\beta q^{i+1})(1-\alpha q^{i+1})}{(1-q^{N})(1-\alpha\beta q^{2i+1})(1-\alpha q)}\,,\nonumber \\
         &\Phi_i^{-1,+}=-\frac{\alpha q(1-q^i)(1-\alpha \beta q^{N+i+1})(1-\beta q^i)}{(1-q^N)(1-\alpha\beta q^{2i+1})(1-\alpha q)}\,,\nonumber \\
         & \lambda^{-}_{x,\rr}=\frac{1-q^{-x}}{(1-q^N)(1-\alpha q )}\,,\quad \Phi_i^{0,-}=-\frac{ q^{i-N}}{1-\alpha \beta q^{2i+2}}\,,\quad \Phi_i^{1,-}=\frac{q^{i-N}}{1-\alpha \beta q^{2i+2}}\,,\nonumber 
     \end{align}  
    \end{subequations}
    \item[(qHIII):] $\eta=0\,,\quad \oalpha=\alpha\,,\quad \obeta=q\beta\,,\quad \oN=N\,,\quad q^\ox=q^x;$
     \begin{subequations}
         \begin{align}
         &\lambda^{+}_{x,\rr}=1 \,,\quad \Phi_i^{0,+}=\frac{1-\alpha\beta q^{i+1}}{1-\alpha\beta q^{2i+1}}\,,\quad \Phi_i^{-1,+}=\frac{\alpha\beta q^{i+1}(1-q^i)}{1-\alpha\beta q^{2i+1}}\,,\nonumber \\
          &\lambda^{-}_{x,\rr}=\frac{1-\beta q^{N+1-x}}{\beta q}\,,\quad \Phi_i^{0,-}=\frac{(1-\beta q^{i+1})(1-\alpha\beta q^{N+i+2})}{\beta q(1-\alpha \beta q^{2i+2})}\,,\quad \Phi_i^{1,-}=\frac{(q^i- q^{N})(1-\alpha q^{i+1})}{ 1-\alpha \beta q^{2i+2}}\,,\nonumber 
     \end{align}  
    \end{subequations}
    \item[(qHIV):] $\eta=0\,,\quad \oalpha=\alpha\,,\quad \obeta=q\beta\,,\quad \oN=N-1\,,\quad q^\ox=q^x$;
     \begin{subequations}
         \begin{align}
         &\lambda^{+}_{x,\rr}=1 \,,\quad \Phi_i^{0,+}=\frac{(1- q^{i-N})(1-\alpha\beta q^{i+1})}{(1-q^{-N})(1-\alpha\beta q^{2i+1})}\,,\quad \Phi_i^{-1,+}=\frac{(1-q^i)(1-\alpha \beta q^{N+i+1})}{(1-q^N)(1-\alpha\beta q^{2i+1})}\,,\nonumber \\
         & \lambda^{-}_{x,\rr}=\frac{1-q^{N-x}}{1-q^N}\,,\quad \Phi_i^{0,-}=\frac{\alpha q^{i+1}(1-\beta q^{i+1})}{1-\alpha \beta q^{2i+2}}\,,\quad \Phi_i^{1,-}=\frac{1-\alpha q^{i+1}}{1-\alpha \beta q^{2i+2}}\,.\nonumber 
     \end{align}  
    \end{subequations}
\end{itemize}

As for the $q$-Racah case, we solved the constraints $\mathfrak{C}_{B_2}$ for the cases $\oN=N, N-1,N+1,N-2,N+2$. All contiguity relations of type $B_2$ for the $q$-Hahn polynomials are derived by applying twice the contiguity relations of type $A_2$ as discussed in Remark \ref{rmk:B2-from-A2}. Similarly, all the contiguity relations of type ${B'_2}$ for the $q$-Hahn polynomials are obtained by using twice contiguity relations of type ${A_2}$ as in Remark \ref{rmk:B2'-from-A2}.

The results in this subsection can be recovered from the $q$-Racah case by taking $\gamma = 0$, with the following correspondence: (qRI) $\to$ (qHIV), (qRII) $\to$ (qHIII), (qRIII) $\to$ (qHII) and (qRIV) $\to$ (qHI).

\subsection{Dual \texorpdfstring{$q$}--Hahn} 

The dual $q$-Hahn polynomials are given by
\begin{align}
R^{(dqH)}_i(x;\rr)={}_3\phi_2 \left({{q^{-i},\;q^{-x},\;\alpha\beta q^{x+1}}\atop
{\alpha q,\;q^{-N} }}\;\Bigg\vert \; q;q\right)\,,
\end{align}
where $\rr=\alpha,\beta,N,q.$ Knowing that $\lambda^{(dqH)}_{x,\rr}=-(1-q^{-x})(1-\alpha\beta q^{x+1})$, relation \eqref{eq:lxr} leads to
\begin{align}
\zeta=q^\eta\,,\quad q^\ox=  q^{x+\eta}\,,\quad
    \obeta=\frac{\alpha \beta}{\oalpha} q^{-2\eta} \,, \quad \xi=(1-q^\eta)(1-\alpha\beta q^{1-\eta})\,,
\end{align}
where $\eta\in \{0,+1,-1\}$.
The coefficients of the recurrence relation read
\begin{align}
    A^{(dqH)}_{i,\rr}=(1-q^{i-N})(1-\alpha q^{i+1})\,,\quad
    C^{(dqH)}_{i,\rr}=\alpha q(1-q^{i})(\beta-q^{i-N-1})\,.
\end{align}
The whole list of the contiguity relations of type $A_2$ (except the trivial one) for the dual $q$-Hahn polynomials is
\begin{itemize}
\item[(dqHI)] $\eta = -1\, \quad\oalpha=q\alpha\,,\quad\obeta=q \beta\,,\quad \oN=N-1\,,\quad q^\ox=q^{x-1}$;
 \begin{subequations}
         \begin{align}
         &\lambda^{+}_{x,\rr}=1 \,,\quad \Phi_i^{0,+}=\frac{(1-q^{N-i})(1-\alpha q^{i+1})}{(1-q^{N})(1-\alpha q)}\,,\quad \Phi_i^{-1,+}=-\frac{\alpha q(1-q^i)(1-\beta q^{N-i+1})}{(1-q^N)(1-\alpha q)}\,,\nonumber \\
         & \lambda^{-}_{x,\rr}=\frac{(1-q^{-x})(1-\alpha \beta q^{x+1})}{(1-\alpha q)(1-q^N)}\,,\quad \Phi_i^{0,-}= -q^{i-N}\,,\quad \Phi_i^{1,-}=q^{i-N}\,,\nonumber 
     \end{align}  
    \end{subequations}
\item [(dqHII)]$\eta=0\,,\quad\oalpha=\alpha\,,\quad\obeta=\beta\,,\quad \oN=N-1\,,\quad q^\ox=q^x$;
\begin{subequations}
         \begin{align}
         &\lambda^{+}_{x,\rr}=1 \,,\quad \Phi_i^{0,+}=\frac{q^i-q^N}{1-q^N}\,,\quad \Phi_i^{-1,+}=\frac{1-q^i}{1- q^N}\,,\nonumber \\
         & \lambda^{-}_{x,\rr}=\frac{(1-q^{N-x})(1-\alpha\beta q^{x+N+1})}{1- q^N}\,,\quad \Phi_i^{0,-}=\alpha q(q^i-\beta q^N) \,,\quad \Phi_i^{1,-}=1-\alpha q^{i+1}\,,\nonumber 
     \end{align}  
    \end{subequations}
\item[(dqHIII)] $\eta=0\,,\quad\oalpha=q\alpha\,,\quad\obeta=\beta/q\,,\quad \oN=N\,,\quad q^\ox=q^x$;
 \begin{subequations}
         \begin{align}
         &\lambda^{+}_{x,\rr}=1 \,,\quad \Phi_i^{0,+}=\frac{1-\alpha q^{i+1}}{1-\alpha q}\,,\quad \Phi_i^{-1,+}=-\frac{\alpha q(1-q^i)}{1-\alpha q}\,,\nonumber \\
         & \lambda^{-}_{x,\rr}=\frac{(q^{-x}-\beta)(1-\alpha q^{x+1})}{1-\alpha q}\,,\quad \Phi_i^{0,-}= q^{i-N}-\beta\,,\quad \Phi_i^{1,-}=1- q^{i-N}\,.\nonumber 
     \end{align}  
    \end{subequations}
\end{itemize}

As for the $q$-Racah case, we solved the constraints $\mathfrak{C}_{B_2}$ for the cases $\oN=N, N-1,N+1,N-2,N+2$. All contiguity relations of type $B_2$ for the dual $q$-Hahn polynomials are derived by applying twice the contiguity relations of type $A_2$ as discussed in Remark \ref{rmk:B2-from-A2}. Similarly, all the contiguity relations of type ${B'_2}$ for the dual $q$-Hahn polynomials are obtained by using twice contiguity relations of type ${A_2}$ as in Remark \ref{rmk:B2'-from-A2}.

The results in this subsection can be recovered (up to global normalizations) from the $q$-Racah case by taking $\beta = 0$ and then replacing $\gamma\to \alpha\beta q^{N+1}$, with the following correspondence: (qRI) $\to$ (dqHII), (qRII) $\to$ trivial, (qRIII) $\to$ (dqHI) and (qRIV) $\to$ (dqHIII).

\subsection{Quantum \texorpdfstring{$q$}--Krawtchouk} 

The quantum $q$-Krawtchouk polynomials are given by
\begin{align}
R^{(qqK)}_i(x;\rr)={}_2\phi_1 \left({{q^{-i},\;q^{-x}}\atop
{q^{-N} }}\;\Bigg\vert \; q;\alpha q^{i+1}\right)\,,
\end{align}
where $\rr=\alpha,N,q.$ Knowing that $\lambda^{(qqK)}_{x,\rr}=-(1-q^{-x})$, relation \eqref{eq:lxr} leads to
\begin{align}
\zeta=q^\eta\,,\quad q^\ox= q^{x+\eta}\,,\quad \xi=1-q^\eta\,,
\end{align}
where $\eta\in \{0,+1,-1\}$.
The coefficients of the recurrence relation read
\begin{align}
    A^{(qqK)}_{i,\rr}=\frac{1-q^{i-N}}{\alpha q^{2i+1}}\,,\quad
    C^{(qqK)}_{i,\rr}=\frac{(1-q^{i})(1-\alpha q^i)}{\alpha q^{2i}}\,.
\end{align}
The whole list of the contiguity relations of type $A_2$ (except the trivial one) for the quantum $q$-Krawtchouk polynomials is
\begin{itemize}
\item[(qqKI)] $\eta=0\,,\quad\oalpha=q\alpha\,,\quad \oN=N\,,\quad q^\ox=q^x$;
\begin{subequations}
         \begin{align}
         &\lambda^{+}_{x,\rr}=1 \,,\quad \Phi_i^{0,+}=q^{-i}\,,\quad \Phi_i^{-1,+}=1-q^{-i}\,,\nonumber \\
         & \lambda^{-}_{x,\rr}=q^{-N}-\alpha q^{1-x}\,,\quad \Phi_i^{0,-}=q^{-i}-\alpha q \,,\quad \Phi_i^{1,-}=q^{-N}- q^{-i}\,,\nonumber 
     \end{align}  
    \end{subequations}
\item[(qqKII)]  $\eta=0\,,\quad\oalpha=q\alpha\,,\quad \oN=N-1\,,\quad q^\ox=q^x$;
\begin{subequations}
         \begin{align}
         &\lambda^{+}_{x,\rr}=1 \,,\quad \Phi_i^{0,+}=\frac{1-q^{N-i}}{1-q^N}\,,\quad \Phi_i^{-1,+}=\frac{1-q^{-i}}{1-q^{-N}}\,,\nonumber \\
         & \lambda^{-}_{x,\rr}=\frac{\alpha q(1-q^{N-x})}{1-q^N}\,,\quad \Phi_i^{0,-}=\alpha q-q^{-i} \,,\quad \Phi_i^{1,-}= q^{-i}\,,\nonumber 
     \end{align}  
    \end{subequations}
\item[(qqKIII)]  $\eta=-1\,,\quad\oalpha=\alpha\,,\quad \oN=N-1\,,\quad q^\ox=q^{x-1}$;
\begin{subequations}
         \begin{align}
         &\lambda^{+}_{x,\rr}=1-q^N \,,\quad \Phi_i^{0,+}=1-q^{N-i}\,,\quad \Phi_i^{-1,+}=-q^N(1-q^{-i})(1-\alpha q^{i})\,,\nonumber \\
         & \lambda^{-}_{x,\rr}=\alpha q^{N+1}\frac{1-q^{-x}}{1-q^N}\,,\quad \Phi_i^{0,-}=-q^{-i} \,,\quad \Phi_i^{1,-}= q^{-i}\,.\nonumber     \end{align}  
    \end{subequations}
\end{itemize}

As for the $q$-Racah case, we solved the constraints $\mathfrak{C}_{B_2}$ for the cases $\oN=N, N-1,N+1,N-2,N+2$. All contiguity relations of type $B_2$ for the quantum $q$-Krawtchouk polynomials are derived by applying twice the contiguity relations of type $A_2$ as discussed in Remark \ref{rmk:B2-from-A2}. Similarly, all the contiguity relations of type ${B'_2}$ for the quantum $q$-Krawtchouk polynomials are obtained by using twice contiguity relations of type ${A_2}$ as in Remark \ref{rmk:B2'-from-A2}.

The results in this subsection can be recovered (up to global normalizations) from the $q$-Racah case by taking $\gamma=0$, taking the limit $\alpha \to \infty$ and then renaming $\beta\to \alpha$, with the following correspondence: (qRI) $\to$ (qqKII), (qRII) $\to$ (qqKI), (qRIII) $\to$ (qqKIII) and (qRIV) $\to$ trivial.

\subsection{\texorpdfstring{$q$}--Krawtchouk} 

The  $q$-Krawtchouk polynomials are given by
\begin{align}
R^{(qK)}_i(x;\rr)={}_3\phi_2 \left({{q^{-i},\;\alpha q^i},\;q^{-x}\atop
{q^{-N},\; 0}}\;\Bigg\vert \; q;q\right)\,,
\end{align}
where $\rr=\alpha,N,q.$ Knowing that $\lambda^{(qK)}_{x,\rr}=-(1-q^{-x})$, relation \eqref{eq:lxr} leads to
\begin{align}
\zeta=q^\eta\,,\quad q^\ox= q^{x+\eta}\,,\quad \xi=1-q^\eta\,,
\end{align}
where $\eta \in \{0,+1,-1\}$.
The coefficients of the recurrence relation read
\begin{align}
    A^{(qK)}_{i,\rr}=\frac{(1-q^{i-N})(1-\alpha q^i)}{(1-\alpha q^{2i})(1-\alpha q^{2i+1})}\,,\quad
    C^{(qK)}_{i,\rr}=\alpha q^{2i-N-1}\frac{(1-\alpha q^{i+N})(1-q^{i})}{(1-\alpha q^{2i-1})(1-\alpha q^{2i})}\,.
\end{align}
The whole list of the contiguity relations of type $A_2$ (except the trivial one) for the $q$-Krawtchouk polynomials is:
\begin{itemize}
\item[(qKI)] $\eta=0\,,\quad\oalpha=q\alpha\,,\quad \oN=N\,,\quad q^\ox=q^x$;
\begin{subequations}
         \begin{align}
         &\lambda^{+}_{x,\rr}=1 \,,\quad \Phi_i^{0,+}=\frac{1-\alpha q^{i}}{1-\alpha q^{2i}}\,,\quad \Phi_i^{-1,+}=\frac{\alpha q^i(1-q^i)}{1-\alpha q^{2i}}\,,\nonumber\\
         & \lambda^{-}_{x,\rr}=q^{N-x} \,,\quad \Phi_i^{0,-}=\frac{q^i(1-\alpha q^{N+i+1})}{1-\alpha q^{2i+1}} \,,\quad \Phi_i^{1,-}= \frac{q^N-q^i}{1-\alpha q^{2i+1}}\,,\nonumber
     \end{align}  
    \end{subequations}
\item[(qKII)] $\eta=0\,,\quad\oalpha=q\alpha\,,\quad \oN=N-1\,,\quad q^\ox=q^x$;
\begin{subequations}
         \begin{align}
         &\lambda^{+}_{x,\rr}=1 \,,\quad \Phi_i^{0,+}=\frac{(1-q^{i-N})(1-\alpha q^i)}{(1-\alpha q^{2i})(1-q^{-N})}\,,\quad \Phi_i^{-1,+}=\frac{
        (1-q^i)(1-\alpha q^{N+i})}{(1-\alpha q^{2i})(1-q^N)}\,,\nonumber\\
         & \lambda^{-}_{x,\rr}=\frac{1-q^{N-x}}{1-q^N} \,,\quad \Phi_i^{0,-}=-\frac{\alpha q^{2i+1}}{1-\alpha q^{2i+1}} \,,\quad \Phi_i^{1,-}= \frac{1}{1-\alpha q^{2i+1}}\,,\nonumber
     \end{align}  
    \end{subequations}
\item[(qKIII)] $\eta=-1\,,\quad \oalpha=q\alpha\,,\quad \oN=N-1\,,\quad q^\ox=q^{x-1}; $
\begin{subequations}
         \begin{align}
         &\lambda^{+}_{x,\rr}=1 \,,\quad \Phi_i^{0,+}=\frac{(1-q^{N-i})(1-\alpha q^i)}{(1-\alpha q^{2i})(1-q^{N})}\,,\quad \Phi_i^{-1,+}=\alpha q^{i}\frac{(1-\alpha q^{i+N})(1-q^i)}{(1-\alpha q^{2i})(1-q^{N})}\,,\nonumber\\
         & \lambda^{-}_{x,\rr}=\frac{ 1-q^{-x}}{1-q^{N}}\,,\quad \Phi_i^{0,-}=-\frac{q^{i-N}}{1-\alpha q^{2i+1}}\,,\quad \Phi_i^{1,-}=\frac{q^{i-N}}{1-\alpha q^{2i+1}}\,.\nonumber
     \end{align}  
    \end{subequations}
\end{itemize}

As for the $q$-Racah case, we solved the constraints $\mathfrak{C}_{B_2}$ for the cases $\oN=N, N-1,N+1,N-2,N+2$. All contiguity relations of type $B_2$ for the $q$-Krawtchouk polynomials are derived by applying twice the contiguity relations of type $A_2$ as discussed in Remark \ref{rmk:B2-from-A2}. Similarly, all the contiguity relations of type ${B'_2}$ for the $q$-Krawtchouk polynomials are obtained by using twice contiguity relations of type ${A_2}$ as in Remark \ref{rmk:B2'-from-A2}.

The results in this subsection can be recovered (up to global normalizations) from the $q$-Racah case by taking $\gamma=0$, replacing $\beta \to q^{-1}\alpha^{-1}\beta$, taking the limit $\alpha \to 0$, and then renaming $\beta \to \alpha$, with the following correspondence: (qRI) $\to$ (qKII), (qRII) $\to$ (qKI), (qRIII) $\to$ (qKIII) and (qRIV) $\to$ (qKI).

\subsection{Affine \texorpdfstring{$q$}--Krawtchouk} 

The  affine $q$-Krawtchouk polynomials are given by
\begin{align}
R^{(aqK)}_i(x;\rr)={}_3\phi_2 \left({{q^{-i},\;0,\;q^{-x}}\atop
{\alpha q,\;q^{-N}}}\;\Bigg\vert \; q;q\right)\,,
\end{align}
where $\rr=\alpha,N,q.$ Knowing that $\lambda^{(aqK)}_{x,\rr}=-(1-q^{-x})$, \eqref{eq:cond-lambda} leads to
\begin{align}
\zeta=q^\eta\,,\quad q^\ox=q^{x+\eta}\,,\quad \xi=1-q^{\eta}\,,
\end{align}
where $\eta\in \{0,+1,-1\}$.
The coefficients of the recurrence relation read
\begin{align}
    A^{(aqK)}_{i,\rr}=(1-q^{i-N})(1-\alpha q^{i+1})\,,\quad
    C^{(aqK)}_{i,\rr}=-\alpha q^{i-N}(1-q^{i})\,.
\end{align}
The whole list of the contiguity relations of type $A_2$ (except the trivial one) for the affine $q$-Krawtchouk polynomials is:
\begin{itemize}
\item[(aqKI)] $\eta=0\,,\quad \oalpha=q\alpha\,,\quad \oN=N\,,\quad q^\ox=q^x$;
\begin{subequations}
         \begin{align}
         &\lambda^{+}_{x,\rr}=1 \,,\quad \Phi_i^{0,+}=\frac{1-\alpha q^{i+1}}{1-\alpha q}\,,\quad \Phi_i^{-1,+}=-\frac{\alpha q 
        (1-q^i)}{1-\alpha q}\,,\nonumber\\
         & \lambda^{-}_{x,\rr}=\frac{q^N(\alpha q-q^{-x})}{1-\alpha q} \,,\quad \Phi_i^{0,-}=-q^i \,,\quad \Phi_i^{1,-}=q^i-q^{N}\,,\nonumber
     \end{align}  
    \end{subequations}
\item[(aqKII)] $\eta=0\,,\quad\oalpha=\alpha\,,\quad \oN=N-1\,,\quad q^\ox=q^x$;
\begin{subequations}
         \begin{align}
         &\lambda^{+}_{x,\rr}=1 \,,\quad \Phi_i^{0,+}=\frac{1- q^{i-N}}{1- q^{-N}}\,,\quad \Phi_i^{-1,+}=\frac{1-q^i}{1- q^N}\,,\nonumber\\
         & \lambda^{-}_{x,\rr}=\frac{1-q^{N-x}}{1-q^N} \,,\quad \Phi_i^{0,-}=\alpha q^{i+1} \,,\quad \Phi_i^{1,-}=1-\alpha q^{i+1}\,,\nonumber
     \end{align}  
    \end{subequations}
\item[(aqKIII)] $\eta=-1\,,\quad \oalpha=q\alpha\,,\quad \oN=N-1\,,\quad q^\ox=q^{x-1}$;
\begin{subequations}
         \begin{align}
         &\lambda^{+}_{x,\rr}=1 \,,\quad \Phi_i^{0,+}=\frac{(1-q^{N-i})(1-\alpha q^{i+1})}{(1-q^{N})(1-\alpha q)}\,,\quad \Phi_i^{-1,+}=-\frac{\alpha q (1-q^i)}{(1-q^N)(1-\alpha q)}\,,\nonumber\\
         & \lambda^{-}_{x,\rr}=\frac{1-q^{-x}}{(1-q^N)(1-\alpha q)},\quad
         \Phi_i^{0,-}=-q^{i-N}\,,\quad \Phi_i^{1,-}=q^{i-N} \,.\nonumber
     \end{align}  
    \end{subequations}
\end{itemize}

As for the $q$-Racah case, we solved the constraints $\mathfrak{C}_{B_2}$ for the cases $\oN=N, N-1,N+1,N-2,N+2$. All contiguity relations of type $B_2$ for the affine $q$-Krawtchouk polynomials are derived by applying twice the contiguity relations of type $A_2$ as discussed in Remark \ref{rmk:B2-from-A2}. Similarly, all the contiguity relations of type ${B'_2}$ for the affine $q$-Krawtchouk polynomials are obtained by using twice contiguity relations of type ${A_2}$ as in Remark \ref{rmk:B2'-from-A2}.

The results in this subsection can be recovered (up to global normalizations) from the $q$-Racah case by taking $\beta=\gamma=0$, with the following correspondence: (qRI) $\to$ (aqKII), (qRII) $\to$ trivial, (qRIII) $\to$ (aqKIII) and (qRIV) $\to$ (aqKI). 

\subsection{Dual \texorpdfstring{$q$}--Krawtchouk} 

The  dual $q$-Krawtchouk polynomials are given by
\begin{align}
R^{(dqK)}_i(x;\rr)={}_3\phi_2 \left({{q^{-i},\;q^{-x},\;\alpha q^{x}}\atop
{q^{-N},\; 0}}\;\Bigg\vert \; q;q\right)\,,
\end{align}
where $\rr=\alpha,N,q$. Knowing that $\lambda^{(dqK)}_{x,\rr}=-(1-q^{-x})(1-\alpha q^{x})$, relation \eqref{eq:cond-lambda} leads to \begin{align}
\zeta=q^\eta\,,\quad q^\ox=  q^{x+\eta}\,,\quad
    \oalpha=\alpha q^{-2\eta} \,, \quad \xi=(1-q^\eta)(1-\alpha q^{-\eta}) \,,
\end{align}
where $\eta\in \{0,+1,-1\}$.
The coefficients of the recurrence relation read
\begin{align}
  A^{(dqK)}_{i,\rr}=1-q^{i-N}\,,\quad
  C^{(dqK)}_{i,\rr}=\alpha(1-q^{i})\,.
\end{align}
The whole list of the contiguity relations of type $A_2$ (except the trivial one) for the dual $q$-Krawtchouk polynomials is:
\begin{itemize}
\item[(dqKI)]$\eta=0\,,\quad  \oalpha=\alpha\,,\quad \oN=N-1\,,\quad q^\ox=q^x$;
\begin{subequations}
         \begin{align}
         &\lambda^{+}_{x,\rr}=1 \,,\quad \Phi_i^{0,+}=\frac{1- q^{i-N}}{1- q^{-N}}\,,\quad \Phi_i^{-1,+}=\frac{1-q^i}{1- q^N}\,,\nonumber\\
         & \lambda^{-}_{x,\rr}=\frac{(1-q^{N-x})(1-\alpha q^{x+N})}{1-q^N} \,,\quad \Phi_i^{0,-}=-\alpha q^{N} \,,\quad \Phi_i^{1,-}=1\;,\nonumber
     \end{align}  
    \end{subequations}
\item[(dqKII)] $\eta=-1\,,\quad \oalpha=q^2\alpha\,,\quad \oN=N-1\,,\quad q^\ox=q^{x-1};$
\begin{subequations}
         \begin{align}
         &\lambda^{+}_{x,\rr}=1 \,,\quad \Phi_i^{0,+}=\frac{1- q^{N-i}}{1- q^{N}}\,,\quad \Phi_i^{-1,+}=-\alpha q^{N+1} \frac{1-q^{-i}}{1-q^N}\,,\nonumber\\
         & \lambda^{-}_{x,\rr}=\frac{(1-q^{-x})(1-\alpha q^x)}{1-q^{-N}} \,,\quad \Phi_i^{0,-}=q^{i} \,,\quad \Phi_i^{1,-}=-q^{i}.\nonumber
     \end{align}  
    \end{subequations}
\end{itemize}

As for the $q$-Racah case, we proved that the contiguity relations of type $B_2$ for the dual $q$-Krawtchouk  polynomials are obtained using twice contiguity relations of type $A_2$ as in Remark \ref{rmk:B2-from-A2}. Similarly,  all the contiguity relations of type ${B'_2}$ for the dual $q$-Krawtchouk polynomials are obtained by using twice contiguity relations of type ${A_2}$ as in Remark \ref{rmk:B2'-from-A2}.

The results in this subsection can be recovered (up to global normalizations) from the $q$-Racah case by taking $\alpha=\beta=0$ and then replacing $\gamma\to\alpha q^N$, with the following correspondence: (qRI) $\to$ (dqKI), (qRII) $\to$ trivial, (qRIII) $\to$ (dqKII) and (qRIV) $\to$ trivial.

\subsection{Racah} 

The Racah polynomials are given by
\begin{align}
    R^{(R)}_i(x;\rr)={}_4F_3 \left({{-i,\;i+\alpha+\beta+1, \;-x,\;x+\gamma-N}\atop
{\alpha+1,\; \beta+\gamma+1,\;-N}}\;\Bigg\vert \; 1\right)\,,
\end{align}
where $\rr=\alpha,\beta,\gamma,N.$ In this paper, the usual definition of the generalized hypergeometric function is used, for $p=1,2,3,4$ and $i=0,1,2,\dots$,
\begin{align}
{}_{p+1}F_p \left({{-i,\; a_1,\; \dots,\; a_p }\atop
{ b_1,\; \dots ,\; b_p}}\;\Bigg\vert z\right)=
\sum_{k=0}^i \frac{(-i,a_1,\dots,a_p)_k\;z^k}{(b_1,\dots,b_p)_k\; k!}  \,,
\end{align}
where $a_1, \dots,a_p,b_1,\dots,b_p$ are parameters and
\begin{align}
(b_1,\dots,b_p)_k=(b_1)_k\dots (b_p)_k\,,\qquad (b_i)_k=\prod_{\ell=0}^{k-1}(b_i +\ell)\;.
\end{align}
The Racah polynomials satisfy the orthogonality relation 
\begin{align}
\sum_{x=0}^NR_i^{(R)}(x;\rr)R_j^{(R)}(x;\rr)w^{(R)}(x;\rr)=\delta_{i,j}h_i\,,\end{align}
where  
\begin{align}w^{(R)}(x;\rr)=\frac{(\alpha+1)_x(\beta+\gamma+1)_x(-N)_x(\gamma-N)_x\left(\frac{-N+\gamma+2}{2}\right)_x}{(-\alpha-N+\gamma)_x(-\beta-N)_x\left(\frac{\gamma-N}{2}\right)_x(\gamma+1)_xx!}\,.
\end{align}
Knowing that  $\lambda^{(R)}_{x,\rr}=x(x+\gamma-N)$, relation \eqref{eq:lxr} leads to
\begin{align}
 \zeta=1\,,\quad \quad \xi=\eta\,(\gamma-N-\eta)\,,\quad    2\eta=\gamma-\ogamma+\oN-N\,,
\end{align}
where $\eta\in\{0,+1,-1\}$. The coefficients of the recurrence relation read
\begin{subequations}
  \begin{align}
    &A^{(R)}_{i,\rr}=\frac{(i-N)(i+\alpha+1)(i+\alpha+\beta+1)(i+\beta+\gamma+1)}{(2i+\alpha+\beta+1)(2i+\alpha+\beta+2)}\,,\\
    &C^{(R)}_{i,\rr}=
    \frac{i(i+\alpha-\gamma)(i+\alpha+\beta+N+1)(i+\beta)}{(2i+\alpha+\beta)(2i+\alpha+\beta+1)}\,.
\end{align}  
\end{subequations}
From these expressions, the constraints $(\mathfrak{C}_{A_2})$ for the Racah polynomials are solved and we provide the whole list of the possibilities (except the trivial one) up to transformations on the parameters leaving the polynomial invariant. We also provide the values of the coefficients, which may be rescaled, of the associated contiguity relation:
\begin{itemize}
   \item[(RI)] $\eta=0\,, \quad\oalpha=\alpha\,,\quad \obeta=\beta+1\,,\quad \ogamma=\gamma-1\,,\quad \oN=N-1\,, \quad \ox= x$;
     \begin{subequations}
          \begin{align*}
       & \lambda^{+}_{x,\rr}=1\,,\quad \Phi_i^{0,+}=\frac{\left(i +1+\alpha +\beta \right) \left(N-i \right)}{\left(2 i +1+\alpha +\beta \right) N}\,,\quad \Phi_i^{-1,+}=\frac{\left(i +1+\alpha +\beta +N \right) i}{\left(2 i +1+\alpha +\beta \right) N}\,,\\
        &  \lambda^{-}_{x,\rr}=\frac{\left(x +\gamma \right) \left(x -N \right)}{ N}
\,,\quad\Phi_i^{0,-}=\frac{ \left(i +1+\beta \right) \left(i +1+\alpha -\gamma \right)}{ 2 i +2+\alpha +\beta }\,,\\&\Phi_i^{1,-}=-\frac{\left(i +\beta +\gamma +1\right) \left(i +\alpha +1\right) }{ 2 i +2+\alpha +\beta}\,.
     \end{align*}
     \end{subequations}

      \item[(RII)] $\eta=0\,, \quad\oalpha=\alpha\,,\quad \obeta=\beta+1\,,\quad \ogamma=\gamma\,,\quad \oN=N\,, \quad \ox= x;$
     \begin{subequations}
          \begin{align*}
       & \lambda^{+}_{x,\rr}=1\,,\quad \Phi_i^{0,+}=\frac{\left(i +1+\alpha +\beta \right) \left(i+1 +\beta +\gamma\right)}{\left(2 i +1+\alpha +\beta \right) \left(1+\beta +\gamma \right)}\,,\quad \Phi_i^{-1,+}=-\frac{i \left(i +\alpha -\gamma \right)}{\left(2 i +1+\alpha +\beta \right) \left(1+\beta +\gamma \right)}\,,\\
        &  \lambda^{-}_{x,\rr}=\frac{\left(x +\beta +\gamma +1\right) \left(N +\beta -x +1\right)}{1+\beta +\gamma},\quad\Phi_i^{0,-}=\frac{ \left(i+2 +\alpha+\beta +N \right) \left(i +1+\beta \right)}{2 i+2 +\alpha +\beta}\,,\\& \Phi_i^{1,-}=\frac{ \left(N-i \right) \left(i +1+\alpha \right)}{2 i+2 +\alpha +\beta }\,,
     \end{align*}
     \end{subequations}
 
         \item[(RIII)] $\eta=-1\,, \quad\oalpha=\alpha+1\,,\quad \obeta=\beta\,,\quad \ogamma=\gamma+1\,,\quad \oN=N-1\,, \quad \ox= x-1;$
     \begin{subequations}
          \begin{align*}
       & \lambda^{+}_{x,\rr}=1\,,\quad \Phi_i^{0,+}=\frac{\left(i +1+\alpha +\beta \right) \left(i +1+\alpha \right) \left(N-i \right) \left(i+1 +\beta +\gamma \right)}{\left(2 i +1+\alpha +\beta \right) \left(1+\alpha \right) \left(1+\beta +\gamma \right) N}\,,\\
       &\Phi_i^{-1,+}=\frac{i \left(i +\beta \right) \left(i +\alpha -\gamma \right) \left(i +1+\alpha +\beta +N \right)}{\left(2 i +1+\alpha +\beta \right) \left(1+\alpha \right) \left(1+\beta +\gamma \right) N}\,,\\
        &  \lambda^{-}_{x,\rr}= \frac{x   \left(x +\gamma -N\right)}{ \left(1+\alpha \right) \left(1+\beta +\gamma \right) N}\,,\quad \Phi_i^{0,-}=\frac{1 }{2 i +2+\alpha +\beta}\,,\quad\Phi_i^{1,-}=-\frac{1 }{2 i +2+\alpha +\beta}\,,
     \end{align*}
     \end{subequations}
   
    \item[(RIV)] $\eta=0\,,  \quad\oalpha=\alpha+1\,,\quad \obeta=\beta\,,\quad \ogamma=\gamma\,,\quad \oN=N\,, \quad \ox= x;$
     \begin{subequations}
          \begin{align*}
       & \lambda^{+}_{x,\rr}=1\,,\quad \Phi_i^{0,+}=\frac{(i+1+\alpha+\beta)(i+1+\alpha)}{(1+\alpha)(2i+1+\alpha+\beta)}\,,\quad \Phi_i^{-1,+}=-\frac{i(i+\beta)}{(1+\alpha)(2i+1+\alpha+\beta)}\,,\\
        &  \lambda^{-}_{x,\rr}=\frac{\left(x+1+\alpha \right)  \left(N +\alpha -\gamma -x +1\right)}{1+\alpha}\,,\quad\Phi_i^{0,-}=\frac{ \left(i+2 +\alpha +\beta +N \right) \left(i +1+\alpha-\gamma \right)}{2 i+2 +\alpha +\beta }\,,\\
        & \Phi_i^{1,-}=\frac{ \left(i+1 +\beta +\gamma \right) \left(N-i \right)}{2 i +2+\alpha +\beta }\,.
     \end{align*}
     \end{subequations}

\end{itemize}
We recover the classification presented in the appendix of \cite{Oste2015Doubling}. We have used the same labeling of the solutions as in  \cite{Oste2015Doubling}. 

The constraints $(\mathfrak{C}_{B_2})$ for the Racah polynomials have been solved for the cases where $\oN=N, N+1,N-1,N-2,N+2$. In these cases, all the contiguity relations of type $B_2$ for the Racah polynomials are derived by applying the contiguity relations of type $A_2$ twice, as discussed in Remark \ref{rmk:B2-from-A2}. Similarly, we proved that all the contiguity relations of type ${B'_2}$ for the Racah polynomials are obtained by using twice contiguity relations of type ${A_2}$ as in Remark \ref{rmk:B2'-from-A2}.

The results in this subsection can be recovered (up to global normalizations) from the $q$-Racah case by reparametrizing $\alpha \to q^\alpha$, $\beta \to q^\beta$, $\gamma \to q^\gamma$ and taking the limit $q\to 1$ (it is necessary to renormalize some coefficients in order for the limit to be well-defined). The correspondence between the solutions preserves the numbering.

We conclude by giving in each case the corresponding parameters $\nu, \chi$ in the Christoffel and Geronimus transforms. 
\begin{itemize}
 \item[(RI)] $\nu=N\,,\quad \chi=\frac{w^{(R)}(N;\rr)}{N\gamma};$ 

 \item[(RII)]  $\nu=\beta+N+1\,,\quad \chi=0;$ 
 \item[(RIII)] $\nu=0\,,\quad \chi=\frac{(\gamma+1)(\beta+N)(\alpha+N-\gamma)}{(\beta+\gamma+1)(\alpha+1)(N-2-\gamma)(N-\gamma-1)N};$ 
 \item[(RIV)] $\nu=-\alpha-1\,, \quad \chi=0;$ 
\end{itemize}
For completeness, we verify that the relations \eqref{eq:measures-spectral-transformations} involving the measures hold. For example:
\begin{itemize}
\item[(RI)]
$\sum_{x=0}^{N-1}\frac{w^{(R)}(x,\orr)}{\lambda_{N,\rr}-\lambda_{x,\rr}}\delta_x + \frac{w^{(R)}(N,\rr)}{N\gamma}\delta_{x-N}=\frac{1}{N\gamma}\sum_{x=0}^N w^{(R)}(x,\rr)\delta_x\,.$
 \item[(RII)] 
$ \sum_{x=0}^{N}\frac{w^{(R)}(x,\orr)}{\lambda_{\beta+N+1,\rr}-\lambda_{x,\rr}}\delta_x =\frac{1}{\left(\beta +N +1\right) \left(\beta +\gamma +1\right)}\sum_{x=0}^N w^{(R)}(x,\rr)\delta_x\,.$
\end{itemize}

\subsection{Hahn}

The Hahn polynomials are given by
\begin{align}
    R^{(H)}_i(x;\rr)={}_3F_2 \left({{-i,\;i+\alpha+\beta+1, \;-x}\atop
{\alpha+1,\;-N}}\;\Bigg\vert \; 1\right)\,,
\end{align}
where $\rr=\alpha,\beta,N$. Knowing that $\lambda^{(H)}_{x,\rr}=-x$, relation \eqref{eq:lxr} leads to 
$$\zeta=1\,, \quad \xi=-\eta \in\{0,+1,-1\}\,.$$
The coefficients of the recurrence relation read
\begin{align}
    A^{(H)}_{i,\rr}=\frac{(i+\alpha+\beta+1)(i+\alpha+1)(N-i)}{(2i+\alpha+\beta+1)(2i+\alpha+\beta+2)}\,,\quad
    C^{(H)}_{i,\rr}=\frac{i(i+\alpha+\beta+N+1)(i+\beta)}{(2i+\alpha+\beta)(2i+\alpha+\beta+1)}\,.
\end{align}
The whole list of the contiguity relations of type $A_2$ (except the trivial one) for the Hahn polynomials is
\begin{itemize}
    \item[(HI)] $\eta=0 \,, \quad \oalpha=\alpha+1\,,\quad \obeta=\beta\,,\quad \oN=N\,, \quad \ox= x;$
     \begin{subequations}
          \begin{align*}
       & \lambda^{+}_{x,\rr}=1\,,\quad \Phi_i^{0,+}=\frac{\left(i +1+\alpha \right) \left(i +1+\alpha +\beta \right)}{(1+\alpha)(2 i +1+\alpha +\beta)}\,,\quad \Phi_i^{-1,+}=-\frac{i\left(i +\beta \right)}{(1+\alpha)(2 i +1+\alpha +\beta) }\,,\\
        &  \lambda^{-}_{x,\rr}=\frac{ x+1 +\alpha}{1+\alpha }\,,\quad  \Phi_i^{0,-}=\frac{ i +2+\alpha +\beta +N }{2 i +2+\alpha +\beta}\,,\quad 
        \Phi_i^{1,-}=\frac{i-N}{ 2 i +2+\alpha +\beta }\,,
     \end{align*}
     \end{subequations}
      \item[(HII)] $\eta=-1\,, \quad\oalpha=\alpha+1\,,\quad \obeta=\beta\,,\quad \oN=N-1\,, \quad \ox= x-1;$
    \begin{subequations}
          \begin{align*}
       & \lambda^{+}_{x,\rr}=1\,,\quad \Phi_i^{0,+}=\frac{\left(i +1+\alpha \right) \left(i +1+\alpha +\beta \right) \left(N -i \right)}{(2 i +1+\alpha +\beta)\left(1+\alpha \right) N }\,,\\
       &\Phi_i^{-1,+}=-\frac{i \left(i +1+\alpha +\beta +N \right) \left(i +\beta \right)}{(2 i +1+\alpha +\beta)\left(1+\alpha \right) N}\,,\\
        &  \lambda^{-}_{x,\rr}=-x \,,\quad \Phi_i^{0,-}=-\frac{\left(1+\alpha \right) N}{2 i+2 +\alpha +\beta }\,,\quad \Phi_i^{1,-}=\frac{\left(1+\alpha \right) N}{2 i +2+\alpha +\beta }\,,
     \end{align*}
     \end{subequations}
    \item[(HIII)] $\eta=0\,, \quad\oalpha=\alpha\,,\quad \obeta=\beta+1\,,\quad \oN=N\,, \quad \ox= x;$
     \begin{subequations}
          \begin{align*}
       & \lambda^{+}_{x,\rr}=1\,,\quad \Phi_i^{0,+}=\frac{i +1+\alpha +\beta}{2 i +1+\alpha +\beta}\,,\quad \Phi_i^{-1,+}=\frac{i}{2 i +1+\alpha +\beta}\,,\\
        &  \lambda^{-}_{x,\rr}= N +\beta -x +1\,,\ \Phi_i^{0,-}=\frac{\left(i +2+\alpha +\beta +N \right) \left(i +1+\beta \right)}{ 2 i+2 +\alpha +\beta}\,,\ &\Phi_i^{1,-}=\frac{ \left(N -i \right) \left(i +1+\alpha \right)}{2 i+2 +\alpha +\beta}\,,
     \end{align*}
     \end{subequations}
    \item[(HIV)] $\eta=0\,,\quad\oalpha=\alpha\,,\quad \obeta=\beta+1\,,\quad \oN=N-1\,, \quad \ox= x;$
     \begin{subequations}
          \begin{align*}
       & \lambda^{+}_{x,\rr}=1\,,\quad \Phi_i^{0,+}=\frac{\left(i +1+\alpha +\beta \right) \left(N -i \right)}{(2 i +1+\alpha +\beta)N}\,,\quad \Phi_i^{-1,+}=\frac{i\left(i +1+\alpha +\beta +N \right)}{(2 i +1+\alpha +\beta)N}\,,\\
        &  \lambda^{-}_{x,\rr}=N -x \,,\quad \Phi_i^{0,-}=\frac{N \left(i +1+\beta \right)}{2 i+2+\alpha +\beta }\,,\quad \Phi_i^{1,-}=\frac{ N\left(i +1+\alpha \right)}{2 i +2+\alpha +\beta}\,.
     \end{align*}
     \end{subequations}
\end{itemize}
We recover the classification obtained in \cite{Oste2015Doubling}. We have used the same labeling of the solutions as in \cite{Oste2015Doubling}. 

The constraints $(\mathfrak{C}_{B_2})$ for the Hahn polynomials have been solved for the cases where $\oN=N,N+1,N-1,N+2,N-2$. In these cases, all the contiguity relations of type $B_2$ for the Hahn polynomials are derived by applying the contiguity relations of type $A_2$ twice, as discussed in Remark \ref{rmk:B2-from-A2}.
Similarly, we proved that all the contiguity relations of type ${B'_2}$ for the Hahn polynomials are obtained by using twice contiguity relations of type ${A_2}$ as in Remark \ref{rmk:B2'-from-A2}.

The results in this subsection can be recovered (up to global normalizations) from the $q$-Hahn case by reparametrizing $\alpha \to q^\alpha$, $\beta \to q^\beta$, and taking the limit $q\to 1$ (it is necessary to renormalize some coefficients in order for the limit to be well-defined). The correspondence between the solutions preserves the numbering.

\subsection{Dual Hahn}

The dual Hahn polynomials are given by
\begin{align}
    R^{(dH)}_i(x;\rr)={}_3F_2 \left({{-i, \;-x,\;x+\alpha+\beta+1}\atop
{\alpha+1,\;-N}}\;\Bigg\vert \; 1\right)\,,
\end{align}
where the parameter $\rr=\alpha,\beta,N.$ Knowing that $\lambda^{(dH)}_{x,\rr}=x(x+\alpha+\beta+1)$, relation \eqref{eq:lxr} leads to 
$$\zeta=1\,, \quad \xi=\eta(\alpha+\beta-\eta+1),\quad  \beta+\alpha-\oalpha-\obeta=2\eta\,,$$
where $\eta\in\{0,+1,-1\}$. The coefficients of the recurrence relation read
\begin{align}
    A^{(dH)}_{i,\rr}=(i+\alpha+1)(i-N)\,,\quad
    C^{(dH)}_{i,\rr}=i(i-\beta-N-1)\,.
\end{align}
The whole list of the contiguity relations of type $A_2$ (except the trivial one) for the dual Hahn polynomials is
\begin{itemize}
  \item[(dHI)] $\eta=-1\,,  \quad\oalpha=\alpha+1\,,\quad \obeta=\beta+1\,,\quad \oN=N-1\,, \quad \ox= x-1;$
   \begin{subequations}
          \begin{align*}
       & \lambda^{+}_{x,\rr}=1\,,\quad \Phi_i^{0,+}=\frac{(i+1+\alpha)(N-i)}{N(1+\alpha)}\,,\quad\Phi_i^{-1,+}=\frac{i\left(i -1-\beta -N \right)}{N(1+\alpha)} \,, \\
        &  \lambda^{-}_{x,\rr}=\frac{x \left(x+\alpha +\beta+1 \right) }{\left(1+\alpha\right) N}\,,\quad \Phi_i^{0,-}=1\,,\quad \Phi_i^{1,-}=-1\,.
     \end{align*}
     \end{subequations}
    \item[(dHII)] $\eta=0\,,\quad\oalpha=\alpha\,,\quad \obeta=\beta\,,\quad \oN=N-1\,, \quad \ox= x;$
     \begin{subequations}
          \begin{align*}
       & \lambda^{+}_{x,\rr}=1\,,\quad \Phi_i^{0,+}=\frac{N-i}{N}\,,\quad \Phi_i^{-1,+}=\frac{i}{N}\,,\\
        &  \lambda^{-}_{x,\rr}=\frac{\left(N-x  \right) \left(x+1+\alpha+\beta +N\right)}{N}\,,\quad \Phi_i^{0,-}=\beta+N-i\,,\quad \Phi_i^{1,-}=i+\alpha+1\,,
     \end{align*}
     \end{subequations}
        \item[(dHIII)] $\eta=0\,,\quad\oalpha=\alpha+1\,,\quad \obeta=\beta-1\,,\quad \oN=N\,, \quad \ox= x;$
    \begin{subequations}
          \begin{align*}
       & \lambda^{+}_{x,\rr}=1\,,\quad \Phi_i^{0,+}=\frac{i+1+\alpha}{1+\alpha}\,,\quad \Phi_i^{-1,+}=-\frac{i}{1+\alpha}\,,\\
        &  \lambda^{-}_{x,\rr}=-\frac{\left(x+1 +\alpha \right) \left(x+\beta  \right)}{1+\alpha}\,,\quad \Phi_i^{0,-}=i - \beta - N\,,\quad \Phi_i^{1,-}=N-i\,.
     \end{align*}
     \end{subequations}
\end{itemize}
We recover the classification obtained in \cite{Oste2015Doubling}. We have used the same labeling of the solutions as in \cite{Oste2015Doubling}. 

The contiguity relations of type $B_2$ for the dual Hahn polynomials are obtained using twice contiguity relations of type $A_2$ as in Remark \ref{rmk:B2-from-A2}. Similarly, we proved that all the contiguity relations of type ${B'_2}$ for the dual Hahn polynomials are obtained by using twice contiguity relations of type ${A_2}$ as in Remark \ref{rmk:B2'-from-A2}. 

The results in this subsection can be recovered (up to global normalizations) from the dual $q$-Hahn case by reparametrizing $\alpha \to q^\alpha$, $\beta \to q^\beta$, and taking the limit $q\to 1$ (it is necessary to renormalize some coefficients in order for the limit to be well-defined). The correspondence between the solutions preserves the numbering.

\subsection{Krawtchouk \label{sec:kraw}}
The Krawtchouk polynomials are given by
\begin{align}
R^{(K)}_i(x;\rr)={}_2F_1 \left({{-i, \;-x}\atop
{-N}}\;\Bigg\vert \; \frac{1}{\alpha}\right)\,,
\end{align}
where the parameter $\rr=\alpha, N.$ Knowing that $\lambda^{(K)}_{x,\rr}=-x$, relation \eqref{eq:lxr} leads to 
$$\zeta=1\,, \quad \xi=-\eta \in\{0,+1,-1\}\,.$$
The coefficients of the recurrence relation read
\begin{align}
    A^{(K)}_{i,\rr}=\alpha(N-i)\,,\quad
    C^{(K)}_{i,\rr}=i(1-\alpha)\,.
\end{align}
The whole list of the contiguity relations of type $A_2$ (except the trivial one) for the Krawtchouk polynomials is
\begin{itemize}
    \item[(KI)] $\eta=0\,,\quad\oalpha=\alpha\,,\quad \oN=N-1\,, \quad \ox=x;$
     \begin{subequations}
          \begin{align*}
       & \lambda^{+}_{x,\rr}=1\,,\quad \Phi_i^{0,+}=\frac{N-i}{N}\,,\quad \Phi_i^{-1,+}=\frac{i}{N}\,,\\
        &  \lambda^{-}_{x,\rr}=\frac{N-x}{ N}\,,\quad \Phi_i^{0,-}=1-\alpha\,,\quad \Phi_i^{1,-}=\alpha\,,
     \end{align*}
     \end{subequations}
    \item[(KII)] $\eta=-1\,,\quad\oalpha=\alpha\,,\quad \oN=N-1\,, \quad \ox= x-1;$
    \begin{subequations}
          \begin{align*}
       & \lambda^{+}_{x,\rr}=1\,,\quad \Phi_i^{0,+}=\frac{N-i}{ N}\,,\quad \Phi_i^{-1,+}=\frac{(\alpha-1)i}{\alpha N}\,,\\
        &  \lambda^{-}_{x,\rr}=x\,,\quad \Phi_i^{0,-}=\alpha N\,,\quad \Phi_i^{1,-}=-\alpha N\,.
     \end{align*}
     \end{subequations}
\end{itemize}

We proved that the contiguity relations of type $B_2$ for the Krawtchouk polynomials are obtained using twice contiguity relations of type $A_2$ as in Remark \ref{rmk:B2-from-A2}. Similarly, we proved that all the contiguity relations of type ${B'_2}$ for the Krawtchouk polynomials are obtained by using twice contiguity relations of type ${A_2}$ as in Remark \ref{rmk:B2'-from-A2}. 

The results in this subsection can be recovered (up to global normalizations) from the four $q$-deformed Krawtchouk families by taking the limit $q\to 1$ (it is necessary to renormalize some coefficients in order for the limit to be well-defined) and performing the following reparametrizations: $\alpha \to \alpha^{-1}$ in the quantum $q$-Krawtchouk case, $\alpha \to 1-\alpha^{-1}$ in the $q$-Krawtchouk and dual $q$-Krawtchouk cases, and $\alpha \to 1-\alpha$ in the affine $q$-Krawtchouk case. The correspondence between the solutions is: (qqKI) $\to$ trivial, (qqKII) $\to$ (KI), (qqKIII) $\to$ (KII), (qKI) $\to$ trivial, (qKII) $\to$ (KI), (qKIII) $\to$ (KII), (aqKI) $\to$ trivial, (aqKII) $\to$ (KI), (aqKIII) $\to$ (KII), (dqKI) $\to$ (KI) and (dqKII) $\to$ (KII). 


\section{Relations for the Bannai--Ito polynomials \label{sec:BI}} 

The case of the Bannai--Ito polynomials is more involved. Indeed, in the following we show that they do not satisfy non-trivial $A_2$-contiguity relations. However, taking suitable $q \to -1$ limits of the contiguity relations for the $q$-Racah polynomials, relations involving Bannai--Ito polynomials and complementary Bannai--Ito polynomials are obtained. Additionally, we showed that all $B_2$ and $B_2'$ contiguity relations are obtained from the latter. We start by recalling the definition of the Bannai--Ito polynomials.

\subsection{Definition and properties of the Bannai--Ito polynomials}

Given any integer $n$, one can define uniquely  $n^e$ an integer and $n^p\in\{0,1\}$ by
\begin{align}
    n=2n^e+n^p\,.
\end{align}
The monic Bannai--Ito polynomials are defined by\footnote{To recover the notations used in previous papers as in \cite{Vinet2013}, the following transformations on the parameters must be performed
\begin{align}
&\alpha\to \rho_1+\rho_2\,,\quad \beta \to -r_1-r_2\,,\quad \gamma \to r_1+\rho_1+\frac12\,,\quad N \to 2r_1 -2\rho_1 -1\,, \qquad \text{for $N$ even}\,,\nonumber\\
&\alpha\to \rho_1-r_1-\frac12\,,\quad \beta \to \rho_2-r_2+\frac12\,,\quad \gamma \to \rho_1-\rho_2\,,\quad N \to -2\rho_1-2\rho_2-1\,, \qquad \text{for $N$ odd}\,.\nonumber
\end{align}
}
\begin{align}
   &  B_i(x;\rr)= \kappa_i\Bigg( {}_4F_3 \left({{-i^e,\;i^e+\alpha+\beta+1, \;-x^e,\;x^e+\gamma-N^e}\atop
{\alpha+1,\;\beta+\gamma,\;-N^e}}\;\Bigg\vert \; 1\right)\\
&\hspace{2cm}+\frac{(-1)^{i+x}\big(i+i^p(2\alpha+2\beta+1)\big)\big(x-x^p(2N^e-2\gamma+1)\big)}{2(\beta+\gamma)\big(2N^e-N^p(N+2\alpha+1)\big)}\nonumber\\
&\hspace{2.5cm}\times{}_4F_3 \left({{-i^e-i^p+1,\;i^e+i^p+\alpha+\beta+1, \;-x^e-x^p+1,\;x^e+x^p+\gamma-N^e}\atop
{\alpha+N^p+1,\;\beta+\gamma+1,\;-N^e-N^p+1}}\;\Bigg\vert \; 1\right)\Bigg)\,,\nonumber
\end{align}
where $\rr=\alpha,\beta,\gamma,N$ and 
\begin{align}
   \kappa_i= \frac{(-N^e)_{i^e+i^p(1-N^p)}(\beta+\gamma)_{i^e+i^p}(1+\alpha)_{i^e+N^p i^p}}{(\alpha+\beta+i^e+1)_{i^e+i^p} }\,.
\end{align}
Knowing that $\lambda^{(BI)}_{x,\rr}=\frac14\Big(1-2\gamma+2N^e-(-1)^x(1-2\gamma+2N^e-2x\Big)$, relation \eqref{eq:lxr} leads to
\begin{align}
    \ox=x+\eta\,,\quad \zeta=(-1)^\eta\,,\quad \ogamma=\gamma+\oN^e-N^e-\eta\,,\quad \xi=2(1-(-1)^\eta)(  \gamma-N^e-(\eta+1)/2)\,,
\end{align}
where $\eta \in\{0,+1,-1\}$.
The monic Bannai--Ito polynomials satisfy the recurrence relation
\begin{align}
    \lambda^{(BI)}_{x,\rr}B_i(x;\rr)=B_{i+1}(x;\rr)-(A^{(BI)}_{i,\rr}+C^{(BI)}_{i,\rr})B_i(x;\rr)+A^{(BI)}_{i-1,\rr}C^{(BI)}_{i,\rr}B_{i-1}(x;\rr)\,,
\end{align}
where the coefficients are
\begin{subequations}
    \begin{align}
   & A^{(BI)}_{i,\rr}=\frac{1}{4(i+1+\alpha+\beta)}\times\begin{cases}        (i-N)(i+2\beta+2\gamma)\,,&i^p=N^p=0\,,\\
        (i+1+2\alpha)(i+1+2\alpha+2\beta)\,,&i^p=1\,,N^p=0\,,\\
        (i+2+2\alpha)(i+2\beta+2\gamma)\,,&i^p=0\,,N^p=1\,,\\
         (i-N)(i+1+2\alpha+2\beta)\,,&i^p=N_p=1\,,\\
    \end{cases}\\
  &  C^{(BI)}_{i,\rr}=\frac{-1}{4(i+\alpha+\beta)}\times\begin{cases}
        i(i+2\beta)\,,&i^p=N^p=0\,,\\
        (i+1+2\alpha-2\gamma)(i+1+N+2\alpha+2\beta)\,,&i^p=1\,,N^p=0\,,\\
        i(i+1+N+2\alpha+2\beta)\,,&i^p=0\,,N^p=1\,,\\
         (i-1+2\beta)(i+1+2\alpha-2\gamma)\,,&i^p=N^p=1\,.\\
    \end{cases}
\end{align}
\end{subequations}
For $N$ odd, they also possess a symmetry property given by
\begin{align}
B_i(x;a,b,c,N)=B_i(x;b+c-1,a-c+1,c,N)\,.
\end{align}

From the previous expressions of $A^{(BI)}_{i,\rr}$ and $C^{(BI)}_{i,\rr}$, we show that the constraints $(\mathfrak{C}_{A_2})$ are satisfied only in the trivial case.
However, as explained in the introduction of this section, limits $q\to -1$ of the $A_2$-contiguity relations for the $q$-Racah polynomials provide  relations between Bannai--Ito and complementary Bannai--Ito polynomials. We recall the definition of the latter, then the limits $q\to -1$ we are interested in, and finally  we give a list of these relations.

\subsection{Complementary Bannai--Ito polynomials}

The complementary Bannai--Ito polynomials $I_i(x)=I_i(x;a,b,c,N)$ have been introduced and studied in \cite{VinetCBI,Vinet2013}. 
They are defined in terms of hypergeometric functions\footnote{The notations in \cite{Vinet2013} are recovered with the following changes
\begin{align}
&a \to \rho_1-r_1-\frac12\,,\quad b \to \rho_2-r_2+\frac12\,,\quad c \to \rho_1-\rho_2\,,\quad N \to -2\rho_1-2\rho_2-2\,, \qquad \text{for $N$ even}\,,\nonumber\\
&a \to \rho_1+\rho_2\,,\quad b \to -r_1-r_2\,,\quad c \to r_1+\rho_1+\frac12\,,\quad N \to 2r_1 -2\rho_1 -2\,, \qquad \text{for $N$ odd}\,.\nonumber
\end{align}} as, for $N$ even, 
\begin{align}
  I_{i}(x)=&\frac{(i^p-N^e)_{i^e}( a-c+i^p+1)_{i^e}(b+i^p)_{i^e}}{(i^e+i^p+1+a+b)_{i^e}} 
 (\mu(x;a,b,c,N)-\sigma(N) )^{i^p}\\
\times & {}_4F_3 \left({{-i^e, \;i^e+i^p+1+a+b,\; -x^e-x^p+i^p,\; x^e+x^p+i^p-N^e-c-1  }\atop
{-N^e+i^p,\; a-c+i^p+1,\; b+i^p}}\;\Bigg\vert \; 1\right)\,,\nonumber
\end{align}
and as, for $N$ odd,
\begin{align}
  I_{i}(x)=&\frac{(N^e-i^e+1)_{i^e}( a-c+i^p+1)_{i^e}(-b-i^e)_{i^e}}{(i^e+i^p+1+a+b)_{i^e}} 
 (\mu(x;a,b,c,N)-\sigma(N) )^{i^p}\\
\times & {}_4F_3 \left({{-i^e, \;i^e+i^p+1+a+b,\; -x^e,\; x^e-N^e-c  }\atop
{-N^e,\; a-c+i^p+1,\; b+1}}\;\Bigg\vert \; 1\right)\,,\nonumber
\end{align}
where 
\begin{equation}
    \begin{aligned}
\sigma(N)=-\frac12\times \begin{cases}
N^e+c+1& \text{for $N$ even,}\\
-N^e-2a+c-2& \text{for $N$ odd,}
\end{cases}
\end{aligned}
\end{equation}
and
\begin{align}
    \mu(x)=
        \frac{(-1)^{x+N}}{2}\left(x-c-N^e +N^p  -1\right)-\frac12 x^p\,.
\end{align}
They satisfy the following three-term recurrence relation:
\begin{align}
  I_{i+1}(x)+(-1)^i \sigma I_i(x) + \tau_i I_{i-1}(x) = \mu(x) I_i(x)\,, 
\end{align}
where 
\begin{align}
\tau_i&=A^{(BI)}_{i,a,b,c,N+1}C^{(BI)}_{i,a,b,c,N+1}\,.
\end{align}
In this case for $N$ even, they also possess a symmetry property given by
\begin{align}
I_i(x;a,b,c,N)=I_i(x;b+c-1,a-c+1,c,N)\,.
\end{align}

There exists a limit allowing to obtain the Bannai--Ito polynomials from the $q$-Racah ones \cite{BI84}. In the notations used here, this limit is performed as follows. Firstly change the parameters of the $q$-Racah polynomials in the following way:
\begin{align}
q\to -e^\epsilon\,,\ q^N \to (-e^\epsilon)^N\,,\ \alpha \to (-1)^N e^{\epsilon(2\alpha +N^p)}
\,,\ \beta \to (-1)^N e^{\epsilon(2\beta -N^p)}
\,,\ \gamma \to (-1)^{N+1} e^{\epsilon(2\gamma-1 +N^p)}\,.
\end{align}
Secondly, take the limit $\epsilon \to 0$.
With this procedure the monic $q$-Racah polynomials $P_i^{(qR)}(x;\rr)$ tend to Bannai--Ito polynomials:
\begin{align}    P_i^{(qR)}(x;\alpha,\beta,\gamma,N,q)
\to B_i(x;\alpha,\beta,\gamma,N)\,.
\end{align}
One can also perform this procedure on the coefficients of the recurrence relation of the $q$-Racah polynomials to show that one obtains at the leading order on $\epsilon$ the coefficients of the recurrence relation of the Bannai--Ito polynomials.

If the parameters in the $q$-Racah polynomials are transformed as in the contiguity relations, the $q$-Racah polynomials do not tend to Bannai--Ito polynomials but to complementary Bannai--Ito polynomials.
For example, the following limit holds, for $N$ even,
    \begin{align}
&P_i^{(qR)}(x;\alpha,\beta q,\gamma/q,N-1,q) \to (-1)^i I_i(x;\beta,\alpha,1-\gamma,N-1)\,.
\end{align}

\subsection{Relations between Bannai--Ito and complementary Bannai--Ito polynomials  }

The results in the previous section suggest looking for relations between Bannai--Ito and complementary Bannai--Ito polynomials of the following type
\begin{subequations}\label{eq:BIcBI}
\begin{align}
&B_i(x;\alpha,\beta,\gamma,N)=
 z_i^0  I_i(\ox;\oalpha,\obeta,\ogamma,\oN)
+ \frac{z_i^-}{i+\alpha+\beta} I_{i-1}(\ox;\oalpha,\obeta,\ogamma,\oN)\,,\\
 &   \frac12 \left( x(-1)^x+(-1)^x\left(\gamma-N^e-\frac{1}{2}\right)+\omega_x \right)I_i(\ox;\oalpha,\obeta,\ogamma,\oN)\nonumber\\ &\qquad=\frac{w_i^0}{i+1+\alpha+\beta} B_i(x;\alpha,\beta,\gamma,N)+ w_i^+ B_{i+1}(x;\alpha,\beta,\gamma,N)\,.
\end{align}
\end{subequations}
Numerous possibilities for $\ox;\oalpha,\obeta,\ogamma,\oN$ have been tested with a mathematical software, and the relations found have then been proved by a direct computation. By this method, we find that for the following possibilities relations of the form \eqref{eq:BIcBI} hold:
\begin{itemize}
    \item[(B1)] $N$ even, $\ox=x\,,\ \oalpha=\beta-1\,,\ \obeta=\alpha+1\,,\ \ogamma=-\gamma\,,\ \oN=N$ with
    \begin{align}
      & z_i^0=1\,,\quad 
       z_i^-=\begin{cases}i^e(\beta+i^e)\,,\ i\ \text{even}\\
         (i^e-N^e)(\beta+\gamma+i^e)\,, \ i\ \text{odd}\\
       \end{cases}\,,\quad w_i^+=1\,,\nonumber\\
       & \omega_x= \gamma-2\alpha-\frac{N+3}{2} \,,\ w_i^0=-\begin{cases}
          (\alpha-\gamma+i^e+1)(\alpha+\beta+N^e+i^e+1)\,, \,i\ \text{even}\\
         (\alpha+i^e+1)(\alpha+\beta+i^e+1)\,, \ i\ \text{odd}\\
       \end{cases} \,,\nonumber
    \end{align}
    \item[(B2)] $N$ even,  $\ox=x\,,\ \oalpha=\beta\,,\ \obeta=\alpha\,,\ \ogamma=-\gamma+1\,,\ \oN=N-1$  with
     \begin{align}
      & z_i^0=(-1)^{i_p}\,,\quad 
       z_i^-=\begin{cases}i^e(\alpha+i^e)\,,i\ \text{even}\\
          -(\alpha+\beta+1 +i^e+N^e)(\beta+\gamma+i^e)\,,  i\ \text{odd}\\
       \end{cases}\,,\quad w_i^+=(-1)^{i_p}\,,\nonumber\\
       &\omega_x=-\gamma+\frac{1-N}{2}
    \,,\quad w_i^0=\begin{cases}
           (i^e-N^e)(\alpha-\gamma+i^e+1)\,, \,i\ \text{even}\\
          -(\beta+i^e+1)(\alpha+\beta+i^e+1)\,, \ i\ \text{odd}\\
       \end{cases} \,,\nonumber
    \end{align}
     \item[(B3)] $N$ odd,  $\ox=x\,,\ \oalpha=\beta-1\,,\ \obeta=\alpha+1\,,\ \ogamma=-\gamma\,,\ \oN=N-1$  with
     \begin{align}
      & z_i^0=1\,,\quad 
       z_i^-=\begin{cases}i^e(\alpha+\beta+N^e+i^e+1)\,,i\ \text{even}\\
          (\alpha+1 +i^e)(\beta+\gamma+i^e)\,,  i\ \text{odd}\\
       \end{cases}\,,\quad w_i^+=1\,,\nonumber\\
       &\omega_x=\gamma+\frac{N}{2}\,,\quad w_i^0=\begin{cases}
           -(\beta+i^e)(\alpha-\gamma+i^e+1)\,, \,i\ \text{even}\\
          (N^e-i^e)(\alpha+\beta+i^e+1)\,, \ i\ \text{odd}\\
       \end{cases} \,,\nonumber
    \end{align}
       \item[(B4)] $N$ odd,  $\ox=x\,,\ \oalpha=\beta-1\,,\ \obeta=\alpha+1\,,\ \ogamma=-\gamma\,,\ \oN=N$  with
     \begin{align}
      & z_i^0=(-1)^{i^p}\,,\quad 
       z_i^-=\begin{cases}i^e(i^e-N^e-1)\,,i\ \text{even}\\
          -(\beta +i^e)(\beta+\gamma+i^e)\,,  i\ \text{odd}\\
       \end{cases}\,,\quad w_i^+=(-1)^{i^p}\,,\nonumber\\
       &\omega_x=2\alpha-\gamma+2+\frac{N}{2} \,,\quad w_i^0=\begin{cases}
           (\alpha+i^e+1)(\alpha-\gamma+i^e+1)\,, \,i\ \text{even}\\
          -(\alpha+\beta+i^e+1)(\alpha+\beta+N^e+i^e+2)\,, \ i\ \text{odd}\\
       \end{cases} \,,\nonumber
    \end{align}
      \item[(B5)] $N$ even, $\ox=x-1\,,\ \oalpha=\alpha-\gamma\,,\ \obeta=\beta+\gamma\,,\ \ogamma=-\gamma\,,\ \oN=N-1$ with
    \begin{align}
      & z_i^0=1\,,\quad 
       z_i^-=\begin{cases}i^e(\beta+i^e)\,,\ i\ \text{even}\\
          (\alpha-\gamma+i^e+1)(\alpha+\beta+N^e+i^e+1)\,, \ i\ \text{odd}\\
       \end{cases}\,,\quad w_i^+=1\,,\nonumber\\
       & \omega_x= \frac{N+1}{2}-\gamma \,,\quad 
       w_i^0=\begin{cases}
          (N^e-i^e)(\beta+\gamma+i^e)\,, \,i\ \text{even}\\
        - (\alpha+i^e+1)(\alpha+\beta+i^e+1)\,, \ i\ \text{odd}\\
       \end{cases} \,.\nonumber
    \end{align}
\end{itemize}
It is also natural to look for relations of the following type
\begin{subequations}
\begin{align}
&I_i(x;\alpha,\beta,\gamma,N)=
 z_i^0  B_i(\ox;\oalpha,\obeta,\ogamma,\oN)
+ \frac{z_i^-}{\alpha+\beta+i+1} B_{i-1}(\ox;\oalpha,\obeta,\ogamma,\oN)\,,\\
 &    \frac12 \left( x(-1)^x-(-1)^x\left(\gamma+N^e+\frac{1}{2}\right)+\omega_x \right) B_i(\ox;\oalpha,\obeta,\ogamma,\oN)\nonumber\\
 &\qquad = \frac{w_i^0}{\alpha+\beta+i+1} I_i(x;\alpha,\beta,\gamma,N)+ w_i^+ I_{i+1}(x;\alpha,\beta,\gamma,N)\,.
\end{align}
\end{subequations}
We find the following relations:
\begin{itemize}
    \item[(I1)] $N$ even\,, $\ox=x\,,\ \oalpha=\beta\,,\ \obeta=\alpha+1\,,\ \ogamma=-\gamma\,,\ \oN=N$ with
     \begin{align}
      & z_i^0=1\,,\quad 
       z_i^-=\begin{cases}-i^e(\alpha+i^e+1)\,,\ i\ \text{even}\\
          (\alpha-\gamma+i^e+1)(N^e-i^e)\,, \ i\ \text{odd}\\
       \end{cases}\,,\quad w_i^+=1\,,\nonumber\\
       & \omega_x=2\beta+ \gamma+\frac{N+1}{2}\,,\quad
        w_i^0=\begin{cases}
           (\beta+i^e)(\alpha+\beta+i^e+1)\,, \,i\ \text{even}\\
          (\alpha+\beta+N^e+i^e+2)(\beta+\gamma+i^e+1)\,, \ i\ \text{odd}\\
       \end{cases} \,,\nonumber
    \end{align}
      \item[(I2)] $N$ even$\,,\ \ox=x\,,\ \oalpha=\beta-1\,,\ \obeta=\alpha+2\,,\ \ogamma=-\gamma-1\,,\ \oN=N-1$ with
     \begin{align}
      & z_i^0=1\,,\quad 
       z_i^-=\begin{cases}-i^e(\alpha+\beta+i^e+N^e+1)\,,\ i\ \text{even}\\
          -(\beta+i^e)(\alpha-\gamma+i^e+1)\,, \ i\ \text{odd}\\
       \end{cases}\,,\quad w_i^+=1\,,\nonumber\\
       & \omega_x= \gamma-\frac{N-1}{2}\,,\quad
        w_i^0=\begin{cases}
           (\alpha+\beta+i^e+1)(i^e-N^e)\,, \,i\ \text{even}\\
          (\alpha+i^e+2)(\beta+\gamma+i^e+1)\,, \ i\ \text{odd}\\
       \end{cases} \,,\nonumber
    \end{align}
    \item[(I3)] $N$ odd$\,,\ \ox=x\,,\ \oalpha=\beta\,,\ \obeta=\alpha+1\,,\ \ogamma=-\gamma\,,\ \oN=N$ with
     \begin{align}
      & z_i^0=(-1)^{i^p}\,,\quad 
       z_i^-=\begin{cases}i^e(i^e-N^e-1)\,,\ i\ \text{even}\\
          -(\alpha+i^e+1)(\alpha-\gamma+i^e+1)\,, \ i\ \text{odd}\\
       \end{cases}\,,\quad w_i^+=-(-1)^{i^p}\,,\nonumber\\
       & \omega_x=-2\beta- \gamma-\frac{N+2}{2}\,,\quad
        w_i^0=\begin{cases}
           -(\alpha+\beta+N^e+i^e+2)(\alpha+\beta+i^e+1)\,, \,i\ \text{even}\\
          (\beta+i^e+1)(\beta+\gamma+i^e+1)\,, \ i\ \text{odd}\\
       \end{cases} \,,\nonumber
    \end{align}
    \item[(I4)]  $N$ odd$\,,\ \ox=x\,,\ \oalpha=\beta\,,\ \obeta=\alpha+1\,,\ \ogamma=-\gamma\,,\ \oN=N-1$ with
     \begin{align}
      & z_i^0=(-1)^{i^p}\,,\quad 
       z_i^-=\begin{cases}i^e(\beta+i^e)\,,\ i\ \text{even}\\
          -(\alpha+\beta+N^e+i^e+2)(\alpha-\gamma+i^e+1)\,, \ i\ \text{odd}
       \end{cases}\,,\quad w_i^+=-(-1)^{i^p}\,,\nonumber\\
       & \omega_x=- \gamma+\frac{N}{2}\,,\quad
        w_i^0=\begin{cases}
          - (\alpha+\beta+i^e+1)(\alpha+i^e+1)\,, \,i\ \text{even}\\
          (i^e-N^e)(\beta+\gamma+i^e+1)\,, \ i\ \text{odd}
       \end{cases} \,,\nonumber
    \end{align}
    \item[(I5)]  $N$ odd$\,,\ \ox=x-1\,,\ \oalpha=\alpha-\gamma+1\,,\ \obeta=\beta+\gamma\,,\ \ogamma=-\gamma+1\,,\ \oN=N-1$ with
     \begin{align}
      & z_i^0=1\,,\quad 
       z_i^-=\begin{cases}-i^e(\beta+\gamma+i^e)\,,\ i\ \text{even}\\
        - (\alpha+i^e+1)(\alpha+\beta+N^e+i^e+2)\,, \ i\ \text{odd}
       \end{cases}\,,\quad w_i^+=-1\,,\nonumber\\
       & \omega_x= \gamma+\frac{N}{2}\,,\quad
        w_i^0=\begin{cases}
           -(\alpha-\gamma+i^e+1)(\alpha+\beta+i^e+1)\,, \,i\ \text{even}\\
          (\beta+i^e+1)(N^e-i^e)\,, \ i\ \text{odd}
       \end{cases} \,.\nonumber
    \end{align}
\end{itemize}

The constraints $(\mathfrak{C}_{B_2})$ and $(\mathfrak{C}_{B'_2})$ for the Bannai--Ito polynomials are solved. We proved that all these relations are obtained by applying the previous relations twice.

\section{Generalizations of contiguity relations for $_4\phi_3$ functions \label{sec:gen} }

The $q$-Racah polynomials are defined using $1$-balanced ${}_4\phi_3$ functions (see the introduction of Subsection \ref{sec:A2qRacah}). 
In the following, we want to obtain contiguity relations for ${}_4\phi_3$ functions of the form:
\begin{align}
    R^{(G)}_i(x;\alpha,\beta,\gamma,\delta,N,z)={}_4\phi_3 \left({{q^{-i},\;\alpha\beta q^{i+1}, \;q^{-x},\;\gamma q^{x-N}}\atop
{\alpha q,\; \delta,\;q^{-N}}}\;\Bigg\vert \; q;z\right)\,.
\end{align}
We recover the definition of $q$-Racah polynomials by imposing $\delta=\beta\gamma q$ and $z=q$.

To obtain contiguity relations for these ${}_4\phi_3$ functions, numerous possibilities for $\ox,\oalpha,\obeta,\ogamma,\oN$ have been tested with a mathematical
software, then the relations found have been proved by direct computation.

We found that the contiguity relations \eqref{eq:cons1} of type (qRI), (qRII) and (qRIV) can be generalized to, respectively,
\begin{itemize}
    \item[(GI)]
    \begin{align}
    R^{(G)}_{i}(x;\alpha,\beta,\gamma,\delta,N,z) =&\frac{(1-\alpha\beta q^{i+1})(q^i-q^N)}{(1-q^N)(1-\alpha\beta q^{2i+1})}
    R^{(G)}_i(x;\alpha,\beta q,\gamma/q,\delta,N-1,z)\nonumber\\
    +&\frac{(1-\alpha\beta q^{N+i+1})(1-q^i)}{(1-q^N)(1-\alpha\beta q^{2i+1})}
    R^{(G)}_{i-1}(x;\alpha,\beta q,\gamma/q,\delta,N-1,z) \,;\nonumber
\end{align}
\item[(GII)]
\begin{align}
R^{(G)}_{i}(x;\alpha,\beta,\gamma,\delta,N,z)=& \frac{(1-\delta q^{i})(1-\alpha\beta q^{i+1})}{(1-\delta)(1-\alpha\beta q^{2i+1})}
    R^{(G)}_i(x;\alpha,\beta q,\gamma,\delta q,N,z) \nonumber\\
    -&\frac{(1- q^{i})(\delta-\alpha\beta q^{i+1})}{(1-\delta)(1-\alpha\beta q^{2i+1})}
    R^{(G)}_{i-1}(x;\alpha,\beta q,\gamma,\delta q,N,z) \,;\nonumber
\end{align}
\item[(GIV)]
\begin{align}
    R^{(G)}_{i}(x;\alpha,\beta,\gamma,\delta,N,z)=&\frac{(1-\alpha q^{i+1})(1-\alpha\beta q^{i+1})}{(1-\alpha q)(1-\alpha\beta q^{2i+1})} R^{(G)}_i(x;\alpha q,\beta,\gamma,\delta,N,z)\nonumber\\
    -& \frac{\alpha q(1-\beta q^{i})(1-q^{i})}{(1-\alpha q)(1-\alpha\beta q^{2i+1})}R^{(G)}_{i-1}(x;\alpha q,\beta,\gamma,\delta,N,z) \,.\nonumber
\end{align}
\end{itemize}
The contiguity relation \eqref{eq:cons3} of type (qRIII) can be generalized and reads as follows:
\begin{itemize}
    \item[(GIII)] 
    \begin{align}
&(1-q^{-x})(1-\gamma q^{x-N}) R^{(G)}_i(x-1;\alpha q,\beta,\gamma q,\delta q,N-1,z) \nonumber \\
&\quad =\frac{q^{i-N+1}(1-\alpha q)(1- \delta)(1-q^N)}{z(1-\alpha\beta q^{2i+2})}\left(R^{(G)}_{i+1}(x;\alpha,\beta,\gamma,\delta,N,z)-R^{(G)}_i(x;\alpha,\beta,\gamma,\delta,N,z)\right) \,.\nonumber
\end{align}
\end{itemize}

For the above relations  (GI), (GII), (GIII) and (GIV), if the  ${}_4\phi_3$ functions are $k$-balanced on the r.h.s., then they are also $k$-balanced on the l.h.s. of the equations. 
There are equations which do not preserve the balance and which are given by
\begin{itemize}
    \item[(GV)] \begin{align}
R^{(G)}_i(x;\alpha,\beta,\gamma,\delta,N,z)=&\frac{1-\alpha\beta q^{i+1}}{1-\alpha\beta q^{2i+1}} R^{(G)}_i(x;\alpha,\beta q,\gamma,\delta,N,z)\nonumber\\
+&\frac{\alpha\beta q^{i+1}(1-q^i)}{1-\alpha\beta q^{2i+1}} R^{(G)}_{i-1}(x;\alpha,\beta q,\gamma,\delta,N,z) \,,   \nonumber  
\end{align}
  \item[(GVI)] \begin{align}
R^{(G)}_i(x;\alpha,\beta,\gamma,\delta,N,z)=&\frac{q^i(1-\alpha\beta q^{i+1})}{1-\alpha\beta q^{2i+1}} R^{(G)}_i(x;\alpha,\beta q,\gamma,\delta,N,z/q)\nonumber\\
+&\frac{1-q^i}{1-\alpha\beta q^{2i+1}} R^{(G)}_{i-1}(x;\alpha,\beta q,\gamma,\delta,N,z/q) \,.    \nonumber  
\end{align}
\end{itemize}
There exist similar relations for other ($q$-)hypergeometric functions that can be obtained by taking suitable limits, but we do not provide a list here. 

\paragraph{Acknowledgements:}
The authors thank Satoshi Tsujimoto and Maxim Derevyagin for fruitful discussions.
N.~Cramp\'e is partially supported by the international research project AAPT of the CNRS.  L.~Vinet is funded in part by a Discovery Grant from the Natural Sciences and Engineering Research Council (NSERC) of Canada. M.~Zaimi 
is supported by a postdoctoral fellowship from NSERC
and by Perimeter Institute for Theoretical Physics. Research at Perimeter Institute is supported in part by the Government of Canada through the Department of Innovation, Science and Economic Development and by the Province of Ontario through the Ministry of Colleges and Universities.
L. Morey wishes to thank Luc Vinet for her postdoctoral fellowship and the CRM for its hospitality.

\appendix

\section{$B_2$-contiguity relations for $q$-Racah polynomials \label{app:B2}} 

The first type of $B_2$-contiguity relations reads as follows
\begin{align}
&\lambda^{+}_{x; \rr}\; R_i(x; \rr)= \Phi^{+1,+}_{i}\ R_{i+1}(\ox; \orr)+\Phi^{0,+}_{i}\ R_{i}(\ox; \orr)+\Phi^{-1,+}_{i}\ R_{i-1}(\ox; \orr)\,.
\end{align}
The second type of the $B_2$-contiguity relations can be obtained from the previous one. 
As proved in this paper, all the $B_2$-contiguity relations for the $q$-Racah polynomials can be obtained from the $A_2$-contiguity relations following the procedure described in Remark \ref{rmk:B2-from-A2}.

In the following, the complete list of the $B_2$-contiguity relations for $q$-Racah polynomials is given (the names refer to the two $A_2$-contiguity relations used to compute the $B_2$-contiguity relations):
\begin{enumerate}
\item[(qRI/II)] $\ox=x, \quad \oalpha=\alpha,\quad \obeta=\beta,\quad \ogamma=\gamma/q\,,\quad \oN=N-1$
\begin{align*}
 &  \lambda_{x,\rr}^+=\frac{(1-\beta\gamma q^{x})(1-\beta q^{N-x})}{\beta q(1-\beta\gamma)}\,,
 \\
  &  \Phi_i^{+1,+}=\frac{(1-q^{i-N})(1-\alpha\beta q^{i+1})(q^i-q^{N-1})(1-\alpha q^{i+1})}
  {(1-q^{-N})(1-\alpha\beta q^{2i+1})(1-\alpha \beta q^{2i+2})}\,,
  \\
    &  \Phi_i^{-1,+}=\frac{(1-q^i)(1-\alpha\beta q^{N+i+1})(1-\beta q^i)(1-\alpha\beta q^{N+i})}{\beta q(1-q^N)(1-\alpha\beta q^{2i})(1-\alpha \beta q^{2i+1})}\,,
    \\
     &  \Phi_i^{0+}=\lambda_{0,\rr}^+-\Phi_i^{+1,+}-\Phi_i^{-1,+}\,,
  \\
\end{align*}
\item[(qRI/III)] $\ox=x +1, \quad \oalpha=\alpha/q,\quad \obeta=\beta q,\quad \ogamma=\gamma/q^2\,,\quad \oN=N$
\begin{align*}
 &  \lambda_{x,\rr}^+=\frac{(1- q^{-x-1})(1-\gamma q^{x-N-1})}{(1-\beta\gamma )(1-\alpha)}\,,
 \\
  &  \Phi_i^{+1,+}=-\frac{q^{i}(1-q^{i-N})(1-\alpha\beta q^{i+1})}
  {(1-\alpha\beta q^{2i+1})(1-\alpha \beta q^{2i+2})}\,,
  \\
    &  \Phi_i^{-1,+}=-\frac{q^{i-N-1}(1-q^i)(1-\alpha\beta q^{N+i+1})}{(1-\alpha\beta q^{2i})(1-\alpha \beta q^{2i+1})}\,,
    \\
      &  \Phi_i^{0+}=-\Phi_i^{+1,+}-\Phi_i^{-1,+}\,,\\
\end{align*}
\item[(qRI/IV)] $\ox=x, \quad \oalpha=\alpha/q,\quad \obeta=\beta q,\quad \ogamma=\gamma/q\,,\quad \oN=N-1$
\begin{align*}
 &  \lambda_{x,\rr}^+=\frac{(1-\alpha q^{x})(\gamma-\alpha q^{N-x})}{\alpha q(1-\alpha )}\,,
 \\
  &  \Phi_i^{+1,+}=\frac{(q^i-q^{N-1})(q^{i}-q^N)(1-\alpha\beta q^{i+1})(1-\beta\gamma q^{i+1})}
  {(1-q^{N})(1-\alpha\beta q^{2i+1})(1-\alpha \beta q^{2i+2})}\,,
  \\
    &  \Phi_i^{-1,+}=\frac{(1-q^i)(1-\alpha\beta q^{N+i})(1-\alpha\beta q^{N+i+1})(\gamma-\alpha q^{i})}{\alpha q(1-q^N)(1-\alpha\beta q^{2i})(1-\alpha \beta q^{2i+1})}\,,
    \\
     &  \Phi_i^{0+}=\lambda_{0,\rr}^+-\Phi_i^{+1,+}-\Phi_i^{-1,+}\,,\\
\end{align*}
\item[(qRII/I)] $\ox=x, \quad \oalpha=\alpha,\quad \obeta=\beta,\quad\ogamma=\gamma q\,,\quad \oN=N+1$
\begin{align*}
 &  \lambda_{x,\rr}^+=\frac{(1-\gamma q^{x+1})(1-q^{N-x+1})}{1-q^{N+1}}\,,
 \\
  &  \Phi_i^{+1,+}=\frac{(1-\alpha\beta q^{i+1})(1-\beta\gamma q^{i+1})(1-\beta\gamma q^{i+2})(1-\alpha q^{i+1})}
  {(1-\beta\gamma q)(1-\alpha\beta  q^{2i+1})(1-\alpha\beta  q^{2i+2})}\,,
  \\
    &  \Phi_i^{-1,+}=\frac{\beta q^2(1-q^i)(1-\beta q^i)(\gamma-\alpha q^{i-1})(\gamma-\alpha q^i)}{(1-\beta\gamma q)(1-\alpha\beta q^{2i})(1-\alpha\beta q^{2i+1})}\,,
    \\
    &  \Phi_i^{0+}=\lambda_{0,\rr}^+-\Phi_i^{+1,+}-\Phi_i^{-1,+}\,,\\
\end{align*}
\item[(qRII/III)] $\ox= x +1, \quad \oalpha=\alpha/q,\quad \obeta=\beta q,\quad \ogamma=\gamma /q\,,\quad \oN=N+1$
\begin{align*}
 &  \lambda_{x,\rr}^+=\frac{(1- q^{-x-1})(1-\gamma q^{x-N-1})}{(1-\alpha )(1-q^{N+1})}\,,
 \\
  &  \Phi_i^{+1,+}=\frac{q^{i-N-1}(1-\alpha\beta q^{i+1})(1-\beta\gamma q^{i+1})}{(1-\alpha\beta  q^{2i+1})(1-\alpha\beta  q^{2i+2})}\,,
  \\
    &  \Phi_i^{-1,+}=\frac{\beta q^{i-N-1}(1-q^i)(\gamma-\alpha q^i)}{(1-\alpha\beta q^{2i})(1-\alpha\beta q^{2i+1})}\,,
    \\
      &  \Phi_i^{0+}=-\Phi_i^{+1,+}-\Phi_i^{-1,+}\,,\\
\end{align*}
\item[(qRII/IV)] $\ox=x, \quad \oalpha=\alpha/q,\quad \obeta=\beta q,\quad \ogamma=\gamma\,,\quad \oN=N$
\begin{align*}
 &  \lambda_{x,\rr}^+=\frac{(1-\alpha q^{x})(\gamma-\alpha q^{N-x})}{\alpha(1-\alpha)}\,,
 \\
  &  \Phi_i^{+1,+}=-\frac{(q^N-q^i)(1-\alpha\beta q^{i+1})(1-\beta\gamma q^{i+1})(1-\beta\gamma q^{i+2})}{(1-\beta\gamma q)(1-\alpha\beta  q^{2i+1})(1-\alpha\beta  q^{2i+2})}\,,
  \\
    &  \Phi_i^{-1,+}=-\frac{\beta q(1-q^i)(\gamma-\alpha q^{i-1})(\gamma-\alpha q^i)(1-\alpha\beta q^{N+i+1})}{\alpha(1-\beta\gamma q)(1-\alpha\beta q^{2i})(1-\alpha\beta q^{2i+1})}\,,
    \\
      &  \Phi_i^{0+}=\lambda_{0,\rr}^+-\Phi_i^{+1,+}-\Phi_i^{-1,+}\,,\\
\end{align*}
\item[(qRIII/I)] $\ox=x-1, \quad \oalpha=\alpha q,\quad \obeta=\beta/q,\quad \ogamma=\gamma q^2\,,\quad \oN=N$
\begin{align*}
 &  \lambda_{x,\rr}^+=(1-\gamma q^{x+1})(1-q^{N-x+1})\,,
 \\
  &  \Phi_i^{+1,+}=\frac{(1-q^{N-i})(1-\alpha q^{i+1})(1-\alpha q^{i+2})(1-\alpha\beta q^{i+1})(1-\beta\gamma q^{i+1})(1-\beta\gamma q^{i+2})}{(1-\alpha q)(1-\beta\gamma q)(1-\alpha\beta q^{2i+1})(1-\alpha\beta q^{2i+2})}\,,
  \\
  &  \Phi_i^{0+}=\frac{q(1-\alpha q^{i+1})(1-\beta q^i)(1-\beta\gamma q^{i+1})(\alpha q^i-\gamma)}{(1-\alpha q)(1-\beta\gamma q)(1-\alpha\beta q^{2i+1})}\\
 & \qquad\quad  \times\left(
  \frac{q(1-q^{N-i})(1-\alpha \beta q^{i+1})}{1-\alpha \beta q^{2i+2}}
  +\frac{(1-q^{-i})(1-\alpha\beta q^{N+i+1})}{1-\alpha\beta q^{2i}}
  \right)\,,
  \\
    &  \Phi_i^{-1,+}=\frac{q^3(1-q^{-i})(\alpha q^{i-1}-\gamma)(\alpha q^{i}-\gamma)(1-\beta q^{i-1})(1-\beta q^i)(1-\alpha\beta q^{N+i+1})}{(1-\alpha q)(1-\beta\gamma q)(1-\alpha\beta q^{2i})(1-\alpha\beta q^{2i+1})}\,,
    \\
\end{align*}
\item[(qRIII/II)] $\ox=x-1, \quad \oalpha=\alpha q,\quad \obeta=\beta/q,\quad \ogamma=\gamma q\,,\quad \oN=N-1$
\begin{align*}
 &  \lambda_{x,\rr}^+=\frac{(1-\beta\gamma q^{x})(1-\beta q^{N-x})}{\beta}\,,
 \\
  &  \Phi_i^{+1,+}=\frac{(1-q^{N-i})(q^i-q^{N-1})(1-\alpha q^{i+1})(1-\alpha q^{i+2})(1-\alpha\beta q^{i+1})(1-\beta\gamma q^{i+1})}{(1-q^N)(1-\alpha q)(1-\alpha\beta q^{2i+1})(1-\alpha\beta q^{2i+2})}\,,
  \\
  &  \Phi_i^{0+}=\frac{(1-q^{N-i})(1-\alpha q^{i+1})(1-\beta q^i)(1-\alpha\beta q^{N+i+1})}{(1-q^N)(1-\alpha q)(1-\alpha\beta q^{2i+1})}\\
  &\qquad \quad \left(
  \frac{(1-\alpha\beta q^{i+1})(1-\beta\gamma q^{i+1})}{\beta(1-\alpha\beta q^{2i+2})}+
  \frac{(1-q^i)(\gamma-\alpha q^i)}{1-\alpha \beta q^{2i}}
  \right)\,,
  \\
    &  \Phi_i^{-1,+}=\frac{q(1-q^i)(\gamma q^{-i}-\alpha )(1-\beta q^{i-1})(1-\beta q^i)(1-\alpha\beta q^{N+i})(1-\alpha\beta q^{N+i+1})}{\beta(1-q^N)(1-\alpha q)(1-\alpha\beta q^{2i})(1-\alpha\beta q^{2i+1})}\,,
    \\
\end{align*}
\item[(qRIII/IV)] $\ox=x-1, \quad \oalpha=\alpha,\quad \obeta=\beta,\quad \ogamma=\gamma q\,,\quad \oN=N-1$
\begin{align*}
 &  \lambda_{x,\rr}^+=\frac{(1-\alpha q^{x})(\gamma-\alpha q^{N-x})}{\alpha}\,,
 \\
  &  \Phi_i^{+1,+}=\frac{(1-q^{N-i})(q^i-q^{N-1})(1-\alpha q^{i+1})(1-\alpha\beta q^{i+1})(1-\beta\gamma q^{i+1})(1-\beta\gamma q^{i+2})}{(1-q^N)(1-\beta\gamma q)(1-\alpha\beta q^{2i+1})(1-\alpha\beta q^{2i+2})}\,,
  \\
  &  \Phi_i^{0+}=\frac{(1-q^{N-i})(1-\beta\gamma q^{i+1})(1-\alpha\beta q^{N+i+1})(\gamma-\alpha q^i)}{(1-q^N)(1-\beta \gamma q)(1-\alpha\beta q^{2i+1})}\\
  &\qquad \quad \times \left(
  \frac{(1-\alpha q^{i+1})(1-\alpha\beta q^{i+1})}{\alpha(1-\alpha\beta q^{2i+2})}
  +\frac{(1-q^i)(1-\beta q^i)}{1-\alpha\beta q^{2i}}
  \right)\,,
  \\
    &  \Phi_i^{-1,+}=\frac{(1-q^i)(\gamma q^{1-i}-\alpha q)(\gamma-\alpha q^{i-1})(1-\beta q^i)(1-\alpha\beta q^{N+i})(1-\alpha\beta q^{N+i+1})}{\alpha(1-q^N)(1-\beta\gamma q)(1-\alpha\beta q^{2i})(1-\alpha\beta q^{2i+1})}\,,
    \\
\end{align*}
\item[(qRIV/I)] $\ox=x, \quad \oalpha=\alpha q,\quad \obeta=\beta/q,\quad \ogamma=\gamma q\,,\quad \oN=N+1$
\begin{align*}
 &  \lambda_{x,\rr}^+=\frac{(1-\gamma q^{x+1})(1-q^{N-x+1})}{1-q^{N+1}}\,,
 \\
  &  \Phi_i^{+1,+}=\frac{(1-\alpha q^{i+1})(1-\alpha q^{i+2})(1-\alpha\beta q^{i+1})(1-\beta\gamma q^{i+1})}{(1-\alpha q)(1-\alpha \beta q^{2i+1})(1-\alpha\beta q^{2i+2})}\,,
  \\
    &  \Phi_i^{-1,+}=-\frac{\alpha q^2(1-q^i)(1-\beta q^{i-1})(1-\beta q^i)(\alpha q^i-\gamma)}{(1-\alpha q)(1-\alpha\beta q^{2i})(1-\alpha \beta q^{2i+1})}\,,
    \\
    &  \Phi_i^{0+}=\lambda_{0,\rr}^+-\Phi_i^{+1,+}-\Phi_i^{-1,+}\,,\\
\end{align*}
\item[(qRIV/II)] $\ox=x, \quad \oalpha=\alpha q,\quad \obeta=\beta/q,\quad \ogamma=\gamma\,,\quad \oN=N$
\begin{align*}
 &  \lambda_{x,\rr}^+=\frac{(1-\beta\gamma q^{x})(1-\beta q^{N-x})}{\beta(1-\beta\gamma)}\,,
 \\
  &  \Phi_i^{+1,+}=\frac{(1-\alpha q^{i+1})(1-\alpha \beta q^{i+1})(q^i-q^N)(1-\alpha q^{i+2})}{(1-\alpha q)(1-\alpha \beta q^{2i+1})(1-\alpha \beta q^{2i+2})}\,,
  \\
    &  \Phi_i^{-1,+}=-\frac{\alpha q(1-q^i)(1-\beta q^{i-1})(1-\beta q^i)(1-\alpha \beta q^{N+i+1})}{\beta(1-\alpha q)(1-\alpha \beta q^{2i})(1-\alpha\beta q^{2i+1})}
    \\
      &  \Phi_i^{0+}=\lambda_{0,\rr}^+-\Phi_i^{+1,+}-\Phi_i^{-1,+}\,,\\
\end{align*}
\item[(qRIV/III)] $\ox=x +1, \quad \oalpha=\alpha,\quad \obeta=\beta,\quad \ogamma=\gamma/q\,,\quad \oN=N+1$
\begin{align*}
 &  \lambda_{x,\rr}^+=\frac{(1- q^{-x-1})(1-\gamma q^{x-N-1})}{(1-\beta\gamma )(1-q^{N+1})}\,,
 \\
  &  \Phi_i^{+1,+}=\frac{q^{i-N-1}(1-\alpha q^{i+1})(1-\alpha\beta q^{i+1})}{(1-\alpha\beta q^{2i+1})(1-\alpha\beta q^{2i+2})}\,,
  \\
    &  \Phi_i^{-1,+}=\frac{\alpha q^{i-N-1}(1-q^i)(1-\beta q ^i)}{(1-\alpha\beta q^{2i})(1-\alpha\beta q^{2i+1})}\,.
    \\
    &  \Phi_i^{0+}=-\Phi_i^{+1,+}-\Phi_i^{-1,+}\,,\\
\end{align*}
\end{enumerate}
The $B_2$-contiguity relations for the other types of polynomials can be obtained by the different limits specified at the end of each of the Subsections \ref{sec:qHahn}-\ref{sec:kraw}.

\section{$B'_2$-contiguity relations for $q$-Racah polynomials \label{app:B2p}} 

The $B'_2$-contiguity relations read as follows
\begin{align}
& R_i(x; \rr)= \Phi^{0,+}_{i}\ R_{i}(\ox; \orr)
+\Phi^{-1,+}_{i}\ R_{i-1}(\ox; \orr)
+\Phi^{-2,+}_{i}\ R_{i-2}(\ox; \orr)\,,\\
&\lambda^{-}_{x; \rr}\;  R_i(\ox; \orr)=\Phi^{+2,-}_{i\ }\ R_{i+2}(x; \rr)+\Phi^{+1,-}_{i\ }\ R_{i+1}(x; \rr)+\Phi^{0,-}_{i\ }\ R_{i}(x; \rr)\,.
\end{align}
Let us emphasize that for all relations, one can choose
$\lambda_{x,\rr}^+=1$.
As proved in this paper, all the $B'_2$-contiguity relations for the $q$-Racah polynomials can be obtained from the $A_2$-contiguity relations following the procedure described in Remark \ref{rmk:B2'-from-A2}.

In the following, the complete list of the $B_2$-contiguity relations for $q$-Racah polynomials is given (the names refer to the two $A_2$-contiguity relations used to compute the $B'_2$-contiguity relations):
\begin{itemize}
 \item[(qRI/I')] $\ox=x,\quad \oalpha=\alpha\,,\quad \obeta=\beta q^2\,,\quad \ogamma=\gamma/q^2\,,\quad \oN=N-2\,$
  \begin{align*}
  &  \Phi_i^{0,+}=\frac{(1-q^{i-N})(1-q^{i-N+1})(1-\alpha\beta q^{i+1})(1-\alpha\beta q^{i+2})}{(1-q^{-N})(1-q^{-N+1})(1-\alpha\beta q^{2i+1})(1-\alpha\beta q^{2i+2})}\,,
  \\
    &  \Phi_i^{-2,+}=\frac{(1-q^{i-1})(1-q^i)(1-\alpha\beta q^{N+i})(1-\alpha\beta q^{N+i+1})}{(1-q^{N-1})(1-q^{N})(1-\alpha\beta q^{2i})(1-\alpha\beta q^{2i+1})}\,,\\
     &\Phi_i^{-1,+}=1-\Phi_i^{0,+}-\Phi_i^{-2,+}\,,
\end{align*}
and
\begin{align*}
     &  \lambda^-_{x,\rr}=\frac{(1-\gamma q^{x-1})(1-\gamma q^x)(1-q^{N-x-1})(1-q^{N-x})}{(1-q^{N-1})(1-q^N)}\,,
    \\
     &  \Phi_i^{+2,-}=\frac{(1-\alpha q^{i+1})(1-\alpha q^{i+2})(1-\beta\gamma q^{i+1})(1-\beta\gamma q^{i+2})}{(1-\alpha \beta q^{2i+3})(1-\alpha \beta q^{2i+4})}
     \,,
  \\
    &  \Phi_i^{0,-}=\frac{(1-\beta q^{i+1})(1-\beta q^{i+2})(\alpha q^{i+1}-\gamma)(\alpha q^{i+2}-\gamma)}{q(1-\alpha \beta q^{2i+2})(1-\alpha \beta q^{2i+3})}
    \,,
    \\
    &  \Phi_i^{+1,-}=\lambda^-_{0,\rr}-\Phi_i^{+2,-}-\Phi_i^{0,-}\,,
  \\
\end{align*}

  \item[(qRI/II')] $\ox=x,\quad \oalpha=\alpha\,,\quad \obeta=\beta q^2\,,\quad \ogamma=\gamma/q\,,\quad \oN=N-1$
    \begin{align*}
  &  \Phi_i^{0,+}=\frac{(1-q^{i-N})(1-\beta\gamma q^{i+1})(1-\alpha\beta q^{i+1})(1-\alpha\beta q^{i+2})}{(1-q^{-N})(1-\beta\gamma q)(1-\alpha\beta q^{2i+1})(1-\alpha\beta q^{2i+2})}\,,
  \\
    &  \Phi_i^{-2,+}=-\frac{\beta q(1-q^{i-1})(1-q^i)(1-\alpha\beta q^{N+i+1})(\gamma-\alpha q^{i})}{(1-q^{N})(1-\beta\gamma q)(1-\alpha\beta q^{2i})(1-\alpha\beta q^{2i+1})}\,,\\
    &\Phi_i^{-1,+}=1-\Phi_i^{0,+}-\Phi_i^{-2,+}\,,
\end{align*}
and
\begin{align*}
     &  \lambda^-_{x,\rr}=\frac{(1-\gamma q^x)(1-q^{N-x})(1-\beta\gamma q^{x+1})(1-\beta q^{N+1-x})}{\beta q(1-q^N)(1-\beta \gamma q)}\,,
    \\
     &  \Phi_i^{+2,-}=\frac{(1-\alpha q^{i+1})(1-\alpha q^{i+2})(1-\beta\gamma q^{i+2})(q^{i+1}- q^{N})}{(1-\alpha \beta q^{2i+3})( 1-\alpha \beta q^{2i+4})}
     \,,
  \\
    &  \Phi_i^{0,-}=\frac{(1-\beta q^{i+1})(1-\beta q^{i+2})(\alpha q^{i+1}-\gamma)(1-\alpha\beta q^{N+i+2})}{\beta q(1-\alpha \beta q^{2i+2})(1-\alpha \beta q^{2i+3})}
    \,,
    \\
     &  \Phi_i^{+1,-}=\lambda^-_{0,\rr}-\Phi_i^{+2,-}-\Phi_i^{0,-}\,,
  \\
\end{align*}

   \item[(qRI/III')] $\ox=x-1,\quad \oalpha=\alpha q\,,\quad \obeta=\beta q\,,\quad \ogamma=\gamma\,,\quad \oN=N-2$
  \begin{align*}
  &  \Phi_i^{0,+}= \frac{(1-q^{i-N})(1-q^{N-i-1})(1-\alpha\beta q^{i+1})(1-\alpha\beta q^{i+2})(1-\alpha q^{i+1})(1-\beta\gamma q^{i+1})}{(1-q^{-N})(1-q^{N-1})(1-\alpha q)(1-\beta\gamma q)(1-\alpha \beta q^{2i+1})(1-\alpha \beta q^{2i+2})}\,,
  \\
  &  \Phi_i^{-1,+}=\frac{(1-q^{N-i})(1-\alpha\beta q^{i+1})(1-q^i)(1-\alpha\beta q^{N+i+1})}{(1-\alpha q)(1-\beta\gamma q)(1-q^{N-1})(1-q^N)(1-\alpha\beta q^{2i+1})}\\
  &\qquad\quad\times\left( \frac{(\gamma-\alpha q^{i+1})(1-\beta q^{i+1})}{1-\alpha\beta q^{2i+2}}
  +
  \frac{(1-\beta \gamma q^{i})(1-\alpha q^{i})}{1-\alpha\beta q^{2i}}
  \right)\,,
  \\
    &  \Phi_i^{-2,+}=\frac{(1-q^{i-1})(1-q^i)(1-\alpha\beta q^{N+i})(1-\alpha\beta q^{N+i+1})(\gamma q^{1-i}-\alpha q)(1-\beta q)}{(1-q^{N-1})(1-q^N)(1-\alpha q)(1-\beta\gamma q)(1-\alpha \beta q^{2i})(1-\alpha\beta q^{2i+1})}\,,
\end{align*}
and
\begin{align*}
     &  \lambda^-_{x,\rr}=\frac{(1-\gamma q^x)(1-q^{N-x})(1-q^{-x})(1-\gamma q^{x-N})}{(1-q^{N-1})(1-q^{N})(1-\alpha q)(1-\beta \gamma q)}\,,
    \\
     &  \Phi_i^{+2,-}=\frac{q^{i-N+1}(1-\alpha q^{i+2})(1-\beta\gamma q^{i+2})}{(1-\alpha \beta q^{2i+3})( 1-\alpha \beta q^{2i+4})}
     \,,
  \\
    &  \Phi_i^{0,-}=-
    \frac{q^{i-N+1}(1-\beta q^{i+1})(\alpha q^{i+1}-\gamma)}{(1-\alpha \beta q^{2i+2})(1-\alpha \beta q^{2i+3})}
    \,,
    \\
     &  \Phi_i^{+1,-}=-\Phi_i^{+2,-}-\Phi_i^{0,-}\,,
  \\
\end{align*}

   \item[(qRI/IV')] $\ox=x,\quad \oalpha=\alpha q\,,\quad \obeta=\beta q\,,\quad \ogamma=\gamma/q\,,\quad \oN=N-1$
      \begin{align*}
  &  \Phi_i^{0,+}=\frac{(1-q^{i-N})(1-\alpha\beta q^{i+1})(1-\alpha\beta q^{i+2})(1-\alpha q^{i+1})}{(1-q^{-N})(1-\alpha q)(1-\alpha \beta q^{2i+1})(1-\alpha \beta q^{2i+2})}\,,
  \\
    &  \Phi_i^{-2,+}=-\frac{\alpha q(1-q^{i-1})(1-q^i)(1-\alpha\beta q^{N+i+1})(1-\beta q^i)}{(1-q^N)(1-\alpha q)(1-\alpha\beta q^{2i})(1-\alpha\beta q^{2i+1})}\,,\\
     &  \Phi_i^{-1,+}=1-\Phi_i^{0,+}-\Phi_i^{-2,+}\,,
  \\
\end{align*}
and
\begin{align*}
     &  \lambda^-_{x,\rr}=\frac{(1-\gamma q^x)(1-q^{N-x})(1-\alpha q^{x+1})(\gamma-\alpha q^{N+1-x})}{\alpha q(1-q^N)(1-\alpha q)}\,,
    \\
     &  \Phi_i^{+2,-}=-\frac{(q^N-q^{i+1})(1-\alpha q^{i+2})(1-\beta\gamma q^{i+1})(1-\beta\gamma q^{i+2})}{(1-\alpha \beta q^{2i+3})(1-\alpha\beta q^{2i+4})}
     \,,
  \\
    &  \Phi_i^{0,-}=
    \frac{(1-\beta q^{i+1})(\alpha q^{i+2}-\gamma)(1-\alpha\beta q^{N+i+2})(\gamma-\alpha q^{i+1})}{\alpha q(1-\alpha\beta q^{2i+2})(1-\alpha \beta q^{2i+3})}
    \,,
    \\
     &  \Phi_i^{+1,-}=\lambda^-_{0,\rr}-\Phi_i^{+2,-}-\Phi_i^{0,-}\,,
  \\
\end{align*}

    \item[(qRII/II')] $\ox=x,\quad \oalpha=\alpha \,,\quad \obeta=\beta q^2\,,\quad \ogamma=\gamma\,,\quad \oN=N$
      \begin{align*}
  &  \Phi_i^{0,+}=\frac{(1-\alpha\beta q^{i+1})(1-\alpha\beta q^{i+2})(1-\beta\gamma q^{i+1})(1-\beta\gamma q^{i+2})}{(1-\beta \gamma q)(1-\beta \gamma q^2)(1-\alpha \beta q^{2i+1})(1-\alpha\beta q^{2i+2})}\,,
  \\
   &  \Phi_i^{-2,+}=\frac{\beta^2 q^3(1-q^{i-1})(1-q^i)(\gamma-\alpha q^{i-1})(\gamma-\alpha q^i)}{(1-\beta \gamma q)(1-\beta \gamma q^2)(1-\alpha\beta q^{2i})(1-\alpha\beta q^{2i+1})}\,,\\
     &  \Phi_i^{-1,+}=1-\Phi_i^{0,+}-\Phi_i^{-2,+}\,,
  \\
\end{align*}
and
\begin{align*}
     &  \lambda^-_{x,\rr}=\frac{(1-\beta\gamma q^{x+1})(1-\beta\gamma q^{x+2})(1-\beta q^{N+1-x})(1-\beta q^{N+2-x})}{\beta^2 q^3(1-\beta \gamma q)(1-\beta\gamma q^2)}\,,
    \\
     &  \Phi_i^{+2,-}=\frac{(q^i- q^{N})(q^{i+1}- q^{N})(1-\alpha q^{i+1})(1-\alpha q^{i+2})}{ (1-\alpha \beta q^{2i+3})( 1-\alpha \beta q^{2i+4})}
     \,,
  \\
    &  \Phi_i^{0,-}=\frac{(1-\beta q^{i+1})(1-\beta q^{i+2})(1-\alpha\beta q^{N+i+2})(1-\alpha\beta q^{N+i+3})}{\beta^2 q^3(1-\alpha \beta q^{2i+2})(1-\alpha \beta q^{2i+3})}
\,,
    \\
     &  \Phi_i^{+1,-}=\lambda^-_{0,\rr}-\Phi_i^{+2,-}-\Phi_i^{0,-}\,,
  \\
\end{align*}

       \item[(qRII/III')] $\ox=x-1,\quad \oalpha=\alpha q\,,\quad \obeta=\beta q\,,\quad \ogamma=\gamma q\,,\quad \oN=N-1$
   \begin{align*}
  &  \Phi_i^{0,+}=\frac{(1-\alpha\beta q^{i+1})(1-\alpha\beta q^{i+2})(1-\beta\gamma q^{i+1})(1-\beta\gamma q^{i+2})(1-q^{N-i})(1-\alpha q^{i+1})}{(1-\beta \gamma q)(1-\beta \gamma q^2)(1-q^N)(1-\alpha q)(1-\alpha \beta q^{2i+1})(1-\alpha\beta q^{2i+2})}\,,
  \\
  &  \Phi_i^{-1,+}=\frac{(1-q^i)(1-\alpha\beta q^{i+1})(1-\beta\gamma q^{i+1})(\gamma q^{1-i}-\alpha q)}{(1-\beta\gamma q)(1-\beta \gamma q^2)(1-\alpha\beta q^{2i+1})(1-q^N)(1-\alpha q)}\\
  &\qquad\quad \times \left(
  \frac{(1-\beta q^{i+1})(1-\alpha\beta q^{N+i+2})}{1-\alpha\beta q^{2i+2}}
  -\frac{\beta q^i (1-q^{N-i+1})(1-\alpha q^i)}{1-\alpha\beta q^{2i}}
  \right)\,,
  \\
    &  \Phi_i^{-2,+}=-\frac{\beta q(1-q^{i-1})(1-q^i)(\gamma q^{2-i}-\alpha q)(\gamma-\alpha q^i)(1-\beta q^i)(1-\alpha\beta q^{N+i+1})}{(1-\beta \gamma q)(1-\beta \gamma q^2)(1-q^N)(1-\alpha q)(1-\alpha\beta q^{2i})(1-\alpha\beta q^{2i+1})}\,,
 \end{align*}
and
\begin{align*}
     &  \lambda^-_{x,\rr}=\frac{(1-\beta\gamma q^{x+1})(1- q^{-x})(1-\beta q^{N+1-x})(1-\gamma q^{x-N})}{\beta q(1-\beta \gamma q)(1-\beta\gamma q^2)(1-\alpha q)(1-q^N)}\,,
    \\
     &  \Phi_i^{+2,-}=\frac{q^{i+1-N}(q^i- q^{N-1})(1-\alpha q^{i+2})}{ (1-\alpha \beta q^{2i+3})( 1-\alpha \beta q^{2i+4})}
     \,,
  \\
    &  \Phi_i^{0,-}=-\frac{q^{i-N-1}(1-\beta q^{i+1})(1-\alpha\beta q^{N+i+2})}{\beta (1-\alpha \beta q^{2i+2})(1-\alpha \beta q^{2i+3})}
    \,,
    \\
     &  \Phi_i^{+1,-}=-\Phi_i^{+2,-}-\Phi_i^{0,-}\,,
  \\
\end{align*}

  \item[(qRII/IV')] $\ox=x,\quad \oalpha=\alpha q\,,\quad \obeta=\beta q\,,\quad \ogamma=\gamma\,,\quad \oN=N$
  \begin{align*}
  &  \Phi_i^{0,+}=\frac{(1-\alpha q^{i+1})(1-\beta\gamma q^{i+1})(1-\alpha\beta q^{i+1})(1-\alpha\beta q^{i+2})}{(1-\alpha q)(1-\beta \gamma q)(1-\alpha\beta q^{2i+1})(1-\alpha\beta q^{2i+2})}\,,
  \\
 &  \Phi_i^{-2,+}=\frac{\alpha\beta  q^2(1-q^{i-1})(1-q^i)(1-\beta q^i)(\gamma-\alpha q^i)}{(1-\alpha q)(1-\beta\gamma q)(1-\alpha\beta q^{2i})(1-\alpha\beta q^{2i+1})}\,,\\
    &  \Phi_i^{-1,+}=1-\Phi_i^{0,+}-\Phi_i^{-2,+}\,,
  \\
  \end{align*}
and
\begin{align*}
     &  \lambda^-_{x,\rr}=\frac{(1-\beta\gamma q^{x+1})(1-\beta q^{N+1-x})(1-\alpha q^{x+1})(\gamma-\alpha q^{N+1-x})}{\alpha\beta q^2(1-\alpha q)(1-\beta \gamma q)}\,,
    \\
     &  \Phi_i^{+2,-}=-\frac{(q^i- q^{N})(q^N-q^{i+1})(1-\alpha q^{i+2})(1-\beta\gamma q^{i+2})}{( 1-\alpha \beta q^{2i+3})(1-\alpha\beta q^{2i+4})}
    \,,
  \\
    &  \Phi_i^{0,-}=\frac{(1-\beta q^{i+1})(\gamma-\alpha q^{i+1})(1-\alpha\beta q^{N+i+2})(1-\alpha\beta q^{N+i+3})}{\alpha \beta q^2(1-\alpha\beta q^{2i+2})(1-\alpha \beta q^{2i+3})}
    \,,
    \\
     &  \Phi_i^{+1,-}=\lambda^-_{0,\rr}-\Phi_i^{+2,-}-\Phi_i^{0,-}\,,
  \\
\end{align*}

    \item[(qRIII/III')] $\ox=x-2,\quad \oalpha=\alpha q^2\,,\quad \obeta=\beta\,,\quad \ogamma=\gamma q^2\,,\quad \oN=N-2$
  \begin{align*}
  &  \Phi_i^{0,+}=\frac{(1-q^{N-i-1})(1-q^{N-i})(1-\alpha q^{i+1})(1-\alpha q^{i+2})(1-\beta\gamma q^{i+1})(1-\beta\gamma q^{i+2})}{(1-q^{N-1})(1-q^N)(1-\alpha q)(1-\alpha q^2)(1-\beta \gamma q)(1-\beta \gamma q^2)}
  \\
  & \qquad\quad \times \frac{(1-\alpha\beta q^{i+1})(1-\alpha\beta q^{i+2})}{(1-\alpha\beta q^{2i+1})(1-\alpha\beta q^{2i+2})}\,,
  \\
  &  \Phi_i^{-1,+}=\frac{(1-q^i)(1-q^{N-i})(1-\alpha q^{i+1})(1-\beta\gamma q^{i+1})(\gamma q^{1-i}-\alpha q)(1-\beta q^i)}{(1-q^{N-1})(1-q^N)(1-\alpha q)(1-\alpha q^2)(1-\beta\gamma q)(1-\beta \gamma q^2)}\\
 & \qquad\quad \times \frac{(1-\alpha\beta q^{i+1})(1-\alpha\beta q^{N+i+1})(1+q)}{(1-\alpha\beta q^{2i})(1-\alpha\beta q^{2i+2})}\,,
  \\
    &  \Phi_i^{-2,+}=\frac{q^3(1-q^{i-1})(1-q^i)(\gamma q^{-i}-\alpha)(\gamma q^{1-i}-\alpha)(1-\beta q^{i-1})(1-\beta q^i)}{(1-q^{N-1})(1-q^N)(1-\alpha q)(1-\alpha q^2)(1-\beta\gamma q)(1-\beta \gamma q^2)}
  \\
  & \qquad\quad \times \frac{(1-\alpha\beta q^{N+i+1})(1-\alpha\beta q^{N+i+2})}{(1-\alpha\beta q^{2i})(1-\alpha\beta q^{2i+1})}\,,
  \\
  \end{align*}
and
\begin{align*}
     &  \lambda^-_{x,\rr}= \frac{(1-q^{-x})(1-q^{-x+1})(1-\gamma q^{x-N})(1-\gamma q^{x-N+1})}{(1-\alpha q)(1-\alpha q^2)(1-q^{N-1})(1-q^N)(1-\beta \gamma q)(1-\beta \gamma q^2)} \,,
    \\
     &  \Phi_i^{+2,-}=\frac{q^{2i-2N+2}}{( 1-\alpha \beta q^{2i+3})(1-\alpha \beta q^{2i+4})}\,,
  \\
    &  \Phi_i^{0,-}=\frac{q^{2i-2N+1}}{(1-\alpha \beta q^{2i+2})(1-\alpha \beta q^{2i+3})}
    \,,
    \\
     &  \Phi_i^{+1,-}=-\Phi_i^{+2,-}-\Phi_i^{0,-}\,,
  \\
\end{align*}

      \item[(qRIII/IV')] $\ox=x-1,\quad \oalpha=\alpha q^2\,,\quad \obeta=\beta\,,\quad \ogamma=\gamma q\,,\quad \oN=N-1$
    \begin{align*}
  &  \Phi_i^{0,+}=\frac{(1-q^{N-i})(1-\alpha q^{i+1})(1-\alpha q^{i+2})(1-\alpha \beta q^{i+1})(1-\alpha \beta q^{i+2})(1-\beta\gamma q^{i+1})}{(1-q^N)(1-\alpha q)(1-\alpha q^2)(1-\beta \gamma q)(1-\alpha\beta q^{2i+1})(1-\alpha \beta q^{2i+2})}\,,
  \\
  &  \Phi_i^{-1,+}=\frac{q(1-q^i)(1-\alpha q^{i+1})(1-\beta q^i)(1-\alpha \beta q^{i+1})}{(1-q^N)(1-\alpha q)(1-\alpha q^2)(1-\beta \gamma q)(1-\alpha \beta q^{2i+1})}\\
  &\qquad\quad \times
  \left(
 -\frac{\alpha q(1-q^{N-i})(1-\beta \gamma q^{i+1})}{1-\alpha\beta q^{2i+2}}
 +\frac{(\gamma q^{-i}-\alpha)(1-\alpha\beta q^{N+i+1})}{1-\alpha\beta q^{2i}}\right)\,,
  \\
    &  \Phi_i^{-2,+}=-\frac{\alpha q^3(1-q^{i-1})(1-q^i)(\gamma q^{-i}-\alpha)(1-\beta q^{i-1})(1-\beta q^i)(1-\alpha\beta q^{N+i+1})}{(1-q^N)(1-\alpha q)(1-\alpha q^2)(1-\beta\gamma q)(1-\alpha\beta q^{2i})(1-\alpha\beta q^{2i+1})}\,,
  \end{align*}
and
\begin{align*}
     &  \lambda^-_{x,\rr}=\frac{(1-q^{-x})(1-\gamma q^{x-N})(1-\alpha q^{x+1})(\gamma-\alpha q^{N+1-x})}{\alpha q (1-\alpha q)(1-\alpha q^2)(1-\beta \gamma q)(1-q^N)}\,,
    \\
     &  \Phi_i^{+2,-}=-
     \frac{q^{i-N}(q^N-q^{i+1})(1-\beta\gamma q^{i+2})}{(1-\alpha \beta q^{2i+3})(1-\alpha\beta q^{2i+4})}\,,
  \\
    &  \Phi_i^{0,-}=-
    \frac{q^{i-N-1}(1-\alpha\beta q^{N+i+2})(\gamma-\alpha q^{i+1})}{\alpha (1-\alpha\beta q^{2i+2})(1-\alpha \beta q^{2i+3})}
    \,,
    \\
    &  \Phi_i^{+1,-}=-\Phi_i^{+2,-}-\Phi_i^{0,-}\,,
  \\
\end{align*}

   \item[(qRIV/IV')] $\ox=x,\quad \oalpha=\alpha q^2\,,\quad \obeta=\beta\,,\quad \ogamma=\gamma\,,\quad \oN=N$
     \begin{align*}
  &  \Phi_i^{0,+}=\frac{(1-\alpha q^{i+1})(1-\alpha q^{i+2})(1-\alpha\beta q^{i+1})(1-\alpha \beta q^{i+2})}{(1-\alpha q)(1-\alpha q^2)(1-\alpha\beta q^{2i+1})(1-\alpha\beta q^{2i+2})}\,,\\
     &  \Phi_i^{-2,+}=\frac{\alpha^2 q^3(1-q^{i-1})(1-q^i)(1-\beta q^{i-1})(1-\beta q^i)}{(1-\alpha q)(1-\alpha q^2)(1-\alpha\beta q^{2i})(1-\alpha\beta q^{2i+1})}\,,\\
    & \Phi_i^{-1,+}=1-\Phi_i^{0,+}-\Phi_i^{-2,+}\,,
  \\
 \end{align*}
and
\begin{align*}
     &  \lambda^-_{x,\rr}=\frac{(1-\alpha q^{x+1})(1-\alpha q^{x+2})(\gamma-\alpha q^{N+1-x})(\gamma-\alpha q^{N+2-x})}{\alpha^2 q^3(1-\alpha q)(1-\alpha q^2)}\,,
    \\
     &  \Phi_i^{+2,-}=\frac{(q^N-q^{i})(q^N-q^{i+1})(1-\beta\gamma q^{i+1})(1-\beta\gamma q^{i+2})}{(1-\alpha\beta q^{2i+3})(1-\alpha\beta q^{2i+4})}
    \,,
  \\
    &  \Phi_i^{0,-}=\frac{(1-\alpha\beta q^{N+i+2})(1-\alpha\beta q^{N+i+3})(\gamma-\alpha q^{i+1})(\gamma-\alpha q^{i+2})}{\alpha^2 q^3(1-\alpha\beta q^{2i+2})(1-\alpha\beta q^{2i+3})}
  \,,
    \\
     &  \Phi_i^{+1,-}=\lambda^-_{0,\rr}-\Phi_i^{+2,-}-\Phi_i^{0,-}\,.
  \\
\end{align*}
\end{itemize}
The $B'_2$-contiguity relations for the other types of polynomials can be obtained by the different limits specified at the end of each of the Subsections \ref{sec:qHahn}-\ref{sec:kraw}.

\vspace{1cm}

\textbf{Statements and Declarations}

Competing interests: The authors declare no competing interests.

\bibliographystyle{utphys}
\bibliography{contiguity}

\providecommand{\href}[2]{#2}\begingroup\raggedright\begin{thebibliography}{10}

\bibitem{AskeyWilson}
R.~Askey and J.~Wilson, ``{Some basic hypergeometric orthogonal polynomials
  that generalize Jacobi polynomials},'' {\em Mem. Amer. Math. Soc.} {\bfseries
  54}, 319 (1985).

\bibitem{GasperRahman2004}
G.~Gasper and M.~Rahman, \href{http://dx.doi.org/10.1017/CBO9780511526251}{{\em
  {Basic Hypergeometric Series}}}.
\newblock Encyclopedia of Mathematics and its Applications. Cambridge
  University Press, 2~ed., 2004.

\bibitem{Chiara}
T.~Chihara, {\em {An Introduction to Orthogonal Polynomials}}.
\newblock Gordon and Breach, 1978.

\bibitem{KoekoekLeskyetal2010}
R.~Koekoek, P.~A. Lesky, and R.~F. Swarttouw,
  \href{http://dx.doi.org/10.1007/978-3-642-05014-5}{{\em {Hypergeometric
  Orthogonal Polynomials and Their {$q$}-Analogues}}}.
\newblock Springer Monographs in Mathematics. Springer-Verlag, 2010.

\bibitem{IT06}
P.~Iliev and P.~Terwilliger, ``The rahman polynomials and the lie algebra
  $sl_3(\mathbb{C})$,''
  \href{http://dx.doi.org/10.1090/S0002-9947-2012-05495-X}{{\em Trans. Amer.
  Math. Soc.} {\bfseries 364}, 4225--4238 (2012)},
  \href{http://arxiv.org/abs/1006.5062}{{\ttfamily arXiv:1006.5062}}.

\bibitem{crampe2023FLP}
N.~Crampe and M.~Zaimi, ``{Factorized $A_2$-Leonard pair },''
  \href{http://dx.doi.org/10.1007/s11139-024-01009-y}{{\em Ramanujan J.}
  {\bfseries 66}, 25 (2025)}, \href{http://arxiv.org/abs/2312.08312}{{\ttfamily
  arXiv:2312.08312}}.

\bibitem{crampe2023lambdagriffiths}
N.~Crampe, L.~Frappat, J.~Gaboriaud, E.~Ragoucy, L.~Vinet, and M.~Zaimi,
  ``${{\lambda}}$-{Griffiths} polynomials: {Bispectrality} and
  biorthogonality,'' \href{http://arxiv.org/abs/2311.03256}{{\ttfamily
  arXiv:2311.03256}}.

\bibitem{crampe2023griffithsRacah}
N.~Crampe, L.~Frappat, J.~Gaboriaud, E.~Ragoucy, L.~Vinet, and M.~Zaimi,
  ``{Griffiths polynomials of Racah type},''
  \href{http://dx.doi.org/10.1063/5.0209006}{{\em J. Math. Phys.} {\bfseries
  65}, 083507 (2024)}, \href{http://arxiv.org/abs/2403.12148}{{\ttfamily
  arXiv:2403.12148}}.

\bibitem{crampe2024griffithsqRacah}
N.~Crampe, L.~Frappat, J.~Gaboriaud, and E.~Ragoucy, ``$q$-deformed {Griffiths}
  polynomials of {Racah} type,''
  \href{http://arxiv.org/abs/2407.17016}{{\ttfamily arXiv:2407.17016}}.

\bibitem{MARCELLAN1997}
F.~Marcellán and J.~Petronilho, ``Eigenproblems for tridiagonal 2-toeplitz
  matrices and quadratic polynomial mappings,''
  \href{http://dx.doi.org/https://doi.org/10.1016/S0024-3795(97)80009-2}{{\em
  Linear Algebra and its Applications} {\bfseries 260}, 169--208 (1997)}.

\bibitem{Jafarov2015}
E.~I. Jafarov, A.~M. Jafarova, and J.~V. der Jeugt, ``{The $su(2)$ Krawtchouk
  oscillator model under the $\mathcal{CP}$ deformed symmetry },''
  \href{http://dx.doi.org/10.1088/1742-6596/597/1/012047}{{\em J. Phys.: Conf.
  Ser.} {\bfseries 597}, 012047 (2015)},
  \href{http://arxiv.org/abs/1502.00464}{{\ttfamily arXiv:1502.00464}}.

\bibitem{Oste2015Doubling}
R.~Oste and J.~Van~der Jeugt, ``Doubling (dual) {Hahn} polynomials:
  classification and applications,''
  \href{http://dx.doi.org/10.3842/SIGMA.2016.003}{{\em SIGMA, Symmetry
  Integrability Geom. Methods Appl.} {\bfseries 12}, paper 003, 27 (2016)},
  \href{http://arxiv.org/abs/1507.01821}{{\ttfamily arXiv:1507.01821}}.

\bibitem{Oste2016}
R.~Oste and J.~V. der Jeugt, ``{A finite quantum oscillator model related to
  special sets of Racah polynomials },''
  \href{http://dx.doi.org/10.1134/S1063778817040196}{{\em Phys. Atom. Nuclei}
  {\bfseries 80}, 786--793 (2017)},
  \href{http://arxiv.org/abs/1612.07700}{{\ttfamily arXiv:1612.07700}}.

\bibitem{Vinet2010q1Jacobi}
L.~Vinet and A.~Zhedanov, ``{A limit $q=-1$ for the big $q$-Jacobi
  polynomials},'' \href{http://dx.doi.org/10.1090/S0002-9947-2012-05539-5}{{\em
  Trans. Amer. Math. Soc.} {\bfseries 364}, 5491--5507 (2012)},
  \href{http://arxiv.org/abs/1011.1429}{{\ttfamily arXiv:1011.1429}}.

\bibitem{Stoilova2011}
N.~I. Stoilova and J.~V. der Jeugt, ``{An Exactly Solvable Spin Chain Related
  to Hahn Polynomials },'' \href{http://dx.doi.org/10.3842/SIGMA.2011.033}{{\em
  SIGMA} {\bfseries 7}, 033 (2011)},
  \href{http://arxiv.org/abs/1101.4469}{{\ttfamily arXiv:1101.4469}}.

\bibitem{Jafarov2014}
E.~I. Jafarov, N.~I. Stoilova, and J.~Van~der Jeugt, ``{On a Pair of Difference
  Equations for the $_4F_3$ Type Orthogonal Polynomials and Related
  Exactly-Solvable Quantum Systems},''
  \href{http://dx.doi.org/https://doi.org/10.1007/978-4-431-55285-7_20}{{\em
  Lie Theory and Its Applications in Physics.} 291--299 (2014)}.

\bibitem{Oste2017}
R.~Oste and J.~V. der Jeugt, ``{Tridiagonal test matrices for eigenvalue
  computations: two-parameter extensions of the Clement matrix},''
  \href{http://dx.doi.org/10.1016/j.cam.2016.10.019}{{\em Journal of
  Computational and Applied Mathematics} {\bfseries 314}, 30--39 (2017)},
  \href{http://arxiv.org/abs/1612.07619}{{\ttfamily arXiv:1612.07619}}.

\bibitem{crampe2023Racah4}
N.~Crampe, L.~Frappat, and E.~Ragoucy, ``{Representations of the rank two Racah
  algebra and orthogonal multivariate polynomials},''
  \href{http://dx.doi.org/10.1016/j.laa.2023.01.017}{{\em Linear Algebra and
  Its Applications} {\bfseries 664}, 165--215 (2023)},
  \href{http://arxiv.org/abs/2206.01031}{{\ttfamily arXiv:2206.01031}}.

\bibitem{crampe2023nonbinaryJohnson}
N.~Crampe, L.~Vinet, M.~Zaimi, and X.~Zhang, ``{A bivariate $Q$-polynomial
  structure for the non-binary Johnson scheme},''
  \href{http://dx.doi.org/10.1016/j.jcta.2023.105829}{{\em Journal of
  Combinatorial Theory A} {\bfseries 202}, 105829 (2024)},
  \href{http://arxiv.org/abs/2306.01882}{{\ttfamily arXiv:2306.01882}}.

\bibitem{bernard2024attenuated}
P.-A. Bernard, N.~Crampe, L.~Vinet, M.~Zaimi, and X.~Zhang, ``{Bivariate $P$-
  and $Q$-polynomial structures of the association schemes based on attenuated
  spaces },'' \href{http://dx.doi.org/10.1016/j.disc.2024.114332}{{\em Discrete
  mathematics} {\bfseries 348}, 114332 (2025)},
  \href{http://arxiv.org/abs/2405.04586}{{\ttfamily arXiv:2405.04586}}.

\bibitem{Szego}
G.~Szego, {\em {Orthogonal Polynomials}}.
\newblock AMS, 4~ed., 1975.

\bibitem{Geronimus}
Y.~Geronimus, ``On the polynomials orthogonal with respect to a given number
  sequence,'' {\em Zap. Mat. Otdel. Kharkov. Univers. i Nil Mat. i Mehan.}
  3--18 (1940).

\bibitem{Geronimus2}
Y.~Geronimus, ``On the polynomials orthogonal with respect to a given number
  sequence and a theorem by {W}. {H}ahn,'' {\em lzv. Akad. Nauk SSSR 4.}
  215--228 (1940).

\bibitem{Zhedanov-dToda}
V.~Spiridonov and A.~Zhedanov, ``Discrete {Darboux} transformations, the
  discrete-time {Toda} lattice, and the {Askey}-{Wilson} polynomials,''
  \href{http://dx.doi.org/10.4310/MAA.1995.v2.n4.a1}{{\em Methods Appl. Anal.}
  {\bfseries 2}, 369--398 (1995)}.

\bibitem{Satoshi-dToda}
S.~Tsujimoto, ``Determinant solutions of the nonautonomous discrete {Toda}
  equation associated with the deautonomized discrete {KP} hierarchy,''
  \href{http://dx.doi.org/10.1007/s11424-010-9279-y}{{\em J. Syst. Sci.
  Complex.} {\bfseries 23}, 153--176 (2010)}.

\bibitem{BI84}
E.~Bannai and T.~Ito, {\em {Algebraic combinatorics I}}.
\newblock Benjamin-Cummings, 1984.

\bibitem{VinetCBI}
S.~Tsujimoto, L.~Vinet, and A.~Zhedanov, ``{Dunkl shift operators and
  Bannai–Ito polynomials },''
  \href{http://dx.doi.org/10.1016/j.aim.2011.12.020}{{\em Adv. Math.}
  {\bfseries 229}, 2123--2158 (2012)},
  \href{http://arxiv.org/abs/1106.3512}{{\ttfamily arXiv:1106.3512}}.

\bibitem{Vinet2013}
V.~X. Genest, L.~Vinet, and A.~Zhedanov, ``{Bispectrality of the Complementary
  Bannai-Ito Polynomials },''
  \href{http://dx.doi.org/10.3842/SIGMA.2013.018}{{\em SIGMA} {\bfseries 9},
  018 (2013)}, \href{http://arxiv.org/abs/1211.2461}{{\ttfamily
  arXiv:1211.2461}}.

\bibitem{zhedanov-spectral-transf}
A.~Zhedanov, ``Rational spectral transformations and orthogonal polynomials,''
  \href{http://dx.doi.org/https://doi.org/10.1016/S0377-0427(97)00130-1}{{\em
  Journal of Computational and Applied Mathematics} {\bfseries 85}, 67--86
  (1997)}.

\end{thebibliography}\endgroup

\end{document}